\newtheorem{definition}{Definition}
\newtheorem{lemma}{Lemma}
\newtheorem{theorem}{Theorem}
\newtheorem{corollary}{Corollary}
\newtheorem{remark}{Remark}
\newtheorem{assumption}{Assumption}
\begin{document}
\title{Distributed online generalized Nash Equilibrium learning in multi-cluster games: A delay-tolerant algorithm }

\author{Bingqian Liu, Guanghui Wen, \IEEEmembership{Senior Member, IEEE}, Xiao Fang, Tingwen Huang, \IEEEmembership{Fellow, IEEE}, and Guanrong Chen, \IEEEmembership{Life Fellow, IEEE}

    \thanks{This work was supported by the National Key Research and Development
Program of China under Grant No. 2022YFA1004702.}
    \thanks{B. Liu, G. Wen, and X. Fang are with the Laboratory of Security Operation and
Control for Intelligent Autonomous Systems, Department of Systems
Science, School of Mathematics, Southeast University, Nanjing 211189,
P. R. China (e-mails: bqliu@seu.edu.cn; ghwen@seu.edu.cn; fangxiao@seu.edu.cn).}
	\thanks{T. Huang is with the Science Program, Texas A\&M University at Qatar, Doha, Qatar (e-mail: tingwen.huang@qatar.tamu.edu).}
	\thanks{G. Chen is with the Department of Electrical Engineering, City University of Hong Kong, Hong Kong SAR, China (e-mail: eegchen@cityu.edu.hk).}
}

\maketitle
\begin{abstract}
This paper addresses the problem of distributed online generalized Nash equilibrium (GNE) learning for multi-cluster games with delayed feedback information. Specifically, each agent in the game is assumed to be informed a sequence of local cost functions and constraint functions, which are known to the agent with time-varying delays subsequent to decision-making at each round. The objective of each agent within a cluster is to collaboratively  optimize the cluster's cost function, subject to time-varying coupled inequality constraints and local feasible set constraints over time. Additionally, it is assumed that each agent is required to estimate the decisions of all other agents through interactions with its neighbors, rather than directly accessing the decisions of all agents, i.e., each agent needs to make decision under partial-decision information. To solve such a challenging problem, a novel distributed online delay-tolerant GNE learning algorithm is developed based upon the primal-dual algorithm with an aggregation gradient mechanism. The system-wise regret and the constraint violation are formulated to measure the performance of the algorithm, demonstrating sublinear growth with respect to time under certain conditions. Finally, numerical results are presented to verify the effectiveness of the proposed algorithm.
\end{abstract}

\begin{IEEEkeywords}
Distributed optimization, online multi-cluster game, generalized Nash equilibrium, time-varying delay.
\end{IEEEkeywords}
\section{Introduction}

In recent years, multi-cluster games have attracted considerable attention from various scientific communities, owing partly to its wide applications in economic markets \cite{Zeng_2019}, transportation networks \cite{Shehory_1998}, healthcare networks \cite{Peng_2009}, among many others. In such games, the agents are grouped  into clusters to facilitate practical applications where each cluster may have its own set of objectives, constraints, and interactions. One challenging issue in addressing multi-cluster games involves designing an algorithm that enables the actions of agents to converge to desired actions, meanwhile mitigating possible  incentive for unilateral deviation among agents \cite{Zhou_2023}. 

Though many interesting results have been established along this line, most focus on the offline scenario, where the local cost functions and constraint functions of agents are time-invariant. However, in practice, the cost function of each agent always evolves over time and the agents do not have prior knowledge of its cost function at current time \cite{Shahrampour_2018,Yi_2020,Yi_2021,Lu_2021,Meng_2021}. In fact, even if not focusing on distributed multi-cluster games, within the broader research field of distributed games, there is significantly less work on online games compared to offline games. Nevertheless, a kind of distributed online GNE learning algorithm is presented in \cite{Lu_2021} for online games subject to time-invariant constraints. Moreover, a class of online mirror descent methods is developed in \cite{Meng_2021} to ensure that dynamic regret
and constraint violation grow sublinearly by choosing appropriate decreasing stepsizes. 

The above-mentioned results have significantly contributed to our understanding of resolving distributed online game problems under various scenarios. However, it is worth mentioning that results in \cite{Lu_2021} and \cite{Meng_2021} are derived under a delay-free condition, i.e., the information of cost functions and constraint functions can be instantaneously obtained by each agent, without any delay. Yet, in practice, feedback information often is revealed to agents with delays subsequent to decision-making at each round in an online game \cite{Hsieh_2021}, \cite{Cao_2021}. Partly motivated by this observation, a distributed online NE learning algorithm is constructed in \cite{Meng_2022}, which take mirror descent and one-point time-invariant delayed bandit feedback approaches. In \cite{Liu_2023}, a distributed learning algorithm is proposed for online aggregative games with delayed feedback information in dynamic environments. The relationships between agents in all the above online decision-making studies are either competitive or cooperative. However, in the context of multi-cluster multi-agent systems, agents within a cluster typically manifest cooperative relationships, whereas those situated in distinct clusters tend to adopt competitive behaviors \cite{Zhou_2022}, \cite{Yu_2023}. Recently, an online delay-free multi-cluster game framework is established in \cite{Yu_2023} for multi-cluster multi-agent systems in the presence of coupled constraints. To the best of our knowledge, there is no report on the distributed online multi-cluster game problem with delayed feedback information.

Motivated by the above discussions, this paper aims to solve the online GNE learning problem for multi-cluster games with  delayed feedback information. Compared to existing online algorithms \cite{Shahrampour_2018,Yi_2020,Yi_2021,Lu_2021,Meng_2021,Zhou_2022,Yu_2023}, where the cost functions and constraints are available immediately to agents after decision-making, this paper incorporates time-varying delays in the availability of the cost functions and constraints to agents following the decision-making process. A noteworthy feature is that feedback delays are time-varying and heterogeneous for agents, which is more realistic but also more challenging. 

Specifically, a general framework that incorporates multi-cluster games in the online setting subject to feedback delays is firstly established. Then, a distributed GNE learning algorithm that combines primal-dual algorithm with aggregation gradient mechanism is designed to deal with the effect of time-varying delays on NE learning using partial-decision information, where the assumption made in some literature that each agent has full-decision information is removed. In the partial-decision information setting, agents are required to estimating the decisions of all competitors through interactions with neighboring agents, rather than having direct access to the decisions of all agents. This framework facilitates distributed decision-making, making it particularly favorable in practical scenarios in the absence of a centralized coordinator \cite{Ye_2017}. 

The main contributions of this paper are summarized as follows. In comparison with the existing algorithms developed for distributed online GNE learning, the algorithm presented in this study demonstrates robustness to time-varying delays in accessing the feedback information of cost functions and constraint functions. To mitigate the effect of time-varying delays, a collection of feedback timestamps is formed for each agent to record the feedback information appearing at each time step and an aggregation gradient mechanism is established for  ascertaining a credible descent direction. Furthermore, the system-wise regret and the constraint violation are formulated  to measure the performance of the proposed algorithm, demonstrating sublinear growth with respect to time under some weak conditions. To the best of our knowledge, this paper represents the first attempt to address the problem of distributed online GNE learning for multi-cluster games in the presence of delayed information feedback. 

The rest of the paper is organized as follows. Section \ref{sec.one} presents the formulation of the online multi-cluster game and imposes some assumptions. In Section \ref{sec.two}, the distributed online delay-tolerant GNE learning algorithm is designed. Section \ref{sec.three} includes the main results on the system-wise regret and the constraint violation. To demonstrate the obtained results, numerical simulations are performed in Section \ref{sec.four}. Finally, section \ref{sec.five} concludes the paper.

\textbf{Notation}.  Let $\mathbb{R}$, $\mathbb{R}^n$ and $\mathbb{R}^{m \times n}$ denote the sets of real numbers, $n$-dimensional real vectors and $m \times n$-dimensional real matrices, respectively. $\mathbb R_{\geq 0}^{m}$ represents the set of $m$-dimensional nonnegative real vectors. $\mathbb{N}_+$ denotes the set of positive integers. For a constant $N>1$, $[N]$ denotes the set $\left\{1,\cdots,N\right\}$. For column vectors $x_{1},\cdots,x_{N}$, $col(x_{k})_{k\in[N]}\triangleq col(x_{1},\cdots,x_{N})=\left[x_{1}^\top,\cdots,x_{N}^\top\right]^\top$. For matrices $A_{1},\cdots,A_{N}$, $diag(A_{k})_{k \in [N]}\triangleq diag(A_{1},\cdots,A_{N})$ represents the diagonal matrix in which the diagonal is composed of $A_{1},\cdots,A_{N}$ sequentially. For a given vector $x \in \mathbb{R}^n$, $\left\|  x \right\|$ represents the standard Euclidean norm of $x$, and $[x]_{k}$ denotes the $k$th element of $x$ and $[x]_{+}\triangleq col(max\left\{0,[x]_{k}\right\})_{k \in [n]}$. 
For a closed convex set $\chi$, ${{\mathcal P}_{\chi}}\left(x\right)$ is the Euclidean projection of the vector $x$ onto $\chi$ and $N_{\chi}(x)=\left\{v|\left<v,y-x\right>\leq 0, \forall y \in \chi \right\}$ is the normal cone to $\chi$ at $x$. For matrices $A$ and $B$, $A \otimes B$ is their Kronecker product. The element on the $i$th row and the $j$th column of $A$ is denoted by ${\left[ A \right]}_{ij}$, $A^{T}$ is the transpose of $A$, $\lambda(A)$ is an eigenvalue and $\lambda_{max}(A)$ is the maximum eigenvalue of matrix $A$. $I_n$ is the $n$-dimensional identity matrix, $\bm{1}_n$ denotes the $n$-dimensional column vector with all entries equal to $1$ and $\bm{0}_n$ denotes the $n$-dimensional column vector with all elements $0$. For sets $S_{1}$ and $S_{2}$, $\left|S_{1}\right|$ denotes the cardinality of $S_{1}$ and $ S_{2}\setminus S_{1}=\left\{x\in S_{2}|x \notin S_{1}\right\}$ represents the set of elements that are in $S_{2}$ but not in $S_{1}$. Given two sequences $\{{{a}_{t}}\}_{t=1}^{\infty }$ and $\{{{b}_{t}}\}_{t=1}^{\infty }$, the notation ${{a}_{t}}=\mathcal{O}({{b}_{t}})$ means that there exists a time instant $t_{0}$ and a positive constant $C$ such that $\left| {{a}_{t}} \right| \le C \left| {{b}_{t}} \right|$ for $t \ge t_{0}$. Given two real numbers $a$ and $b$, $mod(a,b)$ is equal to the remainder of $a$ divided by $b$.
\section{Problem Formulation}\label{sec.one}

Consider an online multi-cluster game 
\begin{equation*}
\Gamma\triangleq\left\{\mathcal{V},f^{t},g^{t},\bm\Omega\right\},
\end{equation*}
where $\mathcal{V}\triangleq[N]$ with $N$ denoting the number of clusters, $f^{t}\triangleq \left\{{f}^{1,t}_{1}, \cdots,{f}^{1,t}_{n_{1}}, \cdots,{f}^{N,t}_{1},\cdots,{f}^{N,t}_{n_{N}}\right\}$ with $f_j^{i,t}$ denoting the local cost function of agent $j$ in cluster $i$ at time $t$, $g^{t}\triangleq \left\{{g}^{1,t}_{1}, \cdots,{g}^{1,t}_{n_{1}}, \cdots,{g}^{N,t}_{1},\cdots,{g}^{N,t}_{n_{N}}\right\}$ with $g_{j}^{i,t}$ denoting the constraint function of agent $j$ in cluster $i$ at time $t$ and $\bm\Omega\triangleq\bm\Omega_{1}\times \cdots \times\bm \Omega_{N}$ with $\bm\Omega_{i}\triangleq \bigg\{\prod \limits_{j=1}^{n_{i}}A_{j}|A_{j}=\Omega_{i}, \forall j \in [n_{i}]\bigg\}$ denoting the local constraint set of cluster $i$.

At each round $t \in \left[T\right]$, the local cost function ${f}^{i,t}_{j}$ and constraint function $g^{i,t}_{j}$ are presented to agent $j$ in cluster $i$ after decision $x_{ij,t}$ is made in its own local constraint set ${\Omega }_{i}$, where $T$ is the time horizon. The aforementioned sequential decision-making process can be viewed as the main feature of online learning, which is totally different from the offline version. In the offline setting, the information associated with local cost functions and constraint functions is time-invariant and available at each iteration. 

In this paper, consider $\Omega_{i} \subseteq \mathbb R$ for simplicity and $n\triangleq\sum_{i=1}^N n_i$ with the number of agents contained in cluster $i$ being denoted as $n_i$. The agents in cluster $i$ collaborately minimize cost function $f^{i,t}:\mathbb R^{n} \to \mathbb R$, which is the sum of their local cost functions $f^{i,t}_{j}$ at time $t$. Thus, the online multi-cluster game can be formulated as the following noncooperative game problem: for all $i\in\mathcal{V}$,
\begin{equation}\label{eq.game}
\begin{aligned}
&\min_{{x}_{i,t}}~f^{i,t}\left(x_{i,t},x_{-i,t}\right)\triangleq\sum_{j=1}^{n_i}f_{j}^{i,t}\left(x_{ij,t},x_{-i,t}\right),
\\
&\textit{s.t.}\quad {x}_{i,t} \in \chi^{i,t}\left(x_{-i,t}\right),
\end{aligned}
\end{equation}
where $x_{i,t}=col\left(x_{i1,t},\cdots ,x_{in_{i},t}\right)\in \mathbb R^{n_{i}}$ is the decision variable of cluster $i$, $x_{t}=col\left(x_{1,t},\cdots,x_{N,t}\right)\in \mathbb R^{n}$ collects the decisions of all the agents, $x_{-i,t}=col\left(x_{1,t},\cdots,x_{i-1,t},x_{i+1,t},\cdots,x_{N,t}\right)$ denotes the decision variables of all clusters except for cluster $i$, and $\chi^{i,t}(x_{-i,t})$ is the feasible set of cluster $i$ at time $t$ denoted as
\begin{equation*}
\begin{aligned}
 \chi^{i,t}\left(x_{-i,t}\right)=\bigg\{&x_{i,t} \in \mathbb R^{n_{i}}\bigg|x_{ij,t}\in \Omega_{i}, \sum_{i=1}^{N}\sum_{j=1}^{n_{i}}g^{i,t}_{j}\left(x_{ij,t}\right)\leq \bm 0_{m},
\\&x_{ij,t}=x_{ik,t},\forall j, k \in \left[n_{i}\right] \bigg\},
\end{aligned}
\end{equation*}
where the vector-valued functions $g^{i,t}_{j}:\mathbb R \to \mathbb R^{m}$ imply that each agent is subject to $m$ coupled constraints. 

In this sense, the decisions of the agents in the same cluster must reach consensus. Furthermore, the strategy of each agent depends on other agents' strategies due to the existence of the coupled constraint functions and the mechanisms of the noncooperative games.
\begin{remark}
\rm{The constraint function considered in this paper is more general than those discussed in \cite{Zeng_2019},\cite{Yue_2022}, where the constraint functions are only related to the leader in each cluster or even not coupled.}
\end{remark}

Here, the goal is to design an online algorithm to learn the time-varying optimal solution of problem (\ref{eq.game}), which is known as GNE. For convenience, let $x^{*}_{t}=col(\bm 1_{n_{1}}\otimes x_{1,t}^{*},\cdots, \bm 1_{n_{N}} \otimes x_{N,t}^{*})$ and $x^{*}_{-i,t}=col(\bm 1_{n_{1}} \otimes x_{1,t}^{*},\cdots,\bm 1_{n_{i-1}} \otimes x_{i-1,t}^{*}, \bm 1_{n_{i+1}} \otimes x_{i+1,t}^{*},\cdots, \bm 1_{n_{N}} \otimes x_{N,t}^{*})$.

\begin{definition}
\rm{\textbf{(GNE)} A vector $x^{*}_{t}\in \chi^{t}$ is said to be a GNE of the online multi-cluster game at time $t$ if and only if the following inequality holds:
\begin{equation*}
f^{i,t}(\bm 1_{n_{i}} \otimes x_{i,t}^{*},x_{-i,t}^*){\leq}f^{i,t}({x}_{i,t},{x}_{-i,t}^*),~\forall x_{i,t}\in \chi^{i,t}(x_{-i,t}^{*}),
\end{equation*}
where $\chi^{t}=\big\{x_{t} \in \mathbb R^{n}\big|x_{ij,t}\in \Omega_{i}, \sum_{i=1}^{N}\sum_{j=1}^{n_{i}}g^{i,t}_{j}\left(x_{ij,t}\right)\leq \bm0_{m},x_{ij,t}=x_{ik,t}, \forall i \in \mathcal V, \forall j, k \in \left[n_{i}\right] \big\}$.}
\end{definition}

To guarantee the existence and uniqueness of GNE at each round, the following common assumptions are imposed on the cost functions and constraint functions.

\begin{assumption}\label{assp.one}
\rm{\textbf{(Convexity and Differentiability)} The cost functions $f_{j}^{i,t}\left(x_{ij,t},x_{-i,t}\right)$ are convex and differentiable with respect to $x_{ij,t}$ for $x_{-i,t}\in \mathbb R^{n-n_{i}}$, $g^{i,t}_{j}\left(x_{ij,t}\right)$ are convex and differentiable for $x_{ij,t}\in \mathbb R$. Moreover, the non-empty sets $\Omega_{i}$ are convex, compact and Slater's constraint qualification is satisfied.} 
\end{assumption}

Some bounded properties can be gained from Assumption \ref{assp.one}. Specifically, there exist constants $K>0$ and $G>0$ such that, for any $x_{ij,t}\in \Omega_{i}$, $i\in\mathcal V$, $j\in \left[n_{i}\right]$,  $t\in\left[T\right]$, 
\begin{align*}
\| f^{i,t}_{j}(x_{ij,t},x_{-i,t})\| \leq K,& \quad \|g^{i,t}_{j}(x_{ij,t})\| \leq K,
\\\| \nabla_{x_{ij,t}}f^{i,t}_{j}(x_{ij,t},x_{-i,t})\| \leq G,& \quad \| \nabla g^{i,t}_{j}(x_{ij,t})\| \leq G.
\end{align*}

Furthermore, there exists a constant $R>0$ such that $\left\|x\right\|\leq R$ holds for all $x \in \Omega_{i}$ due to the compactness of $\Omega_i$.

For problem (\ref{eq.game}), define the time-varying Lagrangian function $\mathcal L^{i,t} :\mathbb R^{n} \times \mathbb R^{n_i} \times \mathbb R^{m} \to \mathbb R$ for each cluster $i$ with multipliers $\lambda_{i,t} \in \mathbb R^{n_i}$, $\mu_{i,t} \in \mathbb R^{m}$, as
\begin{equation}\label{eq.Lagrangian}
\begin{aligned}
\mathcal L^{i,t}\left(x_{t},\lambda_{i,t},\mu_{i,t}\right)=&f^{i,t}\left(x_{i,t},x_{-i,t}\right)+\left(\lambda_{i,t}\right)^\top L_{i}x_{i,t}
\\&+\left(\mu_{i,t}\right)^\top \sum_{i=1}^{N}\sum_{j=1}^{n_{i}}g^{i,t}_{j}\left(x_{ij,t}\right)+\mathbb{I}_{\bm\Omega_{i}}\left(x_{i,t}\right),
\end{aligned}
\end{equation}
where $\mathbb{I}_{\bm\Omega_{i}}$ is the indicator function on the local constraint set $\bm\Omega_{i}$, and $L_{i}$ represents the Laplacian matrix associated with graph $\mathcal G_{i}$ of cluster $i$, which will be explained in detail later.

Under Assumption \ref{assp.one}, $x^{*}_{t}$ is a GNE of the online multi-cluster game at time $t$ if and only if there exist optimal multipliers $\lambda^{*}_{i,t} \in \mathbb R^{n_i}$, $\mu^{*}_{i,t} \in \mathbb R_{\geq 0}^{m}$ such that the following Karush-Kuhn-Tucher (KKT) condition is satisfied for all cluster $i$, namely, 
\begin{equation}\label{eq.KKT}
\begin{cases}
\begin{aligned}
\bm 0_{n_i}\in&\nabla_{x_{i,t}}f^{i,t}\left(\bm 1_{n_{i}}\otimes x_{i,t}^{*},x_{-i,t}^{*}\right)+N_{\bm\Omega_{i}}\left(\bm1_{n_{i}}\otimes x_{i,t}^{*}\right)
\\&+\left(L_{i}\right)^\top\lambda^{*}_{i,t}+\nabla g^{i,t}(x^{*}_{i,t})\mu^{*}_{i,t},
\end{aligned}
\\L_{i}(\bm1_{n_{i}}\otimes x_{i,t}^{*})=\bm 0_{n_{i}},
\\\bm 0_{m}\in N_{\mathbb R_{+}^{m}}\left(\mu^{*}_{i,t}\right)-\sum_{i=1}^{N}\sum_{j=1}^{n_{i}}g^{i,t}_{j}\left(x_{i,t}^{*}\right),
\end{cases}
\end{equation}
where $\nabla g^{i,t}\left(x^{*}_{i,t}\right)\triangleq col((\nabla g^{i,t}_{j}(x^{*}_{i,t}))^{\top})_{j\in[n_{i}]}\in \mathbb R^{n_{i}\times m}$.

The first inclusion and the second equality represent the \textit{stationarity conditions}: 
\begin{align*}
0_{n_i}\in\nabla_{x_{i}}\mathcal L^{i,t}(x^{*}_{t},\lambda^{*}_{i,t},\mu^{*}_{i,t}), \bm 0_{n_i}=\nabla_{\lambda_{i}}\mathcal L^{i,t}(x^{*}_{t},\lambda^{*}_{i,t},\mu^{*}_{i,t}),
\end{align*}
respectively, while the last inclusion implies three conditions: $\mu^{*}_{i,t}\geq \bm0_{m}$, $\sum_{i=1}^{N}\sum_{j=1}^{n_{i}}g^{i,t}_{j}(x_{i,t}^{*})\leq \bm0_{m}$ and the \textit{complementary slackness condition} $(\mu^{*}_{i,t})^{\top}(\sum_{i=1}^{N}\sum_{j=1}^{n_{i}}g^{i,t}_{j}(x_{i,t}^{*}))=0$.

Note that the optimal multipliers $\mu^{*}_{1,t},\cdots,\mu^{*}_{N,t}$ can differ across clusters and the GNEs are not unique. However, some GNEs are less meaningful in practice. If the said GNE is sought, then the result will deviate far from the expected route. Therefore, consider a special type of those, called variational GNE (vGNE). Define the \emph{pseudo-gradient mapping} as
\begin{equation*}
F_{t}(x_{t})=col(\nabla_{x_{i,t}}f^{i,t}(x_{i,t},x_{-i,t}))_{i \in \mathcal V},
\end{equation*}
with which the definition of vGNE is given as follows.

\begin{definition}
\rm{\textbf{(vGNE)} A vector $x^{*}_{t}\in \chi^{t}$ is said to be a vGNE at time $t$ if and only if $x^{*}_{t}$ satisfies the following variational inequality:}
\begin{equation}\label{eq.VI}
F_{t}\left(x^{*}_{t}\right)^{\top}\left(x_{t}-x^{*}_{t}\right)\geq 0,~\forall x_{t} \in \chi^{t} \subseteq \mathbb R^{n}.
\end{equation}
\end{definition}

In terms of the KKT condition of the variational inequality \cite{Auslender_2000}, the Lagrangian multipliers $\mu^{*}_{i,t}$ in the KKT condition related to the vGNE are identical for $i \in \mathcal V$. Since the cost functions $f^{i,t}(x_{i,t},x_{-i,t})$ are convex and differentiable with respect to $x_{i,t}$ according to Assumption \ref{assp.one}, it follows from \cite{Yi_2019} that $x^{*}_{t}$ is the vGNE of the online multi-cluster game problem in (\ref{eq.game}) at time $t$ if and only if there exist $\lambda^{*}_{t} \in \mathbb R^{n}$ and $\mu^{*}_{t}\in \mathbb R^{m}_{\geq 0}$ such that 
\begin{equation}\label{eq.KKT_2}
\begin{cases}
\begin{aligned}
\bm 0_{n}\in F_{t}(x^{*}_{t})+ N_{\bm\Omega}(x^{*}_{t})+\widehat L\lambda^{*}_{t} +G_{t}(x^{*}_{t})\mu^{*}_{t},
\end{aligned}
\\ \widehat Lx^{*}_{t}=\bm0_{n},
\\\bm0_{m}\in N_{\mathbb R_{+}^{m}}(\mu^{*}_{t})-\sum_{i=1}^{N}\sum_{j=1}^{n_{i}}g^{i,t}_{j}(x_{i,t}^{*}),
\end{cases}
\end{equation}
where $N_{\bm\Omega}\left(x^{*}_{t}\right)=N_{\bm\Omega_{1}}(\bm 1_{n_{1}}\otimes x_{1,t}^{*})\times \cdots \times N_{\bm\Omega_{N}}\left(\bm 1_{n_{N}}\otimes x_{N,t}^{*}\right)$, $\widehat L=diag\left(L_{1}, \cdots, L_{N}\right)$, $\lambda^{*}_{t}=col\left(\lambda^{*}_{1,t},\cdots,\lambda^{*}_{N,t}\right)$ and $G_{t}\left(x^{*}_{t}\right)$ is defined as
\begin{align*}
G_{t}\left(x^{*}_{t}\right)\triangleq col\left(\nabla g^{i,t}(x^{*}_{i,t})\right)_{i \in \mathcal V}\in \mathbb R^{n \times m}.
\end{align*}

Under Assumption \ref{assp.two}, the variational inequality (\ref{eq.VI}) has only one solution, which can be proved by contradiction. In other words, there exists a unique vGNE at each round $t\in[T]$, see \cite{Tatarenko_2021} for detailed analysis.

\begin{assumption}\label{assp.two}
\rm{\textbf{(Strongly monotonicity)} The pseudo-gradient mapping $F_{t}(x_{t})$ is $\sigma$-strongly monotone on the set $\bm\Omega$ for $t \in \left[T\right]$, i.e.,
\begin{equation}\label{eq.a2}
(F_{t}(x_{t})-F_{t}(y_{t}))^\top(x_{t}-y_{t})\ge \sigma {\left\|x_{t}-y_{t}\right\|}^{2},~\forall x_{t},y_{t} \in \bm\Omega.
\end{equation}
}
\end{assumption}

To ensure a better performance of the online GNE learning algorithm, assume that the local cost functions $f^{i,t}_{j}(x_{ij,t},x_{-i,t})$ are $l$-smooth with respect to $x_{ij,t}$.

\begin{assumption}\label{assp.three}
\rm{\textbf{($l$-Smoothness)} For $\widetilde{x}_{t}=(x_{i,t},x_{-i,t})$ and $\widetilde{y}_{t}=(y_{i,t},y_{-i,t})$, $\nabla_{x_{ij,t}}f^{i,t}_{j}(x_{ij,t},x_{-i,t})$ are $l$-Lipschitz continuous, i.e.,
\begin{equation*}
\left\| {\nabla_{x_{ij,t}}f^{i,t}_{j}(x_{ij,t},x_{-i,t})}-{\nabla_{x_{ij,t}}f^{i,t}_{j}(y_{ij,t},y_{-i,t})} \right\| \leq l \left\| \widetilde{x}_{t}-\widetilde{y}_{t} \right\|,
\end{equation*}
where $x_{ij,t},y_{ij,t}\in  \Omega_{i}$ and $x_{-i,t},y_{-i,t}\in \mathbb R^{n-n_{i}}$.}
\end{assumption}

Obviously, the accumulated error measured by the differences of local cost functions at the actual decisions $x_{ij,t}$ and the ideal decisions $x_{i,t}^*$ are non-decreasing. For these reasons, the regret of agent $j$ in cluster $i$ to measure the quality of the online algorithm is defined as
\begin{equation*}
\mathcal{R}_{ij}\left(T\right)=\sum_{t=1}^{T}\sum_{k=1}^{n_{i}}f^{i,t}_{k}\left(x_{ij,t},x_{-i,t}^{*}\right)-\sum_{t=1}^{T}\sum_{k=1}^{n_{i}}f^{i,t}_{k}\left(x_{i,t}^{*},x_{-i,t}^{*}\right).
\end{equation*}

Similarly, the system-wise regret is defined as
\begin{equation*}
\mathcal{R}\left(T\right)=\sum_{t=1}^{T}\sum_{i=1}^{N}\sum_{j=1}^{n_{i}}\left(f^{i,t}_{j}\left(x_{ij,t},x_{-i,t}^{*}\right)-f^{i,t}_{j}\left(x_{i,t}^{*},x_{-i,t}^{*}\right)\right).
\end{equation*}

In order to measure the degree that $x_{ij,t}$ violate the constraints, the constraint violation is introduced as
\begin{equation*}
\mathcal CV(T)=\left\|\sum_{t=1}^{T}\sum_{i=1}^{N}\sum_{j=1}^{n_{i}}\left[g^{i,t}_{j}(x_{ij,t})\right]_{+}\right\|.
\end{equation*}

\begin{remark}
\rm{The definition of the constraint violation is tighter than those in \cite{Meng_2022,Liu_2023}, and the compensations from the feasible decisions are not permitted owing to the nonnegativeness of $[g^{i,t}_{j}(x_{ij,t})]_{+}$.}
\end{remark}

An online GNE learning algorithm will be designed below to ensure that the system-wise regret and the constraint violation can grow sublinearly, i.e., 
\begin{equation*}
\lim_{T\to \infty}\frac{\mathcal{R}(T)}{T}=0,\quad \lim_{T\to \infty}\frac{\mathcal{CV}(T)}{T}=0. 
\end{equation*}

It means that, for any $\varepsilon>0$, there exists a time instant $T_{0}>1$, such that $\left|\frac{\mathcal R(T)}{T}\right|<\varepsilon$ for $T >T_{0}$.
Equivalently, there exists a constant $c\in \left[0,1\right)$ such that
\begin{equation*}
\mathcal{R}(T)=\mathcal{O}(T^{c}),\quad \mathcal{CV}(T)=\mathcal{O}(T^{c}).
\end{equation*}

Furthermore, each agent interacts with its neighbors over the communication graph to exchange information at each round such that the system-wise regret and the constraint violation stay within a reliable bound. In this paper, assume that all agents in cluster $i$ are connected by an undirected connected graph $\mathcal G_{i}=(\left[n_{i}\right],\mathcal E_{i})$ with $\mathcal E_{i}\subseteq [n_{i}]\times [n_{i}]$ denoting the set of edges. Let each cluster be regarded as a whole, and all these agents deliver information through an undirected connected graph $\mathcal G_{0}=\left(\mathcal V,\mathcal E_{0}\right)$ with $\mathcal E_{0} \subseteq \mathcal V \times \mathcal V$ denoting the set of edges. Thus, if agent $m$ in cluster $i$ can get information from agent $n$ in cluster $j$, then $(i,j)\in \mathcal E_{0}$. Meanwhile, the graph $\mathcal G=\left(\left[n\right],\mathcal E\right)$ represents the communication among all agents, where $\mathcal E \subseteq [n]\times[n]$. The following assumptions associated with graphs $\mathcal G_{i}$ and $\mathcal G_{0}$ are imposed.

\begin{assumption}\label{assp.four}
\rm{\textbf{(Connectivity)} For each $i\in \left\{0,1,\cdots,N\right\}$, the graph $\mathcal G_{i}$ is undirected and connected.}
\end{assumption}

Based on the structure of graph $\mathcal G_{i}$, a mixing matrix $W_{i}$ is constructed for $i \in \mathcal V$. Also, the mixing matrix $ W$ related to the graph $\mathcal G$ is defined similarly.
\begin{assumption}\label{assp.five}\rm{\textbf{(Doubly stochasticity)} The mixing matrices $ W_{i}$ and $W$ satisfy the following conditions.
\begin{enumerate}
\item The mixing matrices $W_{i}$ and $ W$ are doubly stochastic, respectively, i.e.,
\begin{align*}
W_{i}\bm 1_{n_{i}}=\bm1_{n_{i}}, &\quad \quad \quad \bm 1_{n_{i}}^\top W_{i}=\bm 1_{n_{i}}^\top,
\\ W\bm 1_{n}=\bm 1_{n}, &\quad \quad \quad \bm 1_{n}^\top W=\bm 1_{n}^\top.
\end{align*}
\item For each $i \in \mathcal V$, there exists a scalar $a>0$ such that $\left[ W_{i}\right]_{jj}\geq a$ for all $j \in [n_{i}]$, $\left[ W_{i}\right]_{jk}\geq a$ if $(k,j)\in \mathcal E_{i}$ and $\left[  W_{i}\right]_{jk}=0$ otherwise.
\item There exists a scalar $a>0$ such that $\left[  W\right]_{ii}\geq a$ for all $i \in \mathcal [n]$, $\left[  W\right]_{jk}\geq a$ if $(k,j)\in \mathcal E$ and $\left[  W\right]_{jk}=0$ otherwise.
\end{enumerate}}
\end{assumption}


\begin{remark}
\rm{The graph $\mathcal G$ is also undirected and connected under Assumption \ref{assp.four}. Unlike the inter-cluster graph defined in \cite{Yue_2022}, in which communication is limited to leaders only, here the agents from different clusters all have the opportunity to communicate with each other.}
\end{remark}

\begin{remark}
\rm{The online multi-cluster game (\ref{eq.game}) can be viewed as an extension of online optimization, online game and their offline counterparts. Specifically, if there is only one agent in each cluster and the cost function of each agent is time-invariant (or time-varying), it will reduce to (online) noncooperative game problems. In particular, if there exists only one cluster and the cost function of each agent is time-invariant (or time-varying), it reduces to a standard (online) distributed optimization problem where a consensual condition is required. This is distinct from other existing studies \cite{Yu_2023}.}
\end{remark}

\section{Algorithm Development}\label{sec.two}

In this section, a distributed online delay-tolerant GNE learning algorithm is designed in the presence of zero-order oracle delayed feedback information, that is, the cost functions $f^{i,t}_{j}(x_{ij,t},x_{-i,t})$ and constraint functions $g^{i,t}_{j}(x_{ij,t})$ are revealed to agent $j$ in cluster $i$ at time $t+\tau_{ij,t}$ after $x_{ij,t}$ is determined. Here, $\tau_{ij,t}$ is the feedback delay of agent $j$ in cluster $i$ at time $t$. Note that the delays are time-varying, which is of great interest to study due to computational burden, communication latency, and asynchrony.

For $i\in\mathcal V,~j \in [n_{i}],~t\in \left[T\right]$, if $\tau_{ij,t}=1$, then the functions $f^{i,t}_{j}(x_{ij,t},x_{-i,t})$ and $g^{i,t}_{j}(x_{ij,t})$ can be observed at time $t+1$, which reduce to the traditional online settings; if $1<\tau_{ij,t}<T$, the functions $f^{i,t}_{j}(x_{ij,t},x_{-i,t})$ and $g^{i,t}_{j}(x_{ij,t})$ can be obtained at time $t+\tau_{ij,t}$; if $\tau_{ij,t}\geq T$, the active time of the feedback information escapes from the time horizon $T$, thereby none of delayed feedback information can be used by each agent. To avoid the occurrence of the third situation, the following condition is imposed on the delays.

\begin{assumption}\label{assp.six}
\rm{\textbf{(Priori condition)} For $i\in\mathcal V,~j \in [n_{i}],~\exists~t_{0}\in \left[T-1\right]$, such that the active time of feedback information satisfies the following condition:
\begin{equation*}
t_{0} < t_{0}+\tau_{ij,t_{0}} \leq T.
\end{equation*}}
\end{assumption}

\begin{remark}
\rm{Assumption \ref{assp.six} means that there exists at least one time instant $t_{0}\in[T]$ such that $\mathcal S_{ij,t_{0}}$ is nonempty for agent $j$ in cluster $i$. In other words, the agents can receive feedback information during the updates of the primal variables and the dual variables. More importantly, the feedback delay $\tau_{ij,t}$ of each agent $j$ in cluster $i$ at time $t$ considered here can change over time. At present, only constant delays are considered in \cite{Cao_2021,Liu_2023}.}
\end{remark}

The existence of time-varying delays brings significant challenges to the design of the online algorithm. One notable difficulty is to determine the information that becomes available in each round. To address this issue, denote the set of feedback timestamps that is available to agent $j$ in cluster $i$ during period $(t,t+1]$ as $\mathcal S_{ij,t}=\{s|t<s+\tau_{ij,s}\leq t+1, \tau_{ij,s} \in \mathcal T_{ij}\}$ with a local buffer $\mathcal T_{ij}$ storing feedback delay $\tau_{ij,t}$ of agent $j$ in cluster $i$ at each time $t$. That is, the feedbacks presented to agent $j$ in cluster $i$ are the functions $f^{i,k}_{j}$ and $g^{i,k}_{j}$ for $k\in \mathcal S_{ij,t}$ during period $(t,t+1]$. It can be easily verified that $\{\mathcal S_{ij,t}\}_{t \in [T]}$ are matually disjoint, i.e., $\mathcal S_{ij,m}\cap \mathcal S_{ij,n}= \emptyset,~m \neq n,~\forall m,n \in [T]$ and $\mathcal S_{ij,t} \subseteq [t]$. It is worth noting that the non-delayed setting corresponds to the case that $\mathcal S_{ij,t} = \{t\}$ for time $t \in [T]$. 

Notably, in a partial-decision information scenario, the decisions of all the competitors can not be fully accessed by an individual agent. Thus, every agent should hold an auxiliary variable to estimate the strategies of the other agents over graph $\mathcal G$. Let $\bm x_{ij,t}$ represent the estimation of agent $j$ in cluster $i$ to all other agents at time $t$, where $\bm x_{ij,t}=col\left(\bm x^{1}_{ij,t},\cdots,\bm x^{N}_{ij,t}\right)$ with $\bm x^{k}_{ij,t}$ denoting the estimation of agent $j$ in cluster $i$ to the agents in cluster $k$. Furthermore, $\bm x^{p}_{ij,t}=col(\bm x^{p1}_{ij,t}, \cdots, \bm x^{pn_{p}}_{ij,t})$ with $\bm x^{ph}_{ij,t}$ denoting the estimation of agent $j$ in cluster $i$ to  agent $h$ in cluster $p$, where $h \in[n_{p}]$. Note that, $\bm x^{ij}_{ij,t}=x_{ij,t}$.

The designed distributed online delay-tolerant GNE learning algorithm (DODTA) for agent $j$ in cluster $i$ is summarized in Algorithm $1$.

\begin{algorithm}
\caption{Distributed Online Delay-Tolerant GNE Learning Algorithm (DODTA) from the view point of agent $j$ in cluster $i$}
\textbf{Initialize:}
\begin{enumerate}
\item[]Select $x_{ij,1}\in \Omega_{i}$, $\bm x^{ph}_{ij,1}\in \mathbb R$ for any $(p,h)\neq(i,j)$, $\mu_{ij,1}=\bm 0_{m}$ and $z_{ij,1} =\bm 0_{m}$.
\item[]Create the local buffer $\mathcal T_{ij}=\emptyset$.
\end{enumerate}
\textbf{for} $t=1,2,\cdots,T$ \textbf{do}
\begin{enumerate}
\item[]Receive $x_{ik,t}$ from the in-neighbors of agent $j$ in cluster $i$ over graph $\mathcal G_{i}$, $\bm x^{ph}_{qk,t}$, $\mu_{qk,t}$, and $z_{qk,t}$ from the in-neighbors of agent $j$ in cluster $i$ over graph $\mathcal G$.
\item[]Store $\tau_{ij,t}$ into local buffer $\mathcal T_{ij}$.
\item[]Receive the feedback information of $f^{i,s}_{j}$ and $g^{i,s}_{j}$ for $s \in  \mathcal S_{ij,t}$.  
\item[] \textbf{Updates} $\bm x^{ph}_{ij,t+1}$ by (\ref{eq.A1_2}); $x_{ij,t+1}$ by (\ref{eq.A1_3}) and (\ref{eq.A1_4});  $z_{ij,t+1}$ by (\ref{eq.A1_6}); $\mu_{ij,t+1}$ by (\ref{eq.A1_8}) and (\ref{eq.A1_7}).
\item[]Broadcast $x_{ij,t+1}$ to the out-neighbors of agent $j$ in cluster $i$ over graph $\mathcal G_{i}$, $\bm x^{ph}_{ij,t+1}$, $\mu_{ij,t+1}$, and $z_{ij,t+1}$ to the out-neighbors of agent $j$ in cluster $i$ over graph $\mathcal G$.
\end{enumerate}
\textbf{end for}
\end{algorithm}


Let $\left[ W\right]^{qk}_{ij}$ represent the $(\sum_{s=0}^{i-1}n_{s}+j)$th row and the $(\sum_{s=0}^{q-1}n_{s}+k)$th column element of $W$ for $k \in [n_{q}]$, $j \in [n_{i}]$ and $n_{0}\triangleq 0$. The updating procedures of $\bm x^{ph}_{ij,t}$, $x_{ij,t}$, $z_{ij,t}$ and $\mu_{ij,t}$ can be described as
\begin{equation}\label{eq.A1_2}
\bm x^{ph}_{ij,t+1}=\sum_{q=1}^{N}\sum_{k=1}^{n_{q}} \left[W\right]^{qk}_{ij}\bm x^{ph}_{qk,t},~(p,h)\neq(i,j),
\end{equation}
\begin{equation}\label{eq.A1_3}
x_{ij,t+1}=\mathcal P_{\Omega_{i}}\bigg(\sum_{k=1}^{n_{i}}\left[  W_{i}\right]_{jk}\bigg(x_{ik,t}-\alpha_{t}\sum_{s \in \mathcal S_{ik,t}} y^{s}_{ik,t}\bigg)\bigg),
\end{equation}
\begin{equation}\label{eq.A1_4}
y^{s}_{ij,t}=\nabla_{x_{ij,t}} f^{i,s}_{j}(x_{ij,t},\bm x^{-i}_{ij,t})+(\mu_{ij,t})^\top\nabla \left[g^{i,s}_{j}(x_{ij,t})\right]_{+},
\end{equation}
\begin{equation}\label{eq.A1_6}
z_{ij,t+1}=z_{ij,t}-\beta_{t}\sum_{q=1}^{N}\sum_{k=1}^{n_{q}} \left[W\right]^{qk}_{ij}\left(\mu_{ij,t}-\mu_{qk,t}\right),
\end{equation}
\begin{equation}\label{eq.A1_8}
\widetilde{z}_{ij,t}=\sum_{q=1}^{N}\sum_{k=1}^{n_{q}} \left[ W\right]^{qk}_{ij}\left(z_{ij,t}-z_{qk,t}\right),
\end{equation}
\begin{equation}\label{eq.A1_7}
\mu_{ij,t+1}=\bigg[\mu_{ij,t}+\gamma_{t}\bigg(\widetilde{z}_{ij,t}-\sigma_{t}\sum_{s \in \mathcal S_{ij,t}} \left[g^{i,s}_{j}\left(x_{ij,t}\right)\right]_{+}\bigg)\bigg]_{+},
\end{equation}
where $\alpha_{t}$, $\beta_{t}$ and $\gamma_{t}$ represent the step size sequences,  $\sigma_{t}=4^{t-T}$, $\bm x^{-i}_{ij,t}=col\left(\bm x^{1}_{ij,t},\cdots,\bm x^{i-1}_{ij,t},\bm x^{i+1}_{ij,t},\cdots,\bm x^{N}_{ij,t}\right)$, $\mu_{ij,t}$ is a dual multiplier related to the inequality constraints to track $\mu^{*}_{t}$ and $z_{ij,t}$ is a local auxiliary variable to ensure the consensus of the local multipliers $\mu_{ij,t}$ for all agents and estimate the contributions of the other agents under the coupled inequality constraints. With a little abuse of terminology and notation, $\nabla [g^{i,s}_{j}(x_{ij,t})]_{+}=col\big(\big[\nabla [g^{i,s}_{j}(x_{ij,t})]_{+}\big]_{k}\big)_{k\in[m]}$ is defined as
\begin{equation}
\big[\nabla [g^{i,s}_{j}(x_{ij,t})]_{+}\big]_{k}=\left\{\begin{aligned}&\nabla\big[g^{i,s}_{j}(x_{ij,t})\big]_{k}, &\big[g^{i,s}_{j}(x_{ij,t})\big]_{k} \geq 0, \\&0, &\big[g^{i,s}_{j}(x_{ij,t})\big]_{k} < 0.\end{aligned}\right.
\end{equation}

If there exists $t_{0}\in[T]$ such that $\mathcal S_{ij,t_{0}}= \emptyset$, then the agents update $x_{ij,t_{0}+1}$ and $\mu_{ij,t_{0}+1}$ without any feedback information. Note that the terms $\sum_{s \in \mathcal S_{ik,t_{0}}} y^{s}_{ik,t_{0}}$ in (\ref{eq.A1_3}) and $\sum_{s \in \mathcal S_{ij,t_{0}}} [g^{i,s}_{j}\left(x_{ij,t_{0}}\right)]_{+}$ in (\ref{eq.A1_7}) are equal to $0$ and $\bm 0_{m}$ respectively.

\begin{remark}
\rm{The update of each agent relies on its local variables, cost functions and constraint functions in the past time. Simultaneously, every agent needs to communicate with its neighbors in the same cluster or even other clusters. Furthermore, none of the information will be shared among agents directly, thus Algorithm 1 designed here is truly  distributed. Compared with \cite{Yue_2022}, there is no need to set a virtual center to gather information in each cluster, consequently the complexity of the algorithm is reduced. }
\end{remark}

For simplicity, denote $col(\cdot)_{i \in \mathcal V, j \in [n_{i}]}$ and $diag(\cdot)_{i \in \mathcal V, j \in [n_{i}]}$ as $col(\cdot)$ and $diag(\cdot)$, respectively. 

Let $\bm x_{t}=col(\bm x_{ij,t})$, $x_{t}=col(x_{ij,t})$, $z_{t}=col(z_{ij,t})$, $\mu_{t}=col(\mu_{ij,t})$, $\widetilde F_{t}=col(\sum_{s \in \mathcal S_{ij,t}}\nabla_{x_{ij,t}}f^{i,s}_{j}(x_{ij,t},\bm x^{-i}_{ij,t}))$, ${G}_{t}=col(\sum_{s \in \mathcal S_{ij,t}}[g^{i,s}_{j}(x_{ij,t})]_{+})$ and $\widetilde G_{t}=col(\sum_{s \in \mathcal S_{ij,t}}\nabla [g^{i,s}_{j}(x_{ij,t})]_{+})$. Then, (\ref{eq.A1_2})-(\ref{eq.A1_7}) can be compactly written as
\begin{equation}\label{CAl_1}
\bm x_{t+1}=\overline W \bm x_{t},
\end{equation}
\begin{align}\label{CAl_2}
x_{t+1}=\mathcal P_{\bm\Omega}\left(\widehat W\left(x_{t}-\alpha_{t}\left(\widetilde F_{t}+\widehat \mu_{t} \widetilde G_{t}\right)\right)\right),
\end{align}
\begin{equation}\label{CAl_4}
z_{t+1}=z_{t}-\beta_{t}\overline L\mu_{t},
\end{equation}
\begin{equation}\label{CAl_5}
\mu_{t+1}=\left[\mu_{t}+\gamma_{t}\left(\overline Lz_{t}-\sigma_{t}{G}_{t}\right)\right]_{+},
\end{equation}
where $\overline W= W \otimes I_{n}$, $\widehat W=diag\left(W_{i}\right)_{i \in \mathcal V}$, $\widehat \mu_{t}=diag\left(\mu_{ij,t}^\top\right)$ and $\overline L=L \otimes I_{m}$ with $L\in\mathbb R^{n\times n}$ denoting the Laplacian matrix of graph $\mathcal G$, i.e., $[L]_{ij}^{qk}=-[W]_{ij}^{qk}$ if $(i,j)\neq(q,k)$ and $[L]_{ij}^{ij}=\sum_{q=1}^{N}\sum_{k=1}^{n_{q}}[W]_{ij}^{qk}$ otherwise. 
\section{Main Results}\label{sec.three}
In this section, the upper bounds of the system-wise regret and the constraint violation are established. To do so, some necessary Lemmas are first presented.

\begin{lemma}\label{lemma_2}
\rm{Let Assumptions \ref{assp.one}, \ref{assp.four}-\ref{assp.six} hold. For any $i,p \in \mathcal V$, $j \in [n_{i}]$ and $q \in [n_{p}]$, it holds that 
\begin{align*}
&\sum_{t=1}^{T}\left\|\bm x^{ij}_{pq,t}-x_{ij,t}\right\|
\\&\leq\frac{4\sqrt{n}R}{1-\lambda^{-}}+\frac{\sqrt{n}}{1-\lambda^{-}}\sum_{k=1}^{n_{i}}\sum_{t=1}^{T}\left\|x_{ik,t}-x_{ij,t}\right\|
\\&\quad+\frac{\sqrt{n}G}{1-\lambda^{-}}\sum_{k=1}^{n_{i}}\sum_{t=1}^{T}\sum_{s\in \mathcal S_{ij,t}}\alpha_{t}(1+\left\|\mu_{ik,t}\right\|),
\end{align*}
where $\lambda^{-}=\underset{i\in\mathcal V,j\in [n_{i}]}{max}\left\{\lvert\lambda_{max}(W_{ij}^{-})\rvert\right\}<1$ with $W^{-}_{ij}$ representing the submatrix of $W$ in which the element in the $(\sum_{s=1}^{i-1}n_s+j)$th row and the $(\sum_{s=1}^{i-1}n_s+j)$th column is removed. }
\end{lemma}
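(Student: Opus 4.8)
The plan is to fix the target agent $(i,j)$ and analyze how the copy $\bm x^{ij}_{pq,t}$ held by every other agent tracks the true decision $x_{ij,t}=\bm x^{ij}_{ij,t}$. The essential observation is an asymmetry in the updates: agent $(i,j)$'s own decision evolves by the projected pseudo-gradient rule (\ref{eq.A1_3}) using the intra-cluster matrix $W_i$, whereas all other agents' copies $\bm x^{ij}_{pq,t}$ evolve by the pure averaging rule (\ref{eq.A1_2}) driven by $W$. I would therefore single out the index $\ell_0=\sum_{s=0}^{i-1}n_s+j$, collect the $n-1$ estimates $\{\bm x^{ij}_{pq,t}\}_{(p,q)\neq(i,j)}$ into a vector $\eta_t\in\mathbb R^{n-1}$, and treat $x_{ij,t}$ as an exogenous, bounded reference signal fed into these averaging dynamics.

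First I would write the restricted recursion $\eta_{t+1}=W^-_{ij}\eta_t+c\,x_{ij,t}$, where $W^-_{ij}$ is the submatrix appearing in the statement and $c$ is the deleted column of $W$. Since $W$ is row-stochastic, $W^-_{ij}\bm 1+c=\bm 1$, so in terms of the consensus error $\epsilon_t=\eta_t-x_{ij,t}\bm 1$ this collapses to $\epsilon_{t+1}=W^-_{ij}\epsilon_t+(x_{ij,t}-x_{ij,t+1})\bm 1$. Using symmetry of $W$ together with the strict spectral-radius bound $\|W^-_{ij}\|=\lvert\lambda_{max}(W^-_{ij})\rvert\le\lambda^-<1$, this yields the one-step contraction $\|\epsilon_{t+1}\|\le\lambda^-\|\epsilon_t\|+\sqrt{n}\,\|x_{ij,t+1}-x_{ij,t}\|$. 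Unrolling the geometric recursion and summing over $t$ gives $\sum_{t=1}^T\|\epsilon_t\|\le \frac{\|\epsilon_1\|}{1-\lambda^-}+\frac{\sqrt{n}}{1-\lambda^-}\sum_{t=1}^{T}\|x_{ij,t+1}-x_{ij,t}\|$, and since $\|\bm x^{ij}_{pq,t}-x_{ij,t}\|$ is one coordinate of $\epsilon_t$, it is bounded by $\|\epsilon_t\|$.

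Next I would bound the per-step increment using (\ref{eq.A1_3}). Because $x_{ij,t}\in\Omega_i$ equals its own projection and $W_i$ is row-stochastic, non-expansiveness of $\mathcal P_{\Omega_i}$ gives $\|x_{ij,t+1}-x_{ij,t}\|\le\sum_{k=1}^{n_i}\|x_{ik,t}-x_{ij,t}\|+\alpha_t\sum_{k=1}^{n_i}\sum_{s\in\mathcal S_{ik,t}}\|y^s_{ik,t}\|$. The definition (\ref{eq.A1_4}) together with the uniform gradient bounds $\|\nabla f\|\le G$, $\|\nabla g\|\le G$ from Assumption \ref{assp.one} (the $[\cdot]_+$ operation only zeroes out components and so cannot increase the gradient norm) yields $\|y^s_{ik,t}\|\le G(1+\|\mu_{ik,t}\|)$. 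Summing over $t$ reproduces the second and third terms of the claimed bound, while the constant $\tfrac{4\sqrt n R}{1-\lambda^-}$ comes from bounding the initial consensus error $\|\epsilon_1\|$: since $\Omega_i$ has radius $R$ and $x_{ij,t}\in\Omega_i$ for all $t$, the averaging dynamics keep the estimates bounded, so $\|\epsilon_1\|=\mathcal O(\sqrt n R)$.

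The step I expect to be the main obstacle is justifying the contraction $\|W^-_{ij}\epsilon_t\|\le\lambda^-\|\epsilon_t\|$ — that deleting the target index turns the doubly-stochastic $W$ into a strictly sub-stochastic matrix with spectral radius below $1$. This requires invoking connectivity (Assumption \ref{assp.four}) to argue that every remaining node can leak mass toward the deleted one, and, for the Euclidean-norm bound, the symmetry of $W$ so that its spectral radius coincides with its operator norm; if $W$ is merely doubly stochastic but not symmetric, one would instead estimate the increment in a suitable weighted norm and convert back at the cost of an extra constant. The conceptual crux throughout is recognizing that the target agent's non-averaging update forces us to separate out its coordinate and treat $x_{ij,t}$ as a bounded driving term rather than as part of the consensus iteration.
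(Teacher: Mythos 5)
Your proposal is correct and matches the paper's proof essentially step for step: the paper defines the same consensus-error vector $e^{ij}_{t}=col\left(\bm x^{ij}_{pq,t}-x_{ij,t}\right)_{(p,q)\neq(i,j)}$ (your $\epsilon_{t}$), obtains the same one-step contraction $\left\|e^{ij}_{t+1}\right\|\leq\lambda^{-}\left\|e^{ij}_{t}\right\|+\sqrt{n}\left\|x_{ij,t+1}-x_{ij,t}\right\|$, bounds the drift via non-expansiveness of the projection, row-stochasticity of $W_{i}$, and $\left\|y^{s}_{ik,t}\right\|\leq G\left(1+\left\|\mu_{ik,t}\right\|\right)$, and then unrolls the geometric recursion and sums over $t$. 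Even the step you flag as the crux is resolved exactly as you anticipate: the paper proves $\lvert\lambda\left(W^{-}_{ij}\right)\rvert<1$ via Gerschgorin's disk theorem combined with positive definiteness of $I_{n-1}-W^{-}_{ij}$ under the connectivity assumption, and identifies the operator norm of $W^{-}_{ij}$ with its spectral radius in the same way.
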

\begin{proof}
See Appendix A.
\end{proof}

To obtain the bounds of the dual variables $\mu_{ij,t}$ and the auxiliary variables $z_{ij,t}$, the following Assumption is needed.
\begin{assumption}\label{assp.seven}
\rm{\textbf{(Step size criterions)} The sequences $\left\{\alpha_{t}\right\}_{t \in \left[T\right]}$, $\left\{\beta_{t}\right\}_{t \in \left[T\right]}$ and $\left\{\gamma_{t}\right\}_{t \in \left[T\right]}$ satisfy the following conditions:
\begin{enumerate}[(a)]
\item{$0<\alpha_{t}\leq\alpha_{t-1}\leq 1$, $0<\beta_{t}\leq\beta_{t-1}\leq 1$,~$0<\gamma_{t}\leq\gamma_{t-1}\leq 1$}.
\item{$0<\beta_{t}\gamma_{t-1}\leq\beta_{t-1}\gamma_{t}\leq1$}.
\end{enumerate}}
\end{assumption}

In the next lemma, common bounds of the dual variables $\mu_{ij,t}$ and the auxiliary variables $z_{ij,t}$ are established by mathematical induction.
\begin{lemma}\label{lemm_1}
\rm{Let Assumptions \ref{assp.one}, \ref{assp.four}-\ref{assp.seven} hold. For any $i \in \mathcal V$, $j \in [n_{i}]$ and $t \in [T]$, it holds that 
\begin{equation}\label{eq.lemm_21}
\begin{aligned}
\left\|\mu_{ij,t}\right\|\leq\frac{\sigma_{t}(t-1)K}{\Delta_{t-1}},
\end{aligned}
\end{equation}
\begin{equation}\label{eq.lemm_22}
\begin{aligned}
\left\|z_{ij,t}\right\|\leq\frac{\sigma_{t}(t-1)K}{\Delta_{t-1}},
\end{aligned}
\end{equation}
where $\Delta_{t}=\beta_{t}\gamma_{t}^{-1}$.
Further, let $\lambda=\lambda_{max}\left( W-\frac{1}{n}\bm1_{n}\bm1^{T}_{n}\right)$ and $\bar{\mu}_{t}=\frac{1}{n}\sum_{i=1}^{N}\sum_{j=1}^{n_{i}}\mu_{ij,t}$. Then, for $t \geq 2$, 
\begin{equation}\label{eq.lemm_23}
\begin{aligned}
\left\|\mu_{ij,t}-\bar{\mu}_{t}\right\|\leq\sum_{s=0}^{t-2}\frac{5\sqrt{n}\lambda^{s}\sigma_{t-s-1}\gamma_{t-s-2}\left(t-s-1\right)K}{\beta_{t-s-2}}.
\end{aligned}
\end{equation}}
\end{lemma}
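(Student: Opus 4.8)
The plan is to establish the two uniform bounds \eqref{eq.lemm_21} and \eqref{eq.lemm_22} \emph{simultaneously} by induction on $t$, and then to derive the consensus estimate \eqref{eq.lemm_23} from them by unrolling the coupled $z$--$\mu$ recursion. Throughout I write $M_t=\sigma_t(t-1)K/\Delta_{t-1}$ for the common right-hand side, and I use three facts: $\|g^{i,s}_j\|\le K$ from Assumption \ref{assp.one}; the identity $\sigma_{t+1}=4\sigma_t$ coming from $\sigma_t=4^{t-T}$; and $|\mathcal S_{ij,t}|\le t$, since $\mathcal S_{ij,t}\subseteq[t]$. The base case $t=1$ is immediate: the initialization $\mu_{ij,1}=z_{ij,1}=\bm 0_m$ makes both left-hand sides vanish, matching $M_1=0$.

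For the inductive step, assume $\|\mu_{ij,t}\|\le M_t$ and $\|z_{ij,t}\|\le M_t$ for all $i,j$. First, from \eqref{eq.A1_6} and the triangle inequality, $\|z_{ij,t+1}\|\le\|z_{ij,t}\|+\beta_t\sum_{q,k}[W]^{qk}_{ij}\|\mu_{ij,t}-\mu_{qk,t}\|\le M_t(1+2\beta_t)$, where I used $\sum_{q,k}[W]^{qk}_{ij}=1$ (Assumption \ref{assp.five}) and $\|\mu_{ij,t}-\mu_{qk,t}\|\le 2M_t$. Applying $\gamma_{t-1}/\beta_{t-1}\le\gamma_t/\beta_t$ (Assumption \ref{assp.seven}(b)) to rewrite $M_t\le\sigma_t(t-1)K\gamma_t/\beta_t$, together with $1+2\beta_t\le 3$, gives $M_t(1+2\beta_t)\le 3(t-1)\sigma_tK\gamma_t/\beta_t\le 4t\,\sigma_tK\gamma_t/\beta_t=M_{t+1}$, the last step using $\sigma_{t+1}=4\sigma_t$; this closes \eqref{eq.lemm_22}. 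For \eqref{eq.lemm_21}, since the projection $[\cdot]_+$ is nonexpansive and fixes the origin, \eqref{eq.A1_7}--\eqref{eq.A1_8} yield $\|\mu_{ij,t+1}\|\le\|\mu_{ij,t}\|+\gamma_t\|\widetilde z_{ij,t}\|+\gamma_t\sigma_t\sum_{s\in\mathcal S_{ij,t}}\|[g^{i,s}_j]_+\|$, with $\|\widetilde z_{ij,t}\|\le 2M_t$ as above and the last sum bounded by $|\mathcal S_{ij,t}|K\le tK$. Hence $\|\mu_{ij,t+1}\|\le M_t(1+2\gamma_t)+\gamma_t\sigma_t tK$, and the same substitution together with $\gamma_t,\beta_t\le1$ and $3(t-1)+t=4t-3\le 4t$ gives $\|\mu_{ij,t+1}\|\le M_{t+1}$, completing the simultaneous induction.

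For the consensus estimate \eqref{eq.lemm_23} I would control $\mu_{ij,t}-\bar\mu_t$. Writing $\mu_{ij,t}-\bar\mu_t=\tfrac1n\sum_{p,q}(\mu_{ij,t}-\mu_{pq,t})$, it suffices to unroll the $\mu$-recursion \eqref{eq.A1_7}--\eqref{eq.A1_8} together with the $z$-recursion \eqref{eq.A1_6}. Casting the iteration in stacked form and restricting to the disagreement subspace $\mathrm{span}(\bm 1_n)^\perp\otimes\mathbb R^m$, on which $\overline W=W\otimes I_m$ acts with spectral radius $\lambda=\lambda_{max}(W-\tfrac1n\bm 1_n\bm 1_n^\top)<1$, each backward step contributes a factor $\lambda$, so the accumulated error telescopes into a convolution $\sum_{s=0}^{t-2}\lambda^{s}\times(\text{forcing at time }t-s-1)$. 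The forcing at time $\tau$ is generated by the $\sigma_\tau G_\tau$ term and the step-size-weighted increments of $z$; bounding these through $\|g\|\le K$ and the already-proved estimates $\|z_\tau\|,\|\mu_\tau\|\le M_\tau$, with the $\sqrt n$ factors arising from Cauchy--Schwarz when converting between the stacked Euclidean norm on $\mathbb R^{nm}$ and the per-agent norms, yields exactly the summand $5\sqrt n\,\lambda^{s}\sigma_{t-s-1}\gamma_{t-s-2}(t-s-1)K/\beta_{t-s-2}$. The nonlinear map $[\cdot]_+$ is accommodated throughout by its nonexpansiveness.

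The main obstacle is precisely \eqref{eq.lemm_23}: extracting the geometric $\lambda^{s}$ decay from the coupled primal--dual dynamics while the projection $[\cdot]_+$ is present, and matching the exact step-size ratio $\gamma_{t-s-2}/\beta_{t-s-2}$ in each summand, is considerably more delicate than the monotone telescoping used for \eqref{eq.lemm_21}--\eqref{eq.lemm_22}. By contrast, those two uniform bounds close cleanly because the geometric growth $\sigma_{t+1}=4\sigma_t$ supplies exactly the slack $4t-3\le 4t$ left over after invoking Assumption \ref{assp.seven}(b).
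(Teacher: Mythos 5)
Your induction for \eqref{eq.lemm_21} and \eqref{eq.lemm_22} is correct and is essentially the paper's own argument: the same simultaneous induction, the same per-agent triangle inequalities using $\sum_{q,k}[W]^{qk}_{ij}=1$, the same use of Assumption \ref{assp.seven}(b) in the form $\gamma_{t-1}/\beta_{t-1}\leq\gamma_{t}/\beta_{t}$, and the same slack $4t-3\leq4t$ absorbed by $\sigma_{t+1}=4\sigma_{t}$.

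The gap is \eqref{eq.lemm_23}, which you only sketch and yourself flag as the main obstacle. The missing idea --- exactly the device the paper uses --- is to make the perturbation explicit rather than trying to propagate the projection and the $z$-coupling through the contraction. Set
\begin{equation*}
\epsilon_{t}\triangleq\mu_{t+1}-(W\otimes I_{m})\mu_{t}=\left[\mu_{t}+\gamma_{t}\left(\overline{L}z_{t}-\sigma_{t}G_{t}\right)\right]_{+}-(W\otimes I_{m})\mu_{t},
\end{equation*}
so that $\mu_{t+1}=(W\otimes I_{m})\mu_{t}+\epsilon_{t}$ holds \emph{exactly}. Since every $\mu_{qk,t}$ is nonnegative (each is an output of $[\cdot]_{+}$, or $\bm{0}_{m}$ at $t=1$) and $W$ is entrywise nonnegative, the block $[(W\otimes I_{m})\mu_{t}]_{ij}$ lies in $\mathbb{R}^{m}_{\geq0}$ and is a fixed point of $[\cdot]_{+}$; nonexpansiveness of the projection then gives, per agent,
\begin{align*}
\left\|[\epsilon_{t}]_{ij}\right\|&\leq\bigg\|\mu_{ij,t}-\sum_{q=1}^{N}\sum_{k=1}^{n_{q}}[W]^{qk}_{ij}\mu_{qk,t}\bigg\|+\gamma_{t}\left\|[\overline{L}z_{t}]_{ij}\right\|+\gamma_{t}\sigma_{t}\left\|[G_{t}]_{ij}\right\|\\
&\leq 2(1+\gamma_{t})M_{t}+\gamma_{t}\sigma_{t}tK\leq\frac{5\sigma_{t}\gamma_{t-1}tK}{\beta_{t-1}},
\end{align*}
where $M_{t}=\sigma_{t}(t-1)K/\Delta_{t-1}$ is your notation and only the already-proved bounds \eqref{eq.lemm_21}--\eqref{eq.lemm_22} enter; stacking the $n$ blocks produces the $\sqrt{n}$. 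After this the dynamics that must contract is genuinely linear: multiplying $\mu_{t+1}=(W\otimes I_{m})\mu_{t}+\epsilon_{t}$ by $(I_{n}-\frac{1}{n}\bm{1}_{n}\bm{1}_{n}^{\top})\otimes I_{m}$ yields $\|\mu_{t+1}-\bm{1}_{n}\otimes\bar{\mu}_{t+1}\|\leq\lambda\|\mu_{t}-\bm{1}_{n}\otimes\bar{\mu}_{t}\|+\|\epsilon_{t}\|$, which unrolls from $\mu_{1}=\bm{0}_{nm}$ to the stated convolution with the correct index shift. In particular, your worry about extracting the geometric $\lambda^{s}$ decay ``from the coupled primal--dual dynamics while the projection $[\cdot]_{+}$ is present'' dissolves: neither the projection nor the $z$-recursion ever enters the contraction --- both are absorbed into the forcing $\epsilon_{t}$, which is merely bounded. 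As written, your proposal asserts that the convolution ``yields exactly'' the summand $5\sqrt{n}\lambda^{s}\sigma_{t-s-1}\gamma_{t-s-2}(t-s-1)K/\beta_{t-s-2}$ without deriving either the constant $5$ or the index shift, so the third claim remains unproven as it stands, even though the strategy you describe is the right one.
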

\begin{proof}
See Appendix B.
\end{proof}

\begin{lemma}\label{lemm_3}
\rm{Let Assumptions \ref{assp.one}, \ref{assp.four}-\ref{assp.seven} hold. For any $i \in \mathcal V$, $j \in \left[n_{i}\right]$, $t \in \left[T\right]$ and $\mu \in \mathbb R^{m}_{\geq 0}$, it holds that 
\begin{align*}
&-\sum_{t=1}^{T}\sum_{i=1}^{N}\sum_{j=1}^{n_{i}}\sum_{s \in \mathcal S_{ij,t}}\left(\left[g^{i,s}_{j}\left(x_{ij,t}\right)\right]_{+}\right)^\top\mu-\frac{n}{2}\left\|\mu\right\|^{2}
\\&\leq 4nK^{2}\sum_{t=2}^{T}\frac{\sigma_{t}^{2}\gamma_{t-1}^{4}\left(t-1\right)^{2}}{\beta_{t-1}^{2}}+K^{2}\sum_{t=1}^{T}\sum_{i=1}^{N}\sum_{j=1}^{n_{i}}\sigma_{t}^{2}\gamma_{t}^{2}\left|\mathcal S_{ij,t}\right|^{2}
\\&\quad+10K^{2}n\sqrt{n}\sum_{t=2}^{T}\frac{\sigma_{t}\gamma_{t-1}^{2}\left(t-1\right)}{\beta_{t-1}}\sum_{s=0}^{t-2}\frac{\lambda^{s}\sigma_{t-s-1}\left(t-s-1\right)}{\Delta_{t-s-2}}.
\end{align*}}
\end{lemma}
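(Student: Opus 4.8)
The plan is to run the standard primal--dual (saddle-point) argument on the dual update \dref{eq.A1_7}, with one crucial twist that keeps the exponentially small weights $\sigma_t=4^{t-T}$ out of every denominator. First I would invoke the non-expansiveness of the projection $[\cdot]_+$ onto $\mathbb R^m_{\geq0}$: writing $G_{ij,t}\triangleq\sum_{s\in\mathcal S_{ij,t}}[g^{i,s}_j(x_{ij,t})]_+\geq\bm0_m$, for every fixed $\mu\in\mathbb R^m_{\geq0}$ and every agent $(i,j)$,
\begin{align*}
\|\mu_{ij,t+1}-\mu\|^2 &\leq\|\mu_{ij,t}-\mu\|^2+2\gamma_t(\widetilde z_{ij,t}-\sigma_t G_{ij,t})^\top(\mu_{ij,t}-\mu)\\
&\quad+\gamma_t^2\|\widetilde z_{ij,t}-\sigma_t G_{ij,t}\|^2.
\end{align*}
Rearranged, this bounds $2\gamma_t\sigma_t G_{ij,t}^\top(\mu_{ij,t}-\mu)$ by the telescoping difference $\|\mu_{ij,t}-\mu\|^2-\|\mu_{ij,t+1}-\mu\|^2$ plus the $\widetilde z_{ij,t}$ cross term and the squared residual.

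The twist is that, since $G_{ij,t}\geq\bm0_m$, $\mu\geq\bm0_m$ and $\gamma_t\sigma_t\in(0,1]$ (because $\gamma_t\leq1$ by Assumption \ref{assp.seven} and $\sigma_t=4^{t-T}\leq1$ for $t\leq T$), I have the chain $-G_{ij,t}^\top\mu\leq-\gamma_t\sigma_t G_{ij,t}^\top\mu\leq\gamma_t\sigma_t G_{ij,t}^\top(\mu_{ij,t}-\mu)$, the last step discarding the nonnegative $\gamma_t\sigma_t G_{ij,t}^\top\mu_{ij,t}$. This lets me replace the \emph{unweighted} left-hand side of the lemma by the naturally $\gamma_t\sigma_t$-weighted quantity that the projection produces, so that I never divide by $\sigma_t$ (which would be fatal, as $\sigma_1=4^{1-T}$). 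Summing the rearranged projection inequality over $t=1,\dots,T$, the $\|\mu_{ij,t}-\mu\|^2$ differences telescope with coefficient one, and the initialization $\mu_{ij,1}=\bm0_m$ leaves the boundary term $\|\mu_{ij,1}-\mu\|^2=\|\mu\|^2$; summing over the $n$ agents gives exactly $\tfrac n2\|\mu\|^2$, which cancels the $-\tfrac n2\|\mu\|^2$ on the left. What remains to bound is $\sum_{t,i,j}\gamma_t\widetilde z_{ij,t}^\top(\mu_{ij,t}-\mu)+\tfrac12\sum_{t,i,j}\gamma_t^2\|\widetilde z_{ij,t}-\sigma_t G_{ij,t}\|^2$.

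For the residual I would use $\|\widetilde z_{ij,t}-\sigma_t G_{ij,t}\|^2\leq2\|\widetilde z_{ij,t}\|^2+2\sigma_t^2\|G_{ij,t}\|^2$. The estimate $\|G_{ij,t}\|\leq|\mathcal S_{ij,t}|K$ (from $\|g^{i,s}_j\|\leq K$ in Assumption \ref{assp.one}) gives the second summand $K^2\sum_{t,i,j}\sigma_t^2\gamma_t^2|\mathcal S_{ij,t}|^2$. Estimating $\|\widetilde z_{ij,t}\|\leq 2\sigma_t(t-1)K\gamma_{t-1}/\beta_{t-1}$ through the doubly-stochastic weights in \dref{eq.A1_8} together with the bound \dref{eq.lemm_22} on $\|z_{ij,t}\|$ from Lemma \ref{lemm_1} (and $\gamma_t\leq\gamma_{t-1}$) gives the first summand $4nK^2\sum_{t=2}^T\sigma_t^2\gamma_{t-1}^4(t-1)^2/\beta_{t-1}^2$.

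The main obstacle is the cross term $\sum_{t,i,j}\gamma_t\widetilde z_{ij,t}^\top(\mu_{ij,t}-\mu)$, which I would control by exploiting the Laplacian structure. Since $\widetilde z_t=\overline L z_t$ and $\overline L$ has zero column sums, $\sum_{i,j}\widetilde z_{ij,t}=\bm0_m$, so the fixed comparator $\mu$ drops out and $\sum_{i,j}\widetilde z_{ij,t}^\top(\mu_{ij,t}-\mu)=\sum_{i,j}\widetilde z_{ij,t}^\top(\mu_{ij,t}-\bar\mu_t)$, i.e.\ only the dual consensus error survives. Bounding this by $\sum_{i,j}\|\widetilde z_{ij,t}\|\,\|\mu_{ij,t}-\bar\mu_t\|$ and inserting the above $\|\widetilde z_{ij,t}\|$ estimate together with the consensus-error bound \dref{eq.lemm_23} from Lemma \ref{lemm_1} yields precisely the third summand. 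Tracking the constants (the $2\cdot5\sqrt n$ from the two factors, times the extra $n$ from summing over agents, giving $10\sqrt n\cdot n$) is the delicate bookkeeping, but it requires no new idea beyond Lemma \ref{lemm_1}; the genuinely load-bearing steps are the weight-reduction inequality $-G_{ij,t}^\top\mu\leq\gamma_t\sigma_tG_{ij,t}^\top(\mu_{ij,t}-\mu)$ and the consensus reduction via $\sum_{i,j}\widetilde z_{ij,t}=\bm0_m$.
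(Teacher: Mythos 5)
Your proposal is correct and follows essentially the same route as the paper's proof: the nonexpansive expansion of the dual update, the weight-reduction step $-G_{ij,t}^\top\mu\le\gamma_t\sigma_t G_{ij,t}^\top(\mu_{ij,t}-\mu)$ (the paper writes this as bounding $2\gamma_t\sigma_t\langle\mu-\mu_{ij,t},G_{ij,t}\rangle$ by $2\langle\mu,G_{ij,t}\rangle$), telescoping against $\mu_{ij,1}=\bm 0_m$, splitting the residual into $\|\widetilde z_{ij,t}\|^2$ and $\sigma_t^2\|G_{ij,t}\|^2$ parts, and handling the $\widetilde z_{ij,t}$ cross term via the consensus error of Lemma \ref{lemm_1} plus the vanishing of $\sum_{i,j}\widetilde z_{ij,t}$ under double stochasticity. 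The constants and the invocations of \dref{eq.lemm_22} and \dref{eq.lemm_23} also match the paper's bookkeeping, so there is nothing to correct.
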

\begin{proof}
See Appendix C.
\end{proof}

For each agent $j$ in cluster $i$, let $\mathcal U_{ij,t}$ represent the set of time instants contained in $[t]$ at which the feedback information can not be gained during period $[1,t+1]$. In other words, $\mathcal U_{ij,t}=[t]\setminus \cup_{h=1}^{t}\mathcal S_{ij,h}$. The maximum of the number of the unavailable time instants during each time period $[1,t]$ is denoted as $\left|\mathcal U_{ij,[T]}\right|$, i.e.,
\begin{align*}
\left|\mathcal U_{ij,[T]}\right|=\mathop{\max}\limits_{t\in [T]}\left|\mathcal U_{ij,t}\right|.
\end{align*}

Furthermore, $\mathcal T_{ij,T}=\sum_{t=1}^{T}\sum_{s\in \mathcal S_{ij,t}}\tau_{ij,s}$ represents the sum of delays of the feedback information that occur during period $[1,T+1]$ and $\Phi_{T}=\sum_{t=1}^{T}\sum_{i=1}^{N}\left\|x^{*}_{i,t+1}-x^{*}_{i,t}\right\|$ reflects the extent of the variation for vGNE at each time. To proceed,  the following assumptions are imposed.

\begin{assumption}\label{assp.eight}
\rm{\textbf{(Common upper boundedness)}
For all agent $j$ in cluster $i$, there exists a common constant $c>0$ such that $\left|\mathcal S_{ij,t}\right|\leq c$ holds for all $t \in [T]$.}
\end{assumption}

\begin{assumption}\label{assp.nine}
\rm{\textbf{(Sublinear condition)}
For agent $j$ in cluster $i$, both $\left|\mathcal U_{ij,[T]}\right|$ and the path violation $\Phi_{T}$ are sublinear, i.e. there exist some constants $0\leq u<1$ and $0\leq \phi<1$ such that $\left|\mathcal U_{ij,[T]}\right|\leq \mathcal O(T^{u})$ and $\Phi_{T}\leq \mathcal O(T^{\phi})$.}
\end{assumption}

\begin{assumption}\label{assp.ten}
\rm{\textbf{(Delay condition)} 
For agent $j$ in cluster $i$, there exists a constant $1\leq \tau<2-\phi$ such that $\mathcal T_{ij,T}\leq \mathcal O(T^{\tau})$.}
\end{assumption}

\begin{remark}
\rm{Assumptions \ref{assp.eight}-\ref{assp.ten} mean that the feedback information at every moment getting closer to be known to the agent in limited time as $T$ increases. Besides, the vGNE at each moment cannot deviate too much from the one at the previous moment. Particularly, traditional online games without delays correspond to the case with $c=1,~\left|\mathcal U_{ij,[T]}\right|=0,~\mathcal T_{ij,T}=T$ and the online games with constant delays $2<t_{0}<T$ correspond to the case with $c=1,~\left|\mathcal U_{ij,[T]}\right|=t_{0}-1,~\mathcal T_{ij,T}=t_{0}(T-t_{0}+1)$.}
\end{remark}

Next, the convergence of Algorithm 1 is analyzed, for which some lemmas are first prepared.
\begin{lemma}\label{lemma_6}
\rm{Suppose Assumptions \ref{assp.one}, \ref{assp.four}-\ref{assp.eight} hold. Let $\alpha_{t}=T^{-a_{1}}$, $\beta_{t}=T^{-a_{2}}$ and $\gamma_{t}=T^{-a_{3}}$ with $0<a_{1}<1$ and $0<a_{2}<1<a_{3}<2$ for all $t \geq 0$ in Algorithm 1. Then, it holds that 
\begin{align*}
&\sum_{t=1}^{T}\left\|x_{ij,t}-\bar{x}_{i,t}\right\|\leq b_{1}+b_{2}T^{1-a_{1}}+b_{3}T^{2+a_{2}-a_{1}-a_{3}},
\end{align*}
where $\bar x_{i,t}=\frac{1}{n_{i}}\sum_{j=1}^{n_{i}}x_{ij,t}$, $\psi=1-\frac{a}{4n_{i}^{2}}$ with $a$ being given in Assumption \ref{assp.five}, $b_{1}=\frac{nR}{\psi^{3}(1-\psi)}+\left\|x_{ij,1}-\bar{x}_{i,1}\right\|$, $b_{2}=\frac{3ncG}{\psi^{3}(1-\psi)}$ and $b_{3}=b_{2}K$.}
\end{lemma}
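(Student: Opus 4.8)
The plan is to derive a scalar contraction recursion for the within-cluster disagreement and then sum it. Since $\Omega_{i}\subseteq\mathbb R$, I stack the cluster-$i$ decisions as $\mathbf x_{i,t}=col(x_{ik,t})_{k\in[n_{i}]}\in\mathbb R^{n_{i}}$ and write $J_{i}=\frac{1}{n_{i}}\bm 1_{n_{i}}\bm 1_{n_{i}}^{\top}$, so that $\|x_{ij,t}-\bar x_{i,t}\|$ is a single coordinate of $(I_{n_{i}}-J_{i})\mathbf x_{i,t}$ and hence $\|x_{ij,t}-\bar x_{i,t}\|\le\|(I_{n_{i}}-J_{i})\mathbf x_{i,t}\|=:\|\mathbf e_{i,t}\|$; it therefore suffices to bound $\sum_{t=1}^{T}\|\mathbf e_{i,t}\|$. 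In stacked form, (\ref{eq.A1_3}) becomes $\mathbf x_{i,t+1}=\mathcal P_{\Omega_{i}^{n_{i}}}\big(W_{i}(\mathbf x_{i,t}-\alpha_{t}\mathbf g_{i,t})\big)$, where $\mathbf g_{i,t}=col\big(\sum_{s\in\mathcal S_{ik,t}}y^{s}_{ik,t}\big)_{k\in[n_{i}]}$ and $\mathcal P_{\Omega_{i}^{n_{i}}}$ denotes the coordinatewise projection onto $\Omega_{i}$. The double stochasticity in Assumption \ref{assp.five} gives $J_{i}W_{i}=W_{i}J_{i}=J_{i}$, which I will use to separate the mean from the disagreement.

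The crucial structural observation is that, because every coordinate is projected onto the \emph{same} set $\Omega_{i}$, the coordinatewise projection sends consensus vectors to consensus vectors. Thus, writing $\overline y_{i,t+1}=\frac{1}{n_{i}}\bm 1_{n_{i}}^{\top}W_{i}(\mathbf x_{i,t}-\alpha_{t}\mathbf g_{i,t})$ and comparing $\mathbf x_{i,t+1}$ with the consensus vector $\mathcal P_{\Omega_{i}}(\overline y_{i,t+1})\bm 1_{n_{i}}$, nonexpansiveness of the projection together with $\|I_{n_{i}}-J_{i}\|=1$ lets me peel the projection off and obtain $\|\mathbf e_{i,t+1}\|\le\|(I_{n_{i}}-J_{i})W_{i}(\mathbf x_{i,t}-\alpha_{t}\mathbf g_{i,t})\|$. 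Using $(I_{n_{i}}-J_{i})W_{i}=W_{i}-J_{i}=(W_{i}-J_{i})(I_{n_{i}}-J_{i})$ from Assumption \ref{assp.five} then yields the recursion $\|\mathbf e_{i,t+1}\|\le\psi\|\mathbf e_{i,t}\|+\psi\alpha_{t}\|\mathbf g_{i,t}\|$, where $\psi=1-\frac{a}{4n_{i}^{2}}<1$ is the contraction factor supplied by the connectivity and uniform positivity of the weights in Assumptions \ref{assp.four}--\ref{assp.five}.

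It remains to bound the drift $\alpha_{t}\|\mathbf g_{i,t}\|$ and to sum. From Assumption \ref{assp.one}, $\|y^{s}_{ik,t}\|\le G(1+\|\mu_{ik,t}\|)$, and by Assumption \ref{assp.eight} $|\mathcal S_{ik,t}|\le c$, so $\|\mathbf g_{i,t}\|\le cG\sum_{k=1}^{n_{i}}(1+\|\mu_{ik,t}\|)$. Unrolling the recursion and summing over $t\in[T]$ collapses the geometric weights into a factor $\frac{1}{1-\psi}$, leaving $\sum_{t=1}^{T}\|\mathbf e_{i,t}\|$ controlled by $\|\mathbf e_{i,1}\|$, by $cG\sum_{t=1}^{T}\alpha_{t}\sum_{k}1=ncG\,T^{1-a_{1}}$ (the $b_{2}T^{1-a_{1}}$ term), and by $cG\sum_{t=1}^{T}\alpha_{t}\sum_{k}\|\mu_{ik,t}\|$. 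For the last sum I invoke Lemma \ref{lemm_1}: bounding $\sigma_{t}\le1$ and using $\Delta_{t-1}=\beta_{t-1}\gamma_{t-1}^{-1}=T^{a_{3}-a_{2}}$ gives $\|\mu_{ik,t}\|\le (t-1)KT^{a_{2}-a_{3}}$, so $\sum_{t}\alpha_{t}\sum_{k}\|\mu_{ik,t}\|\le nK\,T^{-a_{1}+a_{2}-a_{3}}\sum_{t=1}^{T}(t-1)=\mathcal O(nKT^{2+a_{2}-a_{1}-a_{3}})$, which is the $b_{3}T^{2+a_{2}-a_{1}-a_{3}}$ term with $b_{3}=b_{2}K$. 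The residual constant $\frac{nR}{\psi^{3}(1-\psi)}$ and the powers of $\psi$ in $b_{1},b_{2}$ arise from treating the first iterations, where the projection has not yet synchronized the agents, through the uniform bound $\|x_{ij,t}\|\le R$ of Assumption \ref{assp.one}. I expect the main obstacle to be the second step: rigorously removing the coordinatewise projection from the disagreement recursion while retaining the exact contraction factor $\psi$, and establishing $\psi=1-\frac{a}{4n_{i}^{2}}$ from connectivity and the minimal weight $a$ alone; once the recursion is in hand, the dual-variable accounting via Lemma \ref{lemm_1} and Assumption \ref{assp.eight} is routine bookkeeping.
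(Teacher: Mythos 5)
Your proposal is correct in its overall architecture but takes a genuinely different route from the paper's, and it hinges on one fact you assert without proof. The paper never forms a one-step contraction: it writes the projected update as $x_{t+1}=\widehat W(x_t-\alpha_t Y_t)+\Theta_{t+1}$ with explicit projection errors $\theta_{ij,t+1}$, unrolls this recursion all the way back to $t=1$, bounds $\sum_{j}\|\theta_{ij,t+1}\|\le 2\alpha_t\sum_{k}\sum_{s\in\mathcal S_{ik,t}}\|y^s_{ik,t}\|$ via the projection property, and then controls $\|x_{ij,t}-\bar x_{i,t}\|$ through the \emph{entrywise} geometric decay $\bigl|[W_i^t]_{jk}-\frac{1}{n_i}\bigr|\le\psi^{t-\cdot}$ of powers of doubly stochastic matrices, citing Proposition 1 of \cite{Nedic_2010}; that proposition is the sole source of the constant $\psi=1-\frac{a}{4n_i^2}$ and it holds for general (not necessarily symmetric) doubly stochastic matrices. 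Your alternative handling of the projection — observing that the coordinatewise projection onto the common scalar set $\Omega_i$ maps consensus vectors to consensus vectors, so that nonexpansiveness plus $(I_{n_i}-J_i)\bm 1_{n_i}=\bm 0_{n_i}$ lets you peel the projection off the disagreement — is valid and is cleaner than the paper's bookkeeping of $\Theta_{t+1}$; it works precisely because $\Omega_i\subseteq\mathbb R$ is the same set for every agent in the cluster, an assumption the paper also makes. The downstream accounting (the bound $\|y^s_{ik,t}\|\le G(1+\|\mu_{ik,t}\|)$, Assumption \ref{assp.eight}, Lemma \ref{lemm_1} with $\sigma_t\le 1$ and $\Delta_{t-1}=T^{a_3-a_2}$, and the geometric summation) matches the paper's and reproduces the three orders $T^{0}$, $T^{1-a_1}$, $T^{2+a_2-a_1-a_3}$.

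The gap is the step you flagged yourself: the one-step spectral-norm contraction $\|W_i-J_i\|\le\psi=1-\frac{a}{4n_i^2}$, which is a different mathematical fact from the entrywise power decay the paper invokes, and whose validity with that constant depends on symmetry. If $W_i$ is symmetric (e.g., Metropolis weights, as in the paper's simulations), the claim can be closed by a standard spectral-gap estimate: $1-\lambda_2(W_i)=\min_{x\perp\bm 1_{n_i},\|x\|=1}\frac12\sum_{j,k}[W_i]_{jk}(x_j-x_k)^2\ge\frac a2\lambda_2(L_{\mathcal G_i})\ge\frac{2a}{n_i^2}$ for a connected graph, while Gershgorin and $[W_i]_{jj}\ge a$ give $\lambda_{\min}(W_i)\ge 2a-1$, so $\max_{i\ge 2}|\lambda_i(W_i)|\le 1-\frac{a}{4n_i^2}$. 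But Assumption \ref{assp.five} requires only double stochasticity with symmetric support, not symmetric weights; in that generality $\|W_i-J_i\|=\sqrt{\lambda_2(W_i^\top W_i)}$, and running the same spectral-gap argument on $W_i^\top W_i$ (whose entries on edges are only bounded below by $a^2$) yields a contraction factor of the form $1-\mathcal O(a^2/n_i^2)$ rather than $1-\frac{a}{4n_i^2}$. Your recursion, the geometric sum, and all three $T$-orders survive under either constant, so the lemma's conclusion is recovered, but the specific values of $\psi$, $b_1$, $b_2$, $b_3$ in the statement are not reproduced unless you either add a symmetry hypothesis on $W_i$ or fall back on the paper's route through the matrix-power decay result, which delivers the stated constant without symmetry.
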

\begin{proof}
See Appendix D.
\end{proof}

\begin{lemma}\label{lemma_7}
\rm{Suppose Assumptions \ref{assp.one}-\ref{assp.ten} hold. Let $\alpha_{t}=T^{-a_{1}}$, $\beta_{t}=T^{-a_{2}}$ and $\gamma_{t}=T^{-a_{3}}$ with $0<a_{1}<1$ and $0<a_{2}<1<a_{3}<2$ for all $t \geq 0$ in Algorithm 1. Then, it holds that 
\begin{align*}
&\sum_{t=1}^{T}\left\|\bar x_{i,t}-x^{*}_{i,t}\right\|
\\&\leq \mathcal{O}\left(T^{\frac{1+u+\phi}{2}}+T^{\frac{1+a_{1}+\phi}{2}}+T^{\frac{1+\tau-a_{1}}{2}}+T^{\frac{3+a_{2}-a_{3}}{2}}\right)
\\&\quad+\mathcal{O}\left(T^{2+\frac{2a_{2}-a_{1}-2a_{3}}{2}}+T^{\frac{2+\tau+a_{2}-a_{1}-a_{3}}{2}}\right)+\sqrt{\frac{\vartheta \mathcal CV(T)T}{\sigma}}.
\end{align*}}
\end{lemma}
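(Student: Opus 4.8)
The plan is to build a one-step contraction for the squared cluster-average error $\|\bar x_{i,t}-x^{*}_{i,t}\|^{2}$ and then sum it over $t$, converting to the linear sum only at the very end by Cauchy--Schwarz. First I would exploit that $\widehat W=\mathrm{diag}(W_i)_{i\in\mathcal V}$ with each $W_i\bm1_{n_i}=\bm1_{n_i}$, so averaging the update (\ref{CAl_2}) over the $n_i$ agents of cluster $i$ commutes with the weight matrix up to the per-agent consensus gap. Using the nonexpansiveness of $\mathcal P_{\Omega_i}$ and writing $\bar x_{i,t+1}$ as a projection-averaged version of $\bar x_{i,t}-\alpha_t\,\overline y_{i,t}$, where $\overline y_{i,t}=\frac{1}{n_i}\sum_{j}\sum_{s\in\mathcal S_{ij,t}}y^{s}_{ij,t}$, I would obtain
\[
\|\bar x_{i,t+1}-x^{*}_{i,t+1}\|^{2}\le \|\bar x_{i,t}-x^{*}_{i,t}\|^{2}-2\alpha_t\langle \overline y_{i,t},\bar x_{i,t}-x^{*}_{i,t}\rangle+\mathcal E_{i,t},
\]
with $\mathcal E_{i,t}$ collecting the step-size-squared gradient norm, the intra-cluster consensus error, and the path-variation cross term $\|x^{*}_{i,t+1}-x^{*}_{i,t}\|\,\|\bar x_{i,t}-x^{*}_{i,t}\|$.

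Second, the crux is to relate the aggregated, delayed, estimation-based descent direction $\overline y_{i,t}$ to the true pseudo-gradient so that Assumption \ref{assp.two} can be invoked. Here $y^{s}_{ij,t}$ in (\ref{eq.A1_4}) is evaluated at the local estimate $\bm x^{-i}_{ij,t}$ and at the delayed index $s\in\mathcal S_{ij,t}$. I would split the cross term into (i) the exact monotone part $\langle F_t(x_t)-F_t(x^{*}_t),x_t-x^{*}_t\rangle\ge\sigma\|x_t-x^{*}_t\|^{2}$, which furnishes the contraction; (ii) the partial-information mismatch $\|\bm x^{-i}_{ij,t}-x_{-i,t}\|$, controlled through $l$-smoothness (Assumption \ref{assp.three}) and Lemma \ref{lemma_2}; (iii) the disagreement $\|x_{ij,t}-\bar x_{i,t}\|$, controlled by Lemma \ref{lemma_6}; (iv) the dual-coupling term $(\mu_{ij,t})^{\top}\nabla[g^{i,s}_{j}]_{+}$, whose magnitude is governed by the multiplier bound (\ref{eq.lemm_21}) of Lemma \ref{lemm_1}; and (v) the delay-induced discrepancy between the feedback indices $s$ and the current round $t$, absorbed by the timestamp sets $\mathcal S_{ij,t}$ and counted by $\mathcal T_{ij,T}$ and $|\mathcal U_{ij,[T]}|$. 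The variational inequality (\ref{eq.VI}) together with the KKT system (\ref{eq.KKT_2}) lets me handle the residual $\langle F_t(x^{*}_t),x_t-x^{*}_t\rangle$ against the coupled-constraint term, which is exactly where $\mathcal CV(T)$ will surface.

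Third, I would telescope the resulting inequality over $t=1,\dots,T$. Dividing by $2\sigma\alpha_t=2\sigma T^{-a_1}$ transfers a factor $T^{a_1}/\sigma$ onto every error term; the time variation of $x^{*}_t$ accumulates into $\Phi_T=\mathcal O(T^{\phi})$, the delay bookkeeping into $\mathcal O(T^{u})$ and $\mathcal O(T^{\tau})$, and the multiplier and auxiliary-variable contributions into the $a_2,a_3$-dependent powers via (\ref{eq.lemm_22}) and Lemma \ref{lemm_3}. This yields a bound on $\sum_{t=1}^{T}\|\bar x_{i,t}-x^{*}_{i,t}\|^{2}$. Finally, Cauchy--Schwarz in the form $\sum_{t=1}^{T}\|\bar x_{i,t}-x^{*}_{i,t}\|\le\sqrt{T\sum_{t=1}^{T}\|\bar x_{i,t}-x^{*}_{i,t}\|^{2}}$ introduces the overall $\sqrt T$, halves each exponent, and turns the residual constraint contribution into $\sqrt{\vartheta\,\mathcal CV(T)T/\sigma}$, matching the stated expression.

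The main obstacle I anticipate is step two: cleanly separating the genuinely contractive monotone part from the four simultaneous perturbations (partial-information estimation error, intra-cluster disagreement, growing dual multipliers, and time-varying delay), while keeping each perturbation's $T$-exponent sharp enough that every halved exponent stays strictly below one under Assumptions \ref{assp.eight}--\ref{assp.ten}. In particular, coupling the multiplier bounds of Lemma \ref{lemm_1}, which grow like $\sigma_t(t-1)/\Delta_{t-1}$, with the delay count $\mathcal T_{ij,T}$ without overcounting is the delicate accounting that drives the $a_2,a_3$-dependent terms in the final bound.
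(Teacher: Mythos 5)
Your proposal is correct and follows essentially the same route as the paper's proof: an exact one-step expansion of $\|\bar x_{i,t+1}-x^{*}_{i,t+1}\|^{2}$, substitution of the KKT stationarity condition for the optimal gradient, decomposition of the delayed aggregated direction into the monotone part (handled by Assumption \ref{assp.two}), estimation error (Lemma \ref{lemma_2}), consensus error (Lemma \ref{lemma_6}), multiplier terms (Lemma \ref{lemm_1}), and delay/path-variation bookkeeping, with the constraint violation surfacing through convexity of $g^{i,t}_{k}$ and the bound $\|\mu^{*}_{t}\|\le\vartheta$, and finally Cauchy--Schwarz to pass from the sum of squares to the linear sum. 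The only cosmetic difference is that the paper keeps the projection errors $\theta_{ik,t+1}$ explicit and applies monotonicity at the cluster averages $\bar x_{t}$ rather than at $x_{t}$, but your separate treatment of the intra-cluster disagreement covers the same gap.
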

\begin{proof}
See Appendix E.
\end{proof}
\begin{theorem}\label{thm1}
\rm{Suppose Assumptions \ref{assp.one}-\ref{assp.ten} hold. Let $\alpha_{t}=T^{-a_{1}}$, $\beta_{t}=T^{-a_{2}}$ and $\gamma_{t}=T^{-a_{3}}$ with $0<a_{1}<1-\phi$ and $0<a_{2}<1<a_{3}<2$ for all $t \geq 0$ in Algorithm 1. Then, it holds that 
\begin{align*}
&\mathcal R\left(T\right)= \mathcal{O}\left(T^{max\left\{\frac{1+u+\phi}{2},\frac{1+a_{1}+\phi}{2},\frac{1+\tau-a_{1}}{2}\right\}}\right)
\\&\quad\quad\quad +\mathcal{O}\left(T^{max\left\{\frac{3+a_{2}-a_{3}}{2},\frac{4+2a_{2}-a_{1}-2a_{3}}{2},\frac{2+\tau+a_{2}-a_{1}-a_{3}}{2}\right\}}\right),
\\&\mathcal{CV}\left(T\right)= \mathcal{O}\left(T^{max\left\{1+a_{2}-\frac{3}{2}a_{3},\frac{1}{2}-a_{3}\right\}}\right).
\end{align*}}
\end{theorem}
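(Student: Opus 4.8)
The plan is to bound the two metrics in the order $\mathcal{CV}(T)$ then $\mathcal R(T)$, because the tracking estimate of Lemma \ref{lemma_7} already carries a $\sqrt{\vartheta\,\mathcal{CV}(T)T/\sigma}$ term, so the constraint violation must be controlled by a route that does not invoke that lemma. For the constraint violation I would run a saddle-point argument on the dual recursion \dref{CAl_5}: expanding $\|\mu_{t+1}-\mu\|^2$ through the nonexpansiveness of $[\cdot]_+$ against an arbitrary test multiplier $\mu\in\mathbb R^m_{\ge0}$, telescoping in $t$ with $\mu_{ij,1}=\bm 0_m$, and discarding the aggregated consensus increments via $\bm 1_n^\top\overline L=\bm 0$ yields an inequality of the same form as Lemma \ref{lemm_3}, pairing the accumulated feedback violation positively with $\mu$ against the penalty $-\tfrac{n}{2}\|\mu\|^2$. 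Its right-hand side is controlled by the uniform dual and auxiliary bounds of Lemma \ref{lemm_1} and, under $\alpha_t=T^{-a_1}$, $\beta_t=T^{-a_2}$, $\gamma_t=T^{-a_3}$ with $\sigma_t=4^{t-T}$ keeping the weighted sums $\mathcal O(1)$, is of order $T^{\max\{2+2a_2-3a_3,\,1-2a_3\}}$. Choosing $\mu$ along the accumulated-violation direction and completing the square gives $\mathcal{CV}(T)\le\sqrt{2n\cdot\mathrm{RHS}}$, and the square root halves the exponent to the stated $\mathcal O(T^{\max\{1+a_2-\frac32 a_3,\,\frac12-a_3\}})$; the reindexing from the delayed sums $\sum_{s\in\mathcal S_{ij,t}}$ to the unweighted $\sum_t$ of the definition is absorbed into a sublinear correction governed by Assumptions \ref{assp.eight}--\ref{assp.ten}.

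For the regret I would invoke convexity of $f^{i,t}_j$ in its first argument (Assumption \ref{assp.one}) to linearize each summand, $f^{i,t}_j(x_{ij,t},x^*_{-i,t})-f^{i,t}_j(x^*_{i,t},x^*_{-i,t})\le \langle\nabla_{x_{ij,t}}f^{i,t}_j(x_{ij,t},x^*_{-i,t}),\,x_{ij,t}-x^*_{i,t}\rangle$, then apply Cauchy--Schwarz with the gradient bound $\|\nabla_{x_{ij,t}}f^{i,t}_j\|\le G$ to obtain $\mathcal R(T)\le G\sum_{t,i,j}\|x_{ij,t}-x^*_{i,t}\|$. Splitting by the triangle inequality, $\|x_{ij,t}-x^*_{i,t}\|\le\|x_{ij,t}-\bar x_{i,t}\|+\|\bar x_{i,t}-x^*_{i,t}\|$, the intra-cluster consensus part is handled by Lemma \ref{lemma_6} and the cluster-to-vGNE tracking part by Lemma \ref{lemma_7}; the outer sum over the finite set $i\in\mathcal V$, $j\in[n_i]$ contributes only constant factors. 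Notably the estimation errors of the partial-decision setting do not surface here explicitly, since the comparator gradient is evaluated at the true $x^*_{-i,t}$ and is uniformly bounded by $G$; those errors are already internalized in Lemmas \ref{lemma_2}, \ref{lemma_6} and \ref{lemma_7}.

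It then remains to assemble the exponents. Substituting the just-obtained $\mathcal{CV}(T)$ bound into the $\sqrt{\vartheta\,\mathcal{CV}(T)T/\sigma}$ term of Lemma \ref{lemma_7} gives $T^{1+\frac{a_2}{2}-\frac34 a_3}$, which is dominated by $T^{(3+a_2-a_3)/2}$; likewise the two exponents $T^{1-a_1}$ and $T^{2+a_2-a_1-a_3}$ of Lemma \ref{lemma_6} are dominated by $T^{(1+\tau-a_1)/2}$ (using $\tau\ge1$) and by $T^{(4+2a_2-a_1-2a_3)/2}$ respectively. Taking the maxima within the two groups left over from Lemma \ref{lemma_7} reproduces exactly the two-term bound claimed for $\mathcal R(T)$.

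The step I expect to be the main obstacle is the two-way coupling between the metrics together with the delayed-feedback bookkeeping. Concretely, the constraint violation must be extracted from the saddle-point inequality independently of the tracking error, and one must then verify that reinserting $\sqrt{\mathcal{CV}(T)T}$ does not raise the regret order; the delicate part is aligning the weighted, reindexed feedback sums $\sum_t\sum_{s\in\mathcal S_{ij,t}}[g^{i,s}_j(x_{ij,t})]_+$ produced by the algorithm with the unweighted accumulated quantities in the definitions of $\mathcal R(T)$ and $\mathcal{CV}(T)$, which is where the exponential schedule $\sigma_t=4^{t-T}$ and Assumptions \ref{assp.eight}--\ref{assp.ten} do the real work, and where one must finally confirm, under $0<a_1<1-\phi$ and $0<a_2<1<a_3<2$, which exponent attains each maximum.
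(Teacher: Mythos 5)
Your proposal is correct and follows essentially the same route as the paper: the paper likewise bounds $\mathcal{CV}(T)$ first by instantiating Lemma~\ref{lemm_3} at the given step sizes, choosing the test multiplier along the accumulated-violation direction and completing the square to obtain $\mathcal{CV}(T)\le\sqrt{2n\cdot\mathrm{RHS}}$, and then bounds $\mathcal R(T)$ by convexity as $G\sum_{i,j}\big(\sum_{t}\|x_{ij,t}-\bar x_{i,t}\|+\sum_{t}\|\bar x_{i,t}-x^{*}_{i,t}\|\big)$ via Lemmas~\ref{lemma_6} and~\ref{lemma_7}, re-inserting the $\mathcal{CV}(T)$ bound into the $\sqrt{\vartheta\mathcal{CV}(T)T/\sigma}$ term. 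Your exponent bookkeeping (dominance of the CV-induced term by $T^{(3+a_{2}-a_{3})/2}$, and of Lemma~\ref{lemma_6}'s exponents by the stated maxima using $\tau\ge 1$) matches how the paper assembles the final bounds.
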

\begin{proof}
See Appendix F.
\end{proof}

\begin{remark}
\rm{Theorem \ref{thm1} implies that the system-wise regret $\mathcal R(T)$ and the constraint violation $\mathcal CV(T)$ can grow sublinearly under the condition that the parameters $a_{1}$, $a_{2}$ and $a_{3}$ satisfy the following constraints:
\begin{align*}
\begin{cases}
u+\phi<1,
\\\phi+a_{1}<1,
\\\tau-a_{1}<1,
\\1+a_{2}-a_{3}<0,
\\2+2a_{2}-a_{1}-2a_{3}<0,
\\\tau+a_{2}-a_{1}-a_{3}<0.
\end{cases}
\end{align*} 

Clearly, if $a_{2}$ is small enough to approach $0$ and $a_{3}$ is sufficiently large, the above rules can easily be satisfied. Meanwhile, if $a_{1}$ becomes large, then $\phi+a_{1}$ will increase and $\tau-a_{1}$ will decrease. Therefore, determining an appropriate value for $a_{1}$ that balances the relationship between the above terms involves a tradeoff. Moreover, if the vGNE at each time fluctuates violently, then $\phi$ will be close to $1$. In this case, $\tau$ must approach $1$, because if not then $a_{1}$ will not exist.}
\end{remark}

\begin{corollary}\label{col_1}
\rm{Suppose Assumptions \ref{assp.one}-\ref{assp.ten} hold. Let 
\begin{align*}
\alpha_{t}=\frac{1}{T^{\frac{3}{4}}},~\beta_{t}=\frac{1}{T^{\frac{1}{4}}},~\gamma_{t}=\frac{1}{T^{\frac{3}{2}}},
\end{align*}
for all $t \geq 0$ in Algorithm 1, and set the parameters mentioned in Assumptions \ref{assp.nine} and \ref{assp.ten} as $u=\frac{1}{4}$, $\phi=\frac{1}{16}$ and $\tau=1$. Then, it holds that  
\begin{align*}
\mathcal R\left(T\right)=\mathcal{O}\left(T^{\frac{29}{32}}\right),~\mathcal{CV}\left(T\right)=\mathcal{O}\left(T^{-1}\right).
\end{align*}}
\end{corollary}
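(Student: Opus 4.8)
The plan is to apply Theorem \ref{thm1} directly, since this corollary is nothing more than the specialization of that theorem's general bounds to the concrete step-size exponents $a_1 = \frac{3}{4}$, $a_2 = \frac{1}{4}$, $a_3 = \frac{3}{2}$, paired with the delay parameters $u = \frac{1}{4}$, $\phi = \frac{1}{16}$, $\tau = 1$. The entire argument reduces to verifying admissibility of the chosen exponents and then evaluating a handful of rational expressions.

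First I would confirm that the hypotheses of Theorem \ref{thm1} are met, namely $0 < a_1 < 1 - \phi$ and $0 < a_2 < 1 < a_3 < 2$. Both hold here, since $a_1 = \frac{12}{16} < \frac{15}{16} = 1 - \phi$ and $0 < \frac{1}{4} < 1 < \frac{3}{2} < 2$. I would also check the six sublinearity inequalities collected in the Remark following Theorem \ref{thm1}; each follows immediately from the same substitution, for instance $u + \phi = \frac{5}{16} < 1$, $\phi + a_1 = \frac{13}{16} < 1$, $\tau - a_1 = \frac{1}{4} < 1$, $1 + a_2 - a_3 = -\frac{1}{4} < 0$, and $\tau + a_2 - a_1 - a_3 = -1 < 0$.

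Next I would evaluate each exponent in the regret bound over a common denominator of $32$. In the first bracket the three candidates are $\frac{1+u+\phi}{2} = \frac{21}{32}$, $\frac{1+a_1+\phi}{2} = \frac{29}{32}$, and $\frac{1+\tau-a_1}{2} = \frac{20}{32}$, with maximum $\frac{29}{32}$; in the second bracket they are $\frac{3+a_2-a_3}{2} = \frac{28}{32}$, $\frac{4+2a_2-a_1-2a_3}{2} = \frac{12}{32}$, and $\frac{2+\tau+a_2-a_1-a_3}{2} = \frac{16}{32}$, with maximum $\frac{28}{32}$. Since $\frac{29}{32} > \frac{28}{32}$, the overall regret order is $\mathcal R(T) = \mathcal{O}(T^{29/32})$. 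For the constraint violation, both candidate exponents $1 + a_2 - \frac{3}{2}a_3$ and $\frac{1}{2} - a_3$ evaluate to $-1$, yielding $\mathcal{CV}(T) = \mathcal{O}(T^{-1})$.

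Being a direct corollary, the statement presents no genuine conceptual difficulty; the only care required is arithmetic bookkeeping. The one point I would double-check is the comparison of maxima across the two regret brackets: the dominant term $\frac{29}{32}$ exceeds the runner-up $\frac{28}{32}$ by the single margin $\frac{1}{32}$, so the two contributions must be compared exactly rather than estimated, and a slip in reducing any one of the six exponents to the common denominator could misidentify the leading order.
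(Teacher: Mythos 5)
Your proposal is correct and follows exactly the paper's approach: the paper's own proof is a one-line substitution of the chosen parameter values into Theorem \ref{thm1}, which is precisely what you carry out (with the arithmetic made explicit and verified, including the decisive comparison $\tfrac{29}{32}>\tfrac{28}{32}$ and the coincidence of both constraint-violation exponents at $-1$).
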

\begin{proof}
Substituting the corresponding values of the parameters into Theorem \ref{thm1}, the proof is complete.
\end{proof}

\begin{remark}
\rm{It can be seen from Theorem \ref{thm1} that the parameters $\tau$ and $u$ have effect on the convergence of  the system-wise $\frac{\mathcal R(T)}{T}$, but not $\frac{\mathcal CV(T)}{T}$. The upper bound $c$ affects both convergence rates of $\frac{\mathcal CV(T)}{T}$ and $\frac{\mathcal R(T)}{T}$. Moreover, time-varying delays affect both of the convergence of $\frac{\mathcal CV(T)}{T}$ and $\frac{\mathcal R(T)}{T}$, which further confirms our conclusion.} 
\end{remark}

\section{Numerical simulations}\label{sec.four}

In this section, a numerical example is simulated to demonstrate the effectiveness of the proposed distributed online delay-tolerant GNE learning algorithm. The performances of the algorithm with different types of delays are tested. All agents communicate over graph $\mathcal G$ in Fig. \ref{fig_1} (Note that each agent has self-loop, which is hidden for simplicity). Specifically, agent $1$, agent $2$ and agent $3$ are in the first cluster, agent $4$, agent $5$ and agent $6$ are in the second cluster and agent $7$ alone in the third cluster. Furthermore, cluster $1$, cluster $2$ and cluster $3$ communicate through $\mathcal G_{0}$, which  is composed of agent $3$, agent $4$ and agent $7$. Obviously, $N=3,~n=7,~n_{1}=3,~n_{2}=3,~n_{3}=1$.

\begin{figure}[H]
\centering
\centerline{\includegraphics[scale=0.7]{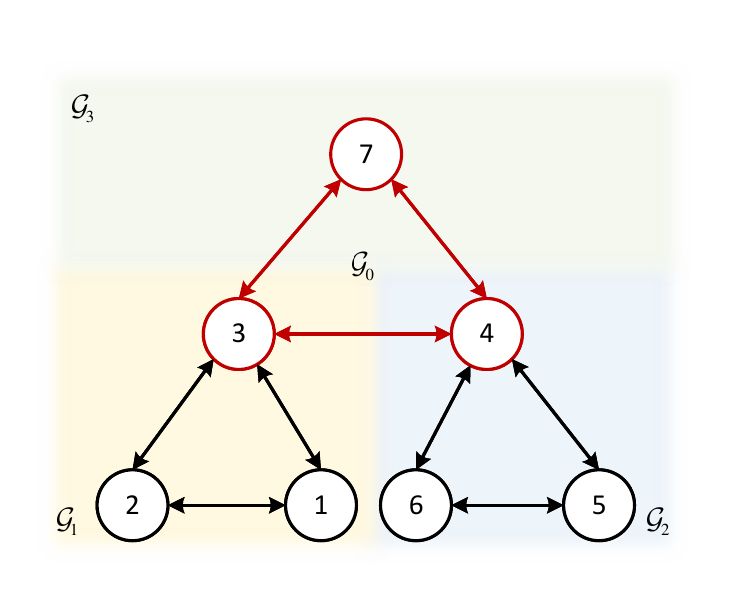}}
\caption{The communication network of the online multi-cluster game.}\label{fig_1}
\end{figure}

Let $h_{1}(t)=\left|sin(0.005t)\right|$ and the cost functions of the agents in each cluster be given as
\begin{align*} 
&f^{1,t}_{1}(x_{t})=20h_{1}(t)(x_{11,t}-5t^{-1})^2+x_{31,t}, 
\\&f^{1,t}_{2}(x_{t})=20h_{1}(t)(x_{12,t}-5t^{-1})^2-x_{21,t}+x_{22,t}^2,
\\&f^{1,t}_{3}(x_{t})=15h_{1}(t)(x_{13,t}-5t^{-1})^2+x_{22,t}(x_{22,t}-x_{21,t}),
\\&f^{2,t}_{1}(x_{t})=20h_{1}(t)(x_{21,t}-5t^{-1})^2+x_{11,t},
\\&f^{2,t}_{2}(x_{t})=10h_{1}(t)(x_{22,t}-5t^{-1})^2+x_{31,t}(x_{12,t}+x_{13,t}),
\\&f^{2,t}_{3}(x_{t})=20h_{1}(t)(x_{23,t}-5t^{-1})^2,
\\&f^{3,t}_{1}(x_{t})=20h_{1}(t)(x_{31,t}-5t^{-1})^2+(x_{11,t}+x_{23,t}).
\end{align*} 

Moreover, let $h_{2}(t)=sin^{2}(10^{-3}t)$ and the time-varying constraint functions $g^{i,t}_{j}(x_{ij,t})$ be given as 
\begin{align*}
g^{i,t}_{j}(x_{ij,t})=col\left(0.5(x_{ij,t}^2-mh_{2}(t))-10^{2}e^{-(i+j-1)}\right)_{m\in[6]}.
\end{align*}

The parameters of the online algorithm are designed as follows: $T=5000,~\alpha_{t}=t^{-0.98},~\beta_{t}=T^{-0.02},~\gamma_{t}=T^{-1.5}$ and $\Omega_{i}=\left[-10,10\right]$. The initial values of $x_{ij,1}$ and $\bm x^{ph}_{ij,1}$ are set as $10$. The doubly stochastic matrix $W$ and $W_{i}$ are conducted by using Metropolis weights \cite{Koshal_2016}. 

Then, consider two types of delays: constant delays and time-varying delays. In the first case, all agents have the same delay and they cannot receive any information of their own cost functions and constraint functions before time $t_{0}$. When $t>t_{0}$, the feedback information at $t-t_{0}$ is received. For more details on the above case, see \cite{Cao_2021}. Furthermore, six constant values of $t_{0}$: 0, 10, 20, 40, 60, 80 are tested to verify the influences of different constant delays on the convergence property of the online algorithm and the performances are evaluated by $\frac{\mathcal R(T)}{T}$ and $\frac{\mathcal CV(T)}{T}$ in Fig. \ref{fig_2} and Fig. \ref{fig_3}, respectively. It can be seen that the longer the delays are, the slower the $\frac{\mathcal R(T)}{T}$ converges, and the same is true for $\frac{\mathcal CV(T)}{T}$. Fig. \ref{fig_4} shows the trajectory $\frac{\mathcal R_{ij}(T)}{T}$ of each agent $j$ in cluster $i$ with $t_{0}=10$, which demonstrates that the regret of each agent grows sublinearly since $\frac{\mathcal R_{ij}(T)}{T}$ converges to $0$.

\begin{figure}
\centering
\centerline{\includegraphics[width=0.55\textwidth]{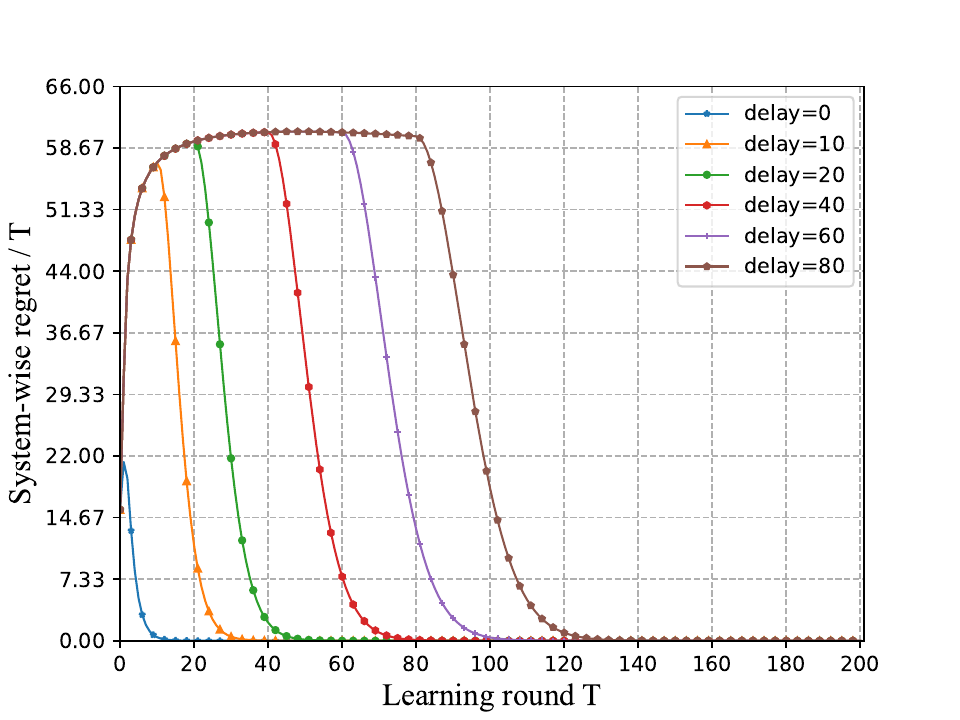}}
\caption{The trajectories of $\mathcal R(T)/T$ with different constant delays.}\label{fig_2}
\end{figure}

\begin{figure}
\centering
\centerline{\includegraphics[width=0.55\textwidth]{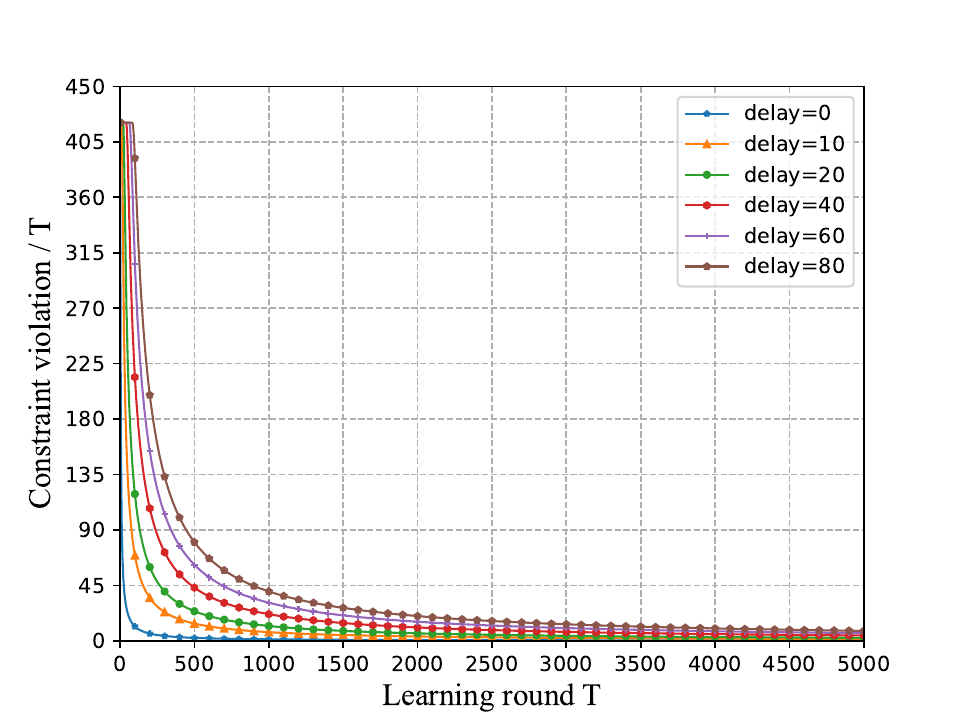}}
\caption{The trajectories of $\mathcal CV(T)/T$ with different constant delays.}\label{fig_3}
\end{figure}

\begin{figure}
\centering
\centerline{\includegraphics[width=0.55\textwidth]{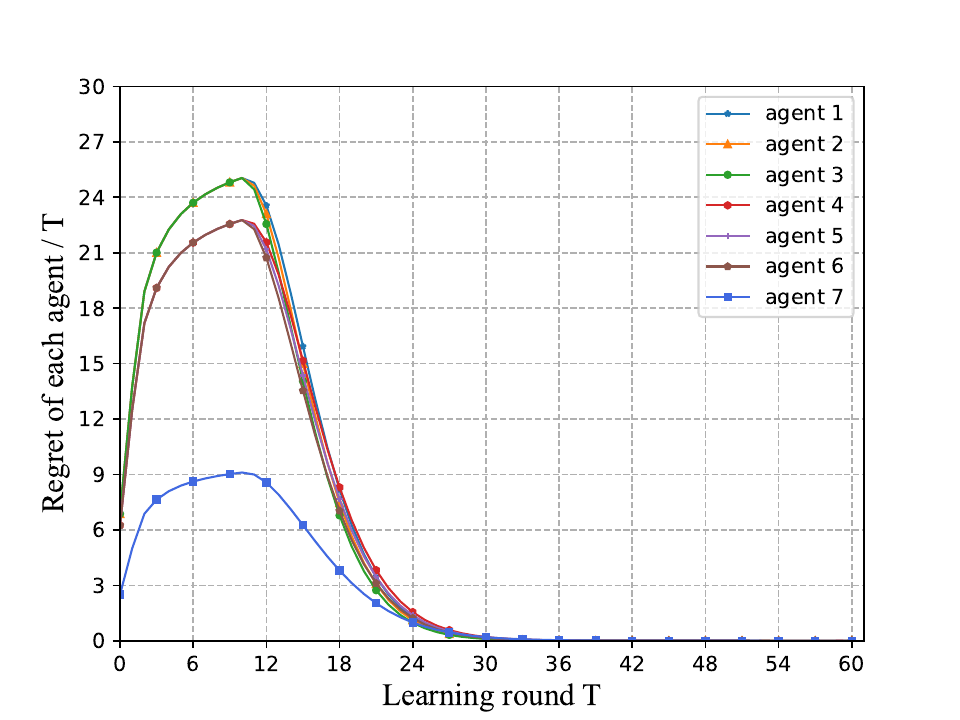}}
\caption{The trajectories of $\mathcal R_{ij}(T)/T$ for each agent with delay $t_{0}=10$.}\label{fig_4}
\end{figure}

In the second case, the delays can change over time and three types of delays are tested, respectively. 

Type 1: This provides feedback information for a historical time period on regular intervals. Formally, let $S_{1}(t_{1})=\left\{t\in[T]|mod(t,t_{1})=1, t\neq 1\right\}$ with $t_{1}\neq 0$, and the set of feedback timestamps $\mathcal S_{ij,t}$ be defined as
\begin{equation}\label{e1}
\mathcal S_{ij,t}=\left\{
\begin{aligned}
\emptyset,\quad &t \notin S_{1}(t_{1}),\\
\cup_{i=1}^{t_{1}}\left\{t-i\right\},\quad &t \in S_{1}(t_{1}).\\
\end{aligned}
\right.
\end{equation}

Type 2: This presents feedback information at certain times and the interval period increases linearly with $T$. Let $S_{2}(t_{2})=\big\{t\in[T]|\exists~t_{3} \in [T],s.t.~t=(\sum_{i=1}^{t_{3}}i)t_{2}+1, t\neq 1\big\}$ and $s(t,t_{2})=\big\{t_{2}t_{3}|t=(\sum_{i=1}^{t_{3}}i)t_{2}+1\big\}$, $\forall t \in S_{2}(t_{2})$, where $t_{2}\neq 0$, and the set of feedback timestamps $\mathcal S_{ij,t}$ be defined as
\begin{equation}\label{e2}
\mathcal S_{ij,t}=\left\{
\begin{aligned}
\emptyset,\quad &t \notin S_{2}(t_{2}),\\
\cup_{i=1}^{s(t,t_{2})}\left\{t-i\right\},\quad &t \in S_{2}(t_{2}).\\
\end{aligned}
\right.
\end{equation}

Type 3: The main characteristic of this type is that the delays of all agents are different. Let $t_{4}=2i+j$, and the set of feedback timestamps $\mathcal S_{ij,t}$ be defined as
\begin{equation}\label{e3}
\mathcal S_{ij,t}=\left\{
\begin{aligned}
\emptyset,\quad &1\leq t\leq10t_{4},\\
\left\{t-10t_{4},t\right\},\quad &10t_{4}<t\leq20t_{4},\\
\left\{t\right\},\quad &t>15t_{4}.\\
\end{aligned}
\right.
\end{equation}

It can be verified that $\left|\mathcal S_{ij,t}\right|$, $\left|\mathcal U_{ij,[T]}\right|$ and $\mathcal T_{ij,T}$ for each agent $j$ in cluster $i$ with delays of type 1 and type 3 satisfy Assumptions \ref{assp.eight}-\ref{assp.ten}. But $\left|\mathcal S_{ij,t}\right|$ and $\left|\mathcal U_{ij,[T]}\right|$ with delays of type 2 are linear with $T$, and $\mathcal T_{ij,T}$ is quadratic with $T$, i.e., $\mathcal T_{ij,T}=\mathcal O(T^{2})$, which clearly violates Assumptions \ref{assp.eight}-\ref{assp.ten}. Let $t_{1},t_{2}=30$, and then test the performance of the algorithm for the network with time-varying delays of the three types. 

Fig. \ref{fig_6} and Fig. \ref{fig_7} respectively show the trajectories of $\mathcal R(T)/T$ and $\mathcal CV(T)/T$, from which it can be observed that the trajectories with delays of type 1 and type 3 converge to $0$. However, the trajectory of $\mathcal R(T)/T$ with delays of type 2 is fluctuating near $0$ as can be seen more clearly from Fig. \ref{fig_7} where the trajectory of $\mathcal CV(T)/T$ with delays of type 2 tends to a non-zero constant as $T$ increases. All these mean that the decisions generated by the online algorithm with delays of type 2 oscillate around the time-varying vGNE as $T$ increases. Interestingly, it leads us to ponder whether it is possible to design an effective algorithm to ensure that $\mathcal R(T)/T$ and $\mathcal CV(T)/T$ converge to $0$ when $\tau\geq2$ in Assumption \ref{assp.ten}.

\begin{figure}[H]
\centering
\centerline{\includegraphics[width=0.55\textwidth]{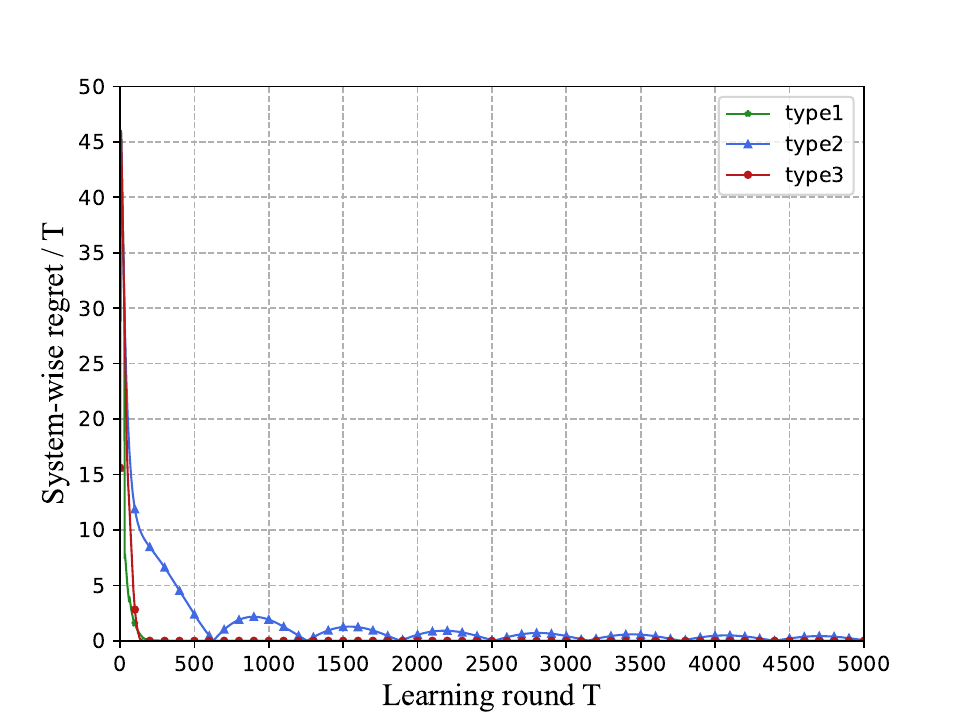}}
\caption{The trajectories of $\mathcal R(T)/T$ with different types of time-varying delays.}\label{fig_6}
\end{figure}

\begin{figure}[H]
\centering
\centerline{\includegraphics[width=0.55\textwidth]{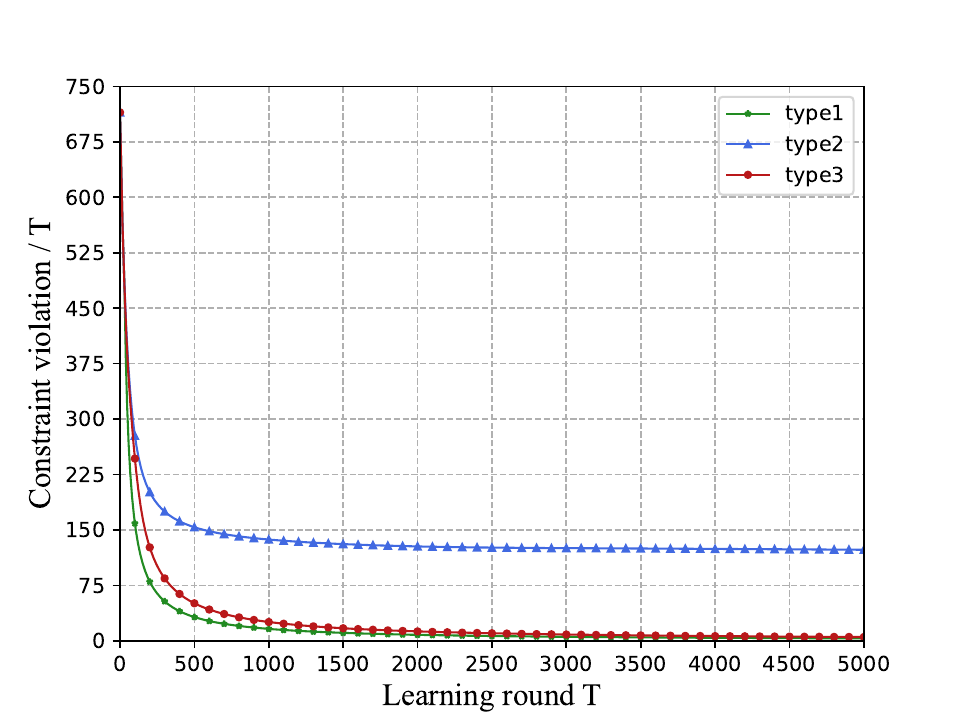}}
\caption{The trajectories of $\mathcal CV(T)/T$ with different types of time-varying delays.}\label{fig_7} 
\end{figure}

\section{Conclusion}\label{sec.five}
In this paper, the distributed online GNE learning problem for multi-cluster games with delayed information feedback and partial-decision information is investigated. A distributed online delay-tolerant GNE learning algorithm is developed. The algorithm is designed within the primal-dual framework, which incorporates an aggregation gradient mechanism, allowing for execution with partial-decision information. To evaluate the performance of the proposed algorithm, metrics such as system-wide regret and constraint violation are formulated. It is demonstrated that these metrics increase sublinearly with respect to time, under some weak conditions. The future work will focus on solving the distributed online GNE learning of multi-cluster game with bandit feedback and time-varying clusters. 
\vspace{1.5ex}
\begin{appendix}
\subsection{Proof of Lemma 1}
By Gerschgorin's disk theorem \cite{Horn_2013} and since the diagonal elements of $W^{-}_{ij}$ are positive, it can be verified that $-1<\lambda\left(W^{-}_{ij}\right)\leq1$. Furthermore, $I_{n-1}-W^{-}_{ij}$ are positive definite by Assumption \ref{assp.four} and Asumption \ref{assp.five}\cite{Hong_2006}. Thus, one has  $|\lambda\left(W^{-}_{ij}\right)|<1$.

Define $e^{ij}_{t}=col\left(e^{ij}_{11,t},\cdots,e^{ij}_{i(j-1),t},e^{ij}_{i(j+1),t},\cdots,e^{ij}_{Nn_{N},t}\right)$ with $e^{ij}_{pq,t}=\bm x^{ij}_{pq,t}-x_{ij,t}$. Similar to Lemma $2$ in \cite{Lu_2021}, it follows from (\ref{eq.A1_2}) that
\begin{align*}
\left\|e^{ij}_{t+1}\right\|&\leq \left\|W^{-}_{ij}\right\|\left\|e^{ij}_{t}\right\|+\left\| (x_{ij,t+1}-x_{ij,t})\bm1_{n-1}\right\|.
\end{align*}

From (\ref{eq.A1_3}), it follows that
\begin{align*}
&\left\| x_{ij,t+1}-x_{ij,t}\right\|
\\&\leq \left\|\mathcal P_{\Omega_{i}}\bigg(\sum_{k=1}^{n_{i}}\left[  W_{i}\right]_{jk}\bigg(x_{ik,t}-\alpha_{t}\sum_{s \in \mathcal S_{ik,t}} y^{s}_{ik,t}\bigg)\bigg)-x_{ij,t}\right\|
\\&\leq \left\|\sum_{k=1}^{n_{i}}\left[  W_{i}\right]_{jk}\bigg(x_{ik,t}-x_{ij,t}-\alpha_{t}\sum_{s \in \mathcal S_{ik,t}} y^{s}_{ik,t}\bigg)\right\|
\\&\leq \sum_{k=1}^{n_{i}}\left\|x_{ik,t}-x_{ij,t}\right\|+G\sum_{k=1}^{n_{i}}\sum_{s \in \mathcal S_{ik,t}}\alpha_{t}(1+\left\|\mu_{ik,t}\right\|),
\end{align*}
where the nonexpansiveness of the Euclidean projection operator and the doubly stochasticity of the mixing matrix $W_{i}$ has been used in deriving the second inequality.
Thus, one has
\begin{align*}
\left\|e^{ij}_{t+1}\right\|&\leq \lambda^{-}\left\|e^{ij}_{t}\right\|+\sqrt{n}\sum_{k=1}^{n_{i}}\left\|x_{ik,t}-x_{ij,t}\right\|
\\&\quad+\sqrt{n}G\sum_{k=1}^{n_{i}}\sum_{s \in \mathcal S_{ik,t}}\alpha_{t}(1+\left\|\mu_{ik,t}\right\|).
\end{align*}

By recursion, one obtains
\begin{align*}
&\left\|e^{ij}_{t+1}\right\|
\\&\leq\left(\lambda^{-}\right)^{t}\left\|e^{ij}_{1}\right\|+\sqrt{n}\sum_{s=0}^{t-1}(\lambda^{-})^{s}\sum_{k=1}^{n_{i}}\left\|x_{ik,t-s}-x_{ij,t-s}\right\|
\\&\quad+\sqrt{n}G\sum_{s=0}^{t-1}(\lambda^{-})^{s}\sum_{k=1}^{n_{i}}\sum_{s\in \mathcal S_{ij,t-s}}\alpha_{t-s}(1+\left\|\mu_{ik,t-s}\right\|).
\end{align*}

Without loss of generality, the bounds of $\bm x^{ij}_{pq,1}$ and $x_{ij,1}$ are assumed to be $R$. Then, it is easy to verify  that $\left\|e^{ij}_{1}\right\|\leq 2\sqrt{n}R$. Thus, for any $i,p \in \mathcal V$, $j \in [n_{i}]$ and $q \in [n_{p}]$, one obtains
\begin{align*}
&\sum_{t=1}^{T}\left\|\bm x^{ij}_{pq,t}-x_{ij,t}\right\|
\\&\leq \sum_{t=1}^{T}\left\|e^{ij}_{t}\right\|
\\&\leq\frac{4\sqrt{n}R}{1-\lambda^{-}}+\frac{\sqrt{n}}{1-\lambda^{-}}\sum_{k=1}^{n_{i}}\sum_{t=1}^{T}\left\|x_{ik,t}-x_{ij,t}\right\|
\\&\quad+\frac{\sqrt{n}G}{1-\lambda^{-}}\sum_{k=1}^{n_{i}}\sum_{t=1}^{T}\sum_{s\in \mathcal S_{ij,t}}\alpha_{t}(1+\left\|\mu_{ik,t}\right\|).
\end{align*}

The proof is complete.

\subsection{Proof of Lemma 2}
If $t=1$, by Algorithm $1$, one has $\mu_{ij,1}=\bm0_{m}$ and $z_{ij,1}=\bm0_{m}$, thus the inequality (\ref{eq.lemm_21}) and (\ref{eq.lemm_22}) hold naturally. Next, it will be shown that equations (19) and (20) hold at time $t>1$ as well by induction. Assume that (\ref{eq.lemm_21}) and (\ref{eq.lemm_22}) hold at time $t$. Then,  by (\ref{eq.A1_8}), (\ref{eq.A1_7}) and the nonexpansive property of the Euclidean projection operator, it follows immediately that
\begin{equation}\label{eq.lemm_24}
\begin{aligned}
\left\|\mu_{ij,t+1}\right\|&\leq\left\|\mu_{ij,t}\right\|+\gamma_{t}\sum_{q=1}^{N}\sum_{k=1}^{n_{q}} [W]^{qk}_{ij}\left(\left\|z_{ij,t}\right\|+\left\|z_{qk,t}\right\|\right)
\\&\quad+\sigma_{t}\gamma_{t}tK.
\end{aligned}
\end{equation}

Substituting (\ref{eq.lemm_21}) and (\ref{eq.lemm_22}) into (\ref{eq.lemm_24}) gives 
\begin{align*}
\left\|\mu_{ij,t+1}\right\|&\leq\frac{\sigma_{t}(t-1)K}{\Delta_{t-1}}+\frac{2\sigma_{t}\gamma_{t}(t-1)K}{\Delta_{t-1}}+\sigma_{t}\gamma_{t}tK
\\&\overset{(a)}{\leq}\frac{((1+2\gamma_{t})\gamma_{t-1}+\gamma_{t})\sigma_{t}tK}{\beta_{t-1}}
\\&\overset{(b)}{\leq}\frac{4\sigma_{t}\gamma_{t-1}tK}{\beta_{t-1}}
\\&\overset{(c)}{\leq}\frac{\sigma_{t+1}tK}{\Delta_{t}},
\end{align*}
where $(a)$ follows from dropping the negative terms; $(b)$ implies from Assumption \ref{assp.seven} that $\gamma_{t}$ and $\beta_{t}$ are nonincreasing; and $(c)$ is derived from Assumption \ref{assp.seven}(b). 

Similarly, by (\ref{eq.A1_6}), one has
\begin{equation}\label{eq.lemm_25}
\begin{aligned}
\left\|z_{ij,t+1}\right\|\leq\left\|z_{ij,t}\right\|+\beta_{t}\sum^{N}_{q=1}\sum^{n_{q}}_{k=1}[W]^{qk}_{ij}(\left\|\mu_{ij,t}\right\|+\left\|\mu_{qk,t}\right\|).
\end{aligned}
\end{equation}

Submitting (\ref{eq.lemm_21}) and (\ref{eq.lemm_22}) into (\ref{eq.lemm_25}) yields
\begin{align*}
\left\|z_{ij,t+1}\right\|&\leq\frac{\sigma_{t}(t-1)K}{\Delta_{t-1}}+2\beta_{t}\frac{\sigma_{t}(t-1)K}{\Delta_{t-1}}
\\&\overset{(a)}{\leq}\frac{(1+2\beta_{1})\sigma_{t}\gamma_{t-1}tK}{\beta_{t-1}}
\\&\overset{(b)}{\leq}\frac{\sigma_{t+1}tK}{\Delta_{t}},
\end{align*}
where $(a)$ results from dropping the negative terms and the nonincreasing property of $\beta_{t}$, and $(b)$ follows from the condition in Assumption \ref{assp.seven}. In summary, the relationships described by (\ref{eq.lemm_21}) and (\ref{eq.lemm_22}) are established.

In the following, it will be shown that $\left\|\mu_{ij,t}-\bar{\mu}_{t}\right\|$  is bounded according to the bounds of $\mu_{ij,t}$ in (\ref{eq.lemm_21}).
First, it follows from (\ref{CAl_5}) that
\begin{equation}\label{eq.lemm_26}
\mu_{t+1}=(W\otimes I_{m})\mu_{t}+\epsilon_{t},
\end{equation}
where $\epsilon_{t}$ is equal to $\left[\mu_{t}+\gamma_{t}\left(\left(L \otimes I_{m}\right)z_{t}-\sigma_{t}{G}_{t}\right)\right]_{+}-\left(W\otimes I_{m}\right)\mu_{t}$.
Furthermore, one has
\begin{equation}\label{eq.lemm_27}
\begin{aligned}
\left\|\epsilon_{t}\right\|&\leq\sqrt{\sum_{s=1}^{n}{\left\|[\epsilon_{t}]_{s}\right\|}^{2}}
\\&\leq\sqrt{n}\left(\left(1+\gamma_{t}\right)\frac{2\sigma_{t}\gamma_{t-1}\left(t-1\right)K}{\beta_{t-1}}+\sigma_{t}\gamma_{t}tK\right)
\\&\leq\frac{5\sqrt{n}\sigma_{t}\gamma_{t-1}tK}{\beta_{t-1}},
\end{aligned}
\end{equation}
where the fact that $\|L\|\le 2$ has been used in deriving the second inequality of (27) by noticing that $W$ is doubly stochastic. 

Combining (\ref{eq.lemm_26}) and (\ref{eq.lemm_27}), one has
\begin{align*}
&\left\|\mu_{t+1}-\bm1_{n}\otimes\bar{\mu}_{t+1}\right\|
\\&\leq\bigg\|((I-\frac{1}{n}\bm1_{n}\bm1^{T}_{n})\otimes I_{m})\epsilon_{t}\bigg\|
+\bigg\|((W-\frac{1}{n}\bm1_{n}\bm1^{T}_{n})\otimes I_{m})
\\&\quad\quad(\mu_{t}-\bm1_{n}\otimes\bar{\mu}_{t})\bigg\|
\\&\leq\lambda\left\|\mu_{t}-\bm1_{n}\otimes\bar{\mu}_{t}\right\|+\frac{5\sqrt{n}\sigma_{t}\gamma_{t-1}tK}{\beta_{t-1}}
\\&\leq\sum_{s=0}^{t-1}\frac{5\sqrt{n}\lambda^{s}\sigma_{t-s}\gamma_{t-s-1}(t-s)K}{\beta_{t-s-1}}.
\end{align*}

The proof is complete.

\subsection{Proof of Lemma 3}
By the iteration of $\mu_{ij,t+1}$ in (\ref{eq.A1_7}), for any $\mu \in \mathbb R^{m}_{\geq 0}$, one obtains 
\begin{align*}
&\left\|\mu_{ij,t+1}-\mu\right\|^{2}
\\&\leq\left\|\mu_{ij,t}-\mu\right\|^{2}+\gamma_{t}^{2}\bigg\|\widetilde{z}_{ij,t}-\sigma_{t}\sum_{s \in \mathcal S_{ij,t}} \left[g^{i,s}_{j}\left(x_{ij,t}\right)\right]_{+}\bigg\|^{2}
\\&\quad+2\gamma_{t}\left<\mu-\mu_{ij,t},\sigma_{t}\sum_{s \in \mathcal S_{ij,t}} \left[g^{i,s}_{j}\left(x_{ij,t}\right)\right]_{+}\right>
\\&\quad+2\gamma_{t}\left<\mu_{ij,t}-\bar{\mu}_{t},\widetilde{z}_{ij,t}\right>+2\gamma_{t}\left<\bar{\mu}_{t}-\mu,\widetilde{z}_{ij,t}\right>
\\&\leq\left\|\mu_{ij,t}-\mu\right\|^{2}+2\sigma_{t}^{2}\gamma_{t}^{2}\left\|\sum_{s \in \mathcal S_{ij,t}} \left[g^{i,s}_{j}\left(x_{ij,t}\right)\right]_{+}\right\|^{2}
\\&\quad+2\gamma_{t}^{2}\bigg\|\widetilde{z}_{ij,t}\bigg\|^{2}+2\sum_{s \in \mathcal S_{ij,t}}\left(\left[g^{i,s}_{j}\left(x_{ij,t}\right)\right]_{+}\right)^\top\mu
\\&\quad+2\gamma_{t}\left\|\mu_{ij,t}-\bar{\mu}_{t}\right\|\left\|\widetilde{z}_{ij,t}\right\|+2\gamma_{t}\left<\bar{\mu}_{t}-\mu,\widetilde{z}_{ij,t}\right>,
\end{align*}
where the last inequality is due to $0<\gamma_{t}\sigma_{t}\leq 1$. 

Summing the above terms over $t=1,\cdots,T$, one obtains 
\begin{equation}\label{eq.lemm_32}
\begin{aligned}
&\sum_{t=1}^{T}\left\|\mu_{ij,t+1}-\mu\right\|^{2}
\\&\leq\sum_{t=1}^{T}\left\|\mu_{ij,t}-\mu\right\|^{2}+2\sum_{t=1}^{T}\sum_{s \in \mathcal S_{ij,t}}\left(\left[g^{i,s}_{j}\left(x_{ij,t}\right)\right]_{+}\right)^\top\mu
\\&\quad+20\sum_{t=2}^{T}\frac{\sqrt{n}\sigma_{t}\gamma_{t-1}^{2}\left(t-1\right)K^{2}}{\beta_{t-1}}\sum_{s=0}^{t-2}\frac{\lambda^{s}\sigma_{t-s-1}\left(t-s-1\right)}{\Delta_{t-s-2}}
\\&\quad+8\sum_{t=2}^{T}\frac{\sigma_{t}^{2}\gamma_{t-1}^{4}\left(t-1\right)^{2}K^{2}}{\beta_{t-1}^{2}}+2\sum_{t=1}^{T}\sigma_{t}^{2}\gamma_{t}^{2}K^{2}\left|\mathcal S_{ij,t}\right|^{2}
\\&\quad+2\sum_{t=1}^{T}\gamma_{t}\left<\bar{\mu}_{t}-\mu,\widetilde{z}_{ij,t}\right>,
\end{aligned}
\end{equation}
where the inequality follows from the nonincreasing property of $\gamma_{t}$, Lemma \ref{lemm_1}, $\left\|\widetilde{z}_{ij,t}\right\|=0$ and $\left\|\mu_{1}-\bm1_{n}\otimes\bar{\mu}_{1}\right\|=0$. Note that $\mu_{ij,1}=\bm0_{m}$, thus 
\begin{equation}\label{eq.lemm_33}
\begin{aligned}
&\frac{1}{2}\sum_{t=1}^{T}\left(\left\|\mu_{ij,t}-\mu\right\|^{2}-\left\|\mu_{ij,t+1}-\mu\right\|^{2}\right)
\\&=\frac{1}{2}\left\|\mu_{ij,1}-\mu\right\|^{2}-\frac{1}{2}\left\|\mu_{ij,T+1}-\mu\right\|^{2}
\\&\leq\frac{1}{2}\left\|\mu\right\|^{2}.
\end{aligned}
\end{equation}

Furthermore, regrouping terms in (\ref{eq.lemm_32}), dividing $2$ on both sides and summing it over $i=1,\cdots,N$, $j=1,\cdots,n_{i}$, and using (\ref{eq.lemm_33}) and the fact that $\sum_{i=1}^{N}\sum_{j=1}^{n_{i}}\left<\bar{\mu}_{t}-\mu,\widetilde{z}_{ij,t}\right>=0$ for $t \in [T]$, the results can be verified. The proof is complete. 
\subsection{Proof of Lemma 4}
Recall from (\ref{CAl_2}) that
\begin{equation*}
x_{t+1}=\mathcal P_{\bm\Omega}\left(\widehat W\left(x_{t}-\alpha_{t}Y_{t}\right)\right),
\end{equation*}
where $Y_{t}=\widetilde F_{t}+\widehat \mu_{t} \widetilde G_{t}$. To simplify the notation, let $\widehat{x}_{ij,t+1}=\sum_{k=1}^{n_{i}}\left[W_{i}\right]_{jk}\left(x_{ik,t}-\alpha_{t}\sum_{s \in \mathcal S_{ik,t}} y^{s}_{ik,t}\right)$, $\theta_{ij,t+1}=x_{ij,t+1}-\widehat {x}_{ij,t+1}$ and $\Theta_{t+1}=col\left(\theta_{ij,t+1}\right)$. Then, the above recursion can be written as
\begin{equation}\label{eq.lemm_41}
x_{t+1}=\widehat W\left(x_{t}-\alpha_{t}Y_{t}\right)+\Theta_{t+1}.
\end{equation}

Using (\ref{eq.lemm_41}) repeatedly, it can be shown that, for $t\geq2$,
\begin{equation}\label{eq.lemm_42}
\begin{aligned}
x_{t}=&\widehat W^{t-1} x_{1}-\sum_{v=0}^{t-2}\alpha_{t-1-v}\widehat W^{v+1}Y_{t-1-v}
+\sum^{t-2}_{v=0}\widehat W^{v}\Theta_{t-v}.
\end{aligned}
\end{equation}

According to (\ref{eq.lemm_42}), the iteration of agent $j$ in cluster $i$ can be derived as
\begin{equation}\label{eq.lemm_43}
\begin{aligned}
x_{ij,t}=&\sum_{k=1}^{n_{i}}\left[W_{i}^{t-1}\right]_{jk}x_{ik,1}+\sum_{v=0}^{t-2}\sum_{k=1}^{n_{i}}\left[W_{i}^{v}\right]_{jk}\theta_{ik,t-v}
\\&-\sum_{v=0}^{t-2}\alpha_{t-1-v}\sum_{k=1}^{n_{i}}\left[W_{i}^{v+1}\right]_{jk}\sum_{s \in \mathcal S_{ik,t-1-v}}y^{s}_{ik,t-1-v}.
\end{aligned}
\end{equation}

Similarly, multipling both sides of (\ref{eq.lemm_42}) by the diagonal matrix $D=diag\left(\frac{1}{n_{i}}\bm 1^\top_{n_{i}}\right)_{i \in \mathcal V}$, it yields that, for $i \in \mathcal V$,
\begin{equation}\label{eq.lemm_44}
\begin{aligned}
\bar{x}_{i,t}=\bar{x}_{i,1}-\sum_{v=0}^{t-2}\alpha_{t-1-v}\bar{y}_{i,t-1-v}+\sum_{v=0}^{t-2}\bar{\theta}_{i,t-v},
\end{aligned}
\end{equation}
where $\bar{x}_{i,t}=\frac{1}{n_{i}}\sum_{k=1}^{n_{i}}x_{ik,t}$, $\bar{\theta}_{i,t-v}=\frac{1}{n_{i}}\sum_{k=1}^{n_{i}}\theta_{ik,t-v}$ and $\bar{y}_{i,t-1-v}=\frac{1}{n_{i}}\sum_{k=1}^{n_{i}}\sum_{s \in \mathcal S_{ik,t-1-v}}y^{s}_{ik,t-1-v}$. Hence, by combining (\ref{eq.lemm_43}) and (\ref{eq.lemm_44}), one obtains 
\begin{equation}\label{eq.lemm_45}
\begin{aligned}
&\left\|x_{ij,t}-\bar{x}_{i,t}\right\|
\\&\leq\sum_{k=1}^{n_{i}}\Delta W_{ij,k}(t-1)\left\|x_{ik,1}\right\|+\sum_{v=0}^{t-2}\sum_{k=1}^{n_{i}}\Delta W_{ij,k}(v)\left\|\theta_{ik,t-v}\right\|
\\&\quad+\sum_{v=0}^{t-2}\alpha_{t-1-v}\sum_{k=1}^{n_{i}}\Delta W_{ij,k}(v+1)\sum_{s \in \mathcal S_{ik,t-1-v}}\left\|y^{s}_{ik,t-1-v}\right\|,
\end{aligned}
\end{equation}
where $\Delta W_{ij,k}(t)=\left|\left[W_{i}^{t}\right]_{jk}-\frac{1}{n_{i}}\right|$.
Moreover, by the definition of $\theta_{ij,t+1}$, it follows that
\begin{equation}\label{eq.lemm_46}
\begin{aligned}
\sum_{j=1}^{n_{i}}\left\|\theta_{ij,t+1}\right\|
&=\sum_{j=1}^{n_{i}}\left\|x_{ij,t+1}-\sum_{k=1}^{n_{i}}\left[W_{i}\right]_{jk}x_{ik,t}\right\|
\\&\quad+\alpha_{t}\sum_{j=1}^{n_{i}}\sum_{k=1}^{n_{i}}\left[W_{i}\right]_{jk}\sum_{s \in \mathcal S_{ik,t}}\left\|y^{s}_{ik,t}\right\|
\\&\leq 2\alpha_{t}\sum_{j=1}^{n_{i}}\sum_{k=1}^{n_{i}}\left[W_{i}\right]_{jk}\sum_{s \in \mathcal S_{ik,t}}\left\|y^{s}_{ik,t}\right\|
\\&=2\alpha_{t}\sum_{k=1}^{n_{i}}\sum_{s \in \mathcal S_{ik,t}}\left\|y^{s}_{ik,t}\right\|.
\end{aligned}
\end{equation}

Combining the above relation with (\ref{eq.lemm_45}) and Lemma \ref{lemm_1}, one obtains
\begin{align*}
&\left\|x_{ij,t}-\bar{x}_{i,t}\right\|
\\&\leq\sum_{k=1}^{n_{i}}\psi^{t-4}\left\|x_{ik,1}\right\|+\sum_{v=0}^{t-2}\psi^{v-3}\sum_{k=1}^{n_{i}}\left\|\theta_{ik,t-v}\right\|
\\&\quad+\sum_{v=0}^{t-2}\alpha_{t-1-v}\psi^{v-2}\sum_{k=1}^{n_{i}}\sum_{s \in \mathcal S_{ik,t-1-v}}\left\|y^{s}_{ik,t-1-v}\right\|
\\&\leq n_{i}R\psi^{t-4}+3G\sum_{v=0}^{t-2}\alpha_{t-1-v}\psi^{v-3}\sum_{k=1}^{n_{i}}\left|\mathcal S_{ik,t-1-v}\right|\times
\\&\quad \left(1+\left\|\mu_{ik,t-1-v}\right\|\right)
\\&\leq nR\psi^{t-4}+3ncG\sum_{v=0}^{t-2}\psi^{v-3}(T^{-a_{1}}+KT^{1+a_{2}-a_{1}-a_{3}}),
\end{align*}
where $\psi \triangleq 1-\frac{a}{4n_{i}^{2}}< 1$, the first inequality is derived from Proposition 1 in \cite{Nedic_2010} and the last inequality is obtained by setting $\alpha_{t}$, $\beta_{t}$ and $\gamma_{t}$ as in Lemma \ref{lemma_6}. The proof is complete.

\subsection{Proof of Lemma 5}
Adding and subtracting two terms $\bar{x}_{i,t}$ and $x^{*}_{i,t}$ in $\left\|\bar{x}_{i,t+1}-x^{*}_{i,t+1}\right\|$ yields that 
\begin{equation}\label{lemm_31_1}
\begin{aligned}
\left\|\bar{x}_{i,t+1}-x^{*}_{i,t+1}\right\|^{2}&=\left\|\bar{x}_{i,t}-x^{*}_{i,t}\right\|^{2}-\left\|\bar{x}_{i,t+1}-\bar{x}_{i,t}\right\|^{2}
\\&\quad+\left<2\bar{x}_{i,t+1}-x^{*}_{i,t}-x^{*}_{i,t+1},x^{*}_{i,t}-x^{*}_{i,t+1}\right>
\\&\quad+2\left<\bar{x}_{i,t+1}-\bar{x}_{i,t},\bar{x}_{i,t+1}-x^{*}_{i,t}\right>.
\end{aligned}
\end{equation}

Since $\bar{x}_{i,t},x^{*}_{i,t} \in \Omega_{i}$ for $t \in [T]$, the third term on the right-hand side of (\ref{lemm_31_1}) can be bounded as
\begin{equation}\label{lemm_31_7}
\begin{aligned}
\left<2\bar{x}_{i,t+1}-x^{*}_{i,t}-x^{*}_{i,t+1},x^{*}_{i,t}-x^{*}_{i,t+1}\right>\leq 4R\left\|x^{*}_{i,t}-x^{*}_{i,t+1}\right\|.
\end{aligned}
\end{equation}

For the fourth term on the right-hand side of (\ref{lemm_31_1}), one has
\begin{equation}\label{lemm_31_2}
\begin{aligned}
&\left<\bar{x}_{i,t+1}-\bar{x}_{i,t},\bar{x}_{i,t+1}-x^{*}_{i,t}\right>
\\&=\left<-\alpha_{t}\bar{y}_{i,t}+\bar{\theta}_{i,t+1},\bar{x}_{i,t+1}-x^{*}_{i,t}\right>
\\&=\left<-\alpha_{t}\bar{y}_{i,t},\bar{x}_{i,t+1}-x_{ij,t+1}\right>
\\& \quad +\left<\bar{\theta}_{i,t+1},\bar{x}_{i,t+1}-x_{ij,t+1}\right>
\\&\quad+\left<-\alpha_{t}\bar{y}_{i,t}+\bar{\theta}_{i,t+1},x_{ij,t+1}-x^{*}_{i,t}\right>,
\end{aligned}
\end{equation}
where the first equality is derived by multiplying both sides of (\ref{eq.lemm_41}) with the diagonal matrix $D=diag\left(\frac{1}{n_{i}}\bm 1^\top_{n_{i}}\right)_{i \in \mathcal V}$.

For the last term on the right-hand side of (\ref{lemm_31_2}), it can be verified that
\begin{align*}
&\left<-\alpha_{t}\bar{y}_{i,t}+\bar{\theta}_{i,t+1},x_{ij,t+1}-x^{*}_{i,t}\right>
\\& \leq \alpha_{t}\left<\bar{y}_{i,t},x^{*}_{i,t}-x_{ij,t+1}\right>+\left<\bar{\theta}_{i,t+1},x_{ij,t+1}-\bar x_{i,t+1}\right>
\\&\quad+\frac{1}{n_{i}}\sum_{k=1}^{n_{i}}\left<{\theta}_{ik,t+1},\bar x_{i,t+1}-x_{ik,t+1}+x_{ik,t+1}-x^{*}_{i,t}\right>.
\end{align*}

By the fact that
\begin{equation*}
\left<\theta_{ik,t+1},x_{ik,t+1}-x^{*}_{i,t}\right>\leq0,
\end{equation*}
which always holds for $i \in \mathcal V$, $k \in [n_{i}]$ and $t \in \left[T\right]$ due to the definition of $\theta_{ik,t+1}$ and the property of the projection operator $(\mathcal {P}_{\chi}\left(x\right)-x)^\top({\mathcal P}_{\chi}\left(x\right)-y)\leq 0$ for all $x \in \mathbb R$ and $y \in \chi$, one can obtain
\begin{equation}\label{lemm_31_8}
\begin{aligned}
&\left<-\alpha_{t}\bar{y}_{i,t}+\bar{\theta}_{i,t+1},x_{ij,t+1}-x^{*}_{i,t}\right>
\\& \leq \alpha_{t}\left<\bar{y}_{i,t},x^{*}_{i,t}-\bar{x}_{i,t}\right>+\alpha_{t}\left<\bar{y}_{i,t},\bar{x}_{i,t}-\bar{x}_{i,t+1}\right>
\\&\quad+\alpha_{t}\left<\bar{y}_{i,t},\bar{x}_{i,t+1}-x_{ij,t+1}\right>+\left<\bar{\theta}_{i,t+1},x_{ij,t+1}-\bar x_{i,t+1}\right>
\\&\quad+\frac{1}{n_{i}}\sum_{k=1}^{n_{i}}\left<{\theta}_{ik,t+1},\bar x_{i,t+1}-x_{ik,t+1}\right>.
\end{aligned}
\end{equation}

Substituting (\ref{lemm_31_7}), (\ref{lemm_31_2}), (\ref{lemm_31_8}) into (\ref{lemm_31_1}), multiplying both sides by $\frac{n_{i}}{2\alpha_{t}}$, summing it from $t=1,\cdots,T$ and $i=1,\cdots,N$, and regrouping the terms, one obtains
\begin{equation}\label{lemm_31_50}
\begin{aligned}
&\sum_{t=1}^{T}\sum_{i=1}^{N}\frac{n_{i}}{2\alpha_{t}}\left\|\bar x_{i,t+1}-\bar x_{i,t}\right\|^{2}
\\&\leq\sum_{t=1}^{T}\sum_{i=1}^{N}\frac{n_{i}}{2\alpha_{t}}\left(\left\|\bar{x}_{i,t}-x^{*}_{i,t}\right\|^{2}-\left\|\bar{x}_{i,t+1}-x^{*}_{i,t+1}\right\|^{2}\right)
\\&\quad+\frac{2nR}{\alpha_{T}}\Phi_{T}+\sum_{t=1}^{T}\sum_{i=1}^{N}\sum_{k=1}^{n_{i}}\left<\sum_{s \in \mathcal S_{ik,t}}y^{s}_{ik,t},x^{*}_{i,t}-\bar x_{i,t}\right>
\\&\quad+\sum_{t=1}^{T}\sum_{i=1}^{N}\sum_{k=1}^{n_{i}}\left<\sum_{s \in \mathcal S_{ik,t}}y^{s}_{ik,t},\bar x_{i,t}-\bar x_{i,t+1}\right>
\\&\quad+\sum_{t=1}^{T}\frac{1}{\alpha_{t}}\sum_{i=1}^{N}\sum_{k=1}^{n_{i}}\left<{\theta}_{ik,t+1},\bar x_{i,t+1}-x_{ik,t+1}\right>.
\end{aligned}
\end{equation}

To simplify the notation, denote $\nabla f^{i,t}_{k}\left(\cdot\right)\triangleq \nabla_{x_{ik,t}} f^{i,t}_{k}\left(\cdot\right)$, $\sum_{t,i,k}(\cdot)\triangleq \sum_{t=1}^{T}\sum_{i=1}^{N}\sum_{k=1}^{n_{i}}(\cdot)$ and  $\sum_{i,k}(\cdot)\triangleq\sum_{i=1}^{N}\sum_{k=1}^{n_{i}}(\cdot)$.

Next, consider the upper bounds of the last three terms on the right-hand side of (\ref{lemm_31_50}). Firstly, the term $\sum_{t,i,k}\left<\sum_{s \in \mathcal S_{ik,t}}y^{s}_{ik,t},x^{*}_{i,t}-\bar x_{i,t}\right>$ will be bounded by the optimal conditions. For agent $k$ in cluster $i$, by the KKT condition in (\ref{eq.KKT_2}), there exists  $n_{ik,t} \in N_{\Omega_{i}}\left(x_{i,t}^{*}\right)$ such that $0=\nabla f^{i,t}_{k}\left(x_{i,t}^{*},x_{-i,t}^{*}\right)+\nabla g^{i,t}_{k}\left(x_{i,t}^{*}\right)^\top\mu^{*}_{t}+n_{ik,t}+L_{i}[k,:]\lambda^{*}_{i,t}$,
where $L_{i}[k,:]$ represents the $k$-th row of matrix $L_{i}$. Consequently, one has 
\begin{equation}\label{lemm_31_10}
\begin{aligned}
&\sum_{t,i,k}\bigg<\sum_{s \in \mathcal S_{ik,t}}y^{s}_{ik,t},x^{*}_{i,t}-\bar x_{i,t}\bigg>
\\&\leq\sum_{t,i,k}\bigg<\sum_{s \in \mathcal S_{ik,t}}\nabla f^{i,s}_{k}(x_{ik,t},\bm x^{-i}_{ik,t})-\nabla f^{i,t}_{k}\left(\bar x_{t}\right),x^{*}_{i,t}-\bar x_{i,t}\bigg>
\\& \quad+\sum_{t,i,k}\bigg<\sum_{s \in \mathcal S_{ik,t}}(\mu_{ik,t})^\top\nabla \left[g^{i,s}_{k}(x_{ik,t})\right]_{+},x^{*}_{i,t}-\bar x_{i,t}\bigg>
\\&\quad -\sum_{t,i,k}\left<\nabla g^{i,t}_{k}\left(x_{i,t}^{*}\right)^\top\mu^{*}_{t},x^{*}_{i,t}-\bar x_{i,t}\right>
\\&\quad-\sum_{t,i,k}\left<n_{ik,t}+L_{i}[k,:]\lambda^{*}_{i,t},x^{*}_{i,t}-\bar x_{i,t}\right>
\\&\quad -\sum_{t,i,k}\left<\nabla f^{i,t}_{k}\left( x^{*}_{t}\right)-\nabla f^{i,t}_{k}\left(\bar x_{t}\right),x^{*}_{i,t}-\bar x_{i,t}\right>,
\end{aligned}
\end{equation}
where $\nabla f^{i,t}_{k}\left( x^{*}_{t}\right)\triangleq\nabla f^{i,t}_{k}\left(x^{*}_{i,t},x^{*}_{-i,t}\right)$, $\nabla f^{i,t}_{k}\left(\bar x_{t}\right)\triangleq\nabla f^{i,t}_{k}\left( \bar x_{i,t}, \bar x_{-i,t}\right)$ and $\bar x_{-i,t}=col(\bm 1_{n_{1}} \otimes \bar x_{1,t},\cdots,\bm 1_{n_{i-1}} \otimes \bar x_{i-1,t},\bm 1_{n_{i+1}} \otimes \bar x_{i+1,t},\cdots,\bm 1_{n_{N}} \otimes \bar x_{N,t})$.

For the first term on the right-hand side of (\ref{lemm_31_10}), one has
\begin{equation}\label{lemm_31_80}
\begin{aligned}
&\sum_{t,i,k}\bigg<\sum_{s \in \mathcal S_{ik,t}}\nabla f^{i,s}_{k}(x_{ik,t},\bm x^{-i}_{ik,t})-\nabla f^{i,t}_{k}\left(\bar x_{t}\right),x^{*}_{i,t}-\bar x_{i,t}\bigg>
\\&=\sum_{t,i,k}\bigg<\sum_{s \in \mathcal S_{ik,t}}(\nabla f^{i,s}_{k}(x_{ik,t},\bm x^{-i}_{ik,t})-\nabla f^{i,s}_{k}(\bar x_{s})),x^{*}_{i,t}-\bar x_{i,t}\bigg>
\\&\quad +\sum_{t,i,k}\bigg<\sum_{s \in \mathcal S_{ik,t}}\nabla f^{i,s}_{k}(\bar x_{s})-\nabla f^{i,t}_{k}\left(\bar x_{t}\right),x^{*}_{i,t}-\bar x_{i,t}\bigg>.
\end{aligned}
\end{equation}

By the boundness of $x^{*}_{i,t}$ and $\bar x_{i,t}$ and Assumption \ref{assp.three}, the first term on the right-hand side of (\ref{lemm_31_80}) can be bounded as
\begin{align*}
&\sum_{t,i,k}\bigg<\sum_{s \in \mathcal S_{ik,t}}(\nabla f^{i,s}_{k}(x_{ik,t},\bm x^{-i}_{ik,t})-\nabla f^{i,s}_{k}(\bar x_{s})),x^{*}_{i,t}-\bar x_{i,t}\bigg>
\\&\leq 2R\sum_{t,i,k}\sum_{s \in \mathcal S_{ik,t}}\Big\|\nabla f^{i,s}_{k}(x_{ik,t},\bm x^{-i}_{ik,t})-\nabla f^{i,s}_{k}\left(\bar x_{s}\right)\Big\|
\\&\leq 2R\Big(\sum_{t,i,k}\sum_{s \in \mathcal S_{ik,t}}\Big\|\nabla f^{i,s}_{k}(x_{ik,t},\bm x^{-i}_{ik,t})-\nabla f^{i,s}_{k}(x_{ik,s},\bm x^{-i}_{ik,s})\Big\|
\\&\quad +\sum_{t,i,k}\sum_{s \in \mathcal S_{ik,t}}\Big\|\nabla f^{i,s}_{k}(x_{ik,s},\bm x^{-i}_{ik,s})-\nabla f^{i,s}_{k}\left(\bar x_{s}\right)\Big\|\Big)
\\&\leq 2Rl\sum_{t,i,k}\sum_{s \in \mathcal S_{ik,t}}\left(\left\|x_{ik,t}-x_{ik,s}\right\|+\left\|\bm x^{-i}_{ik,t}-\bm x^{-i}_{ik,s}\right\|\right)
\\&\quad +2R\sum_{t,i,k}\sum_{s \in \mathcal S_{ik,t}}\left\|\nabla f^{i,s}_{k}(x_{ik,s},\bm x^{-i}_{ik,s})-\nabla f^{i,s}_{k}(\bar x_{i,s},\bm x^{-i}_{ik,s})\right\|
\\&\quad +2R\sum_{t,i,k}\sum_{s \in \mathcal S_{ik,t}}\left\|\nabla f^{i,s}_{k}(\bar x_{i,s},\bm x^{-i}_{ik,s})-\nabla f^{i,s}_{k}(\bar x_{i,s},x_{-i,s})\right\|
\\& \quad +2R\sum_{t,i,k}\sum_{s \in \mathcal S_{ik,t}}\left\|\nabla f^{i,s}_{k}(\bar x_{i,s},x_{-i,s})-\nabla f^{i,s}_{k}(\bar x_{s})\right\|.
\end{align*}

Thus, one has
\begin{equation}\label{lemm_31_81}
\begin{aligned}
&\sum_{t,i,k}\bigg<\sum_{s \in \mathcal S_{ik,t}}(\nabla f^{i,s}_{k}(x_{ik,t},\bm x^{-i}_{ik,t})-\nabla f^{i,s}_{k}(\bar x_{s})),x^{*}_{i,t}-\bar x_{i,t}\bigg>
\\&\leq 2lR(2+c)\sum_{t,i,k}\left(\left\|x_{ik,t}-\bar x_{i,t}\right\|+\left\|x_{-i,t}-\bar x_{-i,t}\right\|\right)
\\&\quad+2lR(2+c)\sum_{t,i,k}\left\|\bm x^{-i}_{ik,t}-x_{-i,t}\right\|
\\&\quad+2lR\sum_{t,i,k}\sum_{s \in \mathcal S_{ik,t}\setminus\left\{t\right\}}\left\|\bar x_{i,t}-\bar x_{i,s}\right\|
\\&\quad +2lR\sum_{t,i,k}\sum_{s \in \mathcal S_{ik,t}\setminus\left\{t\right\}}\left\|\bar x_{-i,t}-\bar x_{-i,s}\right\|.
\end{aligned}
\end{equation}

For the second term on the right-hand side of (\ref{lemm_31_80}), by the definition of $\mathcal U_{ik,t}$ and $\mathcal S_{ik,t}$, it can be verified that
\begin{align*}
&\sum_{t,i,k}\bigg<\sum_{s \in \mathcal S_{ik,t}}\nabla f^{i,s}_{k}(\bar x_{s})-\nabla f^{i,t}_{k}\left(\bar x_{t}\right),x^{*}_{i,t}-\bar x_{i,t}\bigg>
\\&=\sum_{i,k}\sum_{t \in \mathcal U_{ik,T}}\bigg<\nabla f^{i,t}_{k}\left(\bar x_{t}\right),\bar x_{i,t}-x^{*}_{i,t}\bigg>
\\&\quad+\sum_{t,i,k}\sum_{s \in \mathcal S_{ik,t}\setminus\left\{t\right\}}\bigg<\nabla f^{i,s}_{k}(\bar x_{s}),x^{*}_{i,t}-\bar x_{i,t}+\bar x_{i,s}-x^{*}_{i,s}\bigg>
\\&\leq2R\sum_{i,k}\sum_{t \in \mathcal U_{ik,T}}\left\|\nabla f^{i,t}_{k}\left(\bar x_{t}\right)\right\|+G\sum_{t,i,k}\sum_{s \in \mathcal S_{ik,t}\setminus\left\{t\right\}}\left\|\bar x_{i,t}-\bar x_{i,s}\right\|
\\&\quad +G\sum_{t,i,k}\sum_{s \in \mathcal S_{ik,t}\setminus\left\{t\right\}}\sum_{h=s}^{t-1}\left\|x^{*}_{i,h+1}-x^{*}_{i,h}\right\|.
\end{align*}

Next, for $i \in \mathcal V$, $k \in [n_{i}]$, it holds that
\begin{align*}
&\sum_{t=1}^{T}\sum_{s \in \mathcal S_{ik,t}\setminus\left\{t\right\}}\sum_{h=s}^{t-1}\left\|x^{*}_{i,h+1}-x^{*}_{i,h}\right\|
\\&= \sum_{t=1}^{T}\left|[t]\cap(\cup_{h=t+1}^{T}\mathcal S_{ik,h})\right|\left\|x^{*}_{i,t+1}-x^{*}_{i,t}\right\|
\\&\leq\sum_{t=1}^{T}\left|[t]\cap([T]\setminus\cup_{h=1}^{t}\mathcal S_{ik,h})\right|\left\|x^{*}_{i,t+1}-x^{*}_{i,t}\right\|
\\&\leq \left|\mathcal U_{ik,[T]}\right|\sum_{t=1}^{T}\left\|x^{*}_{i,t+1}-x^{*}_{i,t}\right\|,
\end{align*}
where the first inequality follows from $\cup_{h=1}^{T}\mathcal S_{ik,h}\subseteq [T]$ and the second inequality is derived from the fact that $[t]\cap([T]\setminus\cup_{h=1}^{t}\mathcal S_{ik,h})=\mathcal U_{ik,t}$.

Therefore,
\begin{equation}\label{lemm_31_82}
\begin{aligned}
&\sum_{t,i,k}\bigg<\sum_{s \in \mathcal S_{ik,t}}\nabla f^{i,s}_{k}(\bar x_{s})-\nabla f^{i,t}_{k}\left(\bar x_{t}\right),x^{*}_{i,t}-\bar x_{i,t}\bigg>
\\&\leq2RG\sum_{i,k}\left|\mathcal U_{ik,T}\right|+G\sum_{t,i,k}\sum_{s \in \mathcal S_{ik,t}\setminus\left\{t\right\}}\left\|\bar x_{i,t}-\bar x_{i,s}\right\|
\\&\quad+nG\mathop{\max}\limits_{i \in \mathcal V, k \in [n_{i}]}\left|\mathcal U_{ik,[T]}\right|\Phi_{T}.
\end{aligned}
\end{equation}

Thus, substituting (\ref{lemm_31_81}) and (\ref{lemm_31_82}) into (\ref{lemm_31_80}) yields
\begin{align*}
&\sum_{t,i,k}\bigg<\sum_{s \in \mathcal S_{ik,t}}\nabla f^{i,s}_{k}(x_{ik,t},\bm x^{-i}_{ik,t})-\nabla f^{i,t}_{k}\left(\bar x_{t}\right),x^{*}_{i,t}-\bar x_{i,t}\bigg>
\\&\leq 2lR(2+c)\sum_{t,i,k}\big(\left\|x_{ik,t}-\bar x_{i,t}\right\|+\left\|x_{-i,t}-\bar x_{-i,t}\right\|
\\&\quad+\left\|\bm x^{-i}_{ik,t}-x_{-i,t}\right\|\big)+2lR\sum_{t,i,k}\sum_{s \in \mathcal S_{ik,t}\setminus\left\{t\right\}}\left\|\bar x_{-i,t}-\bar x_{-i,s}\right\|
\\&\quad +(2lR+G)\sum_{t,i,k}\sum_{s \in \mathcal S_{ik,t}\setminus\left\{t\right\}}\left\|\bar x_{i,t}-\bar x_{i,s}\right\|
\\&\quad+2RG\sum_{i,k}\left|\mathcal U_{ik,T}\right|+nG\mathop{\max}\limits_{i \in \mathcal V, k \in [n_{i}]}\left|\mathcal U_{ik,[T]}\right|\Phi_{T}.
\end{align*}

From (\ref{eq.lemm_41}), it follows that $\bar x_{i,t}=\bar x_{i,t-1}-\alpha_{t-1}\bar y_{i,t-1}+\bar \theta_{i,t}$ for $t \geq 2$. Thus, for $t>s$, one has
\begin{align*}
\bar x_{i,t}=\bar x_{i,s}-\sum_{k=s+1}^{t}(\alpha_{k-1}\bar y_{i,k-1}-\bar \theta_{i,k}).
\end{align*}

Combining the above relation with (\ref{eq.lemm_46}), it can be easily verified that
\begin{align*}
&\sum_{t,i,k}\sum_{s \in \mathcal S_{ik,t}\setminus\left\{t\right\}}\left\|\bar x_{i,t}-\bar x_{i,s}\right\|
\\&\leq \sum_{t,i,k}\sum_{s \in \mathcal S_{ik,t}\setminus\left\{t\right\}}\sum_{k=s+1}^{t}\left(\alpha_{k-1}\left\|\bar y_{i,k-1}\right\|+\left\|\bar \theta_{i,k}\right\|\right)
\\& \leq 3cG\sum_{t,i,k}\sum_{s \in \mathcal S_{ik,t}\setminus\left\{t\right\}}\sum_{k=s+1}^{t}\frac{\alpha_{k-1}}{n_{i}}\sum_{j=1}^{n_{i}}(1+\left\|\mu_{ij,k-1}\right\|)
\\& \leq 3cG\sum_{t,i,k}\sum_{s \in \mathcal S_{ik,t}\setminus\left\{t\right\}}(t-s)(T^{-a_{1}}+KT^{1+a_{2}-a_{1}-a_{3}})
\\& < 3cG(T^{-a_{1}}+KT^{1+a_{2}-a_{1}-a_{3}})\sum_{t,i,k}\sum_{s \in \mathcal S_{ik,t}\setminus\left\{t\right\}}\tau_{ik,s},
\end{align*}
where the last inequality follows from the definition of $\mathcal S_{ij,t}$. 

By Assumption \ref{assp.ten}, one obtains
\begin{align*}
&\sum_{t,i,k}\sum_{s \in \mathcal S_{ik,t}\setminus\left\{t\right\}}\left\|\bar x_{i,t}-\bar x_{i,s}\right\|
\\& < 3cnG(T^{\tau-a_{1}}+KT^{1+\tau+a_{2}-a_{1}-a_{3}}).
\end{align*}

Following this, by Lemma \ref{lemma_2} and Lemma \ref{lemma_6}, one has
\begin{align*}
&\sum_{t,i,k}\left\|\bm x^{-i}_{ik,t}-x_{-i,t}\right\|
\\&\leq \sum_{i,k}\sum_{p\in\mathcal V\setminus \left\{i\right\}}\sum_{q=1}^{n_{p}}\sum_{t=1}^{T}\left\|\bm x_{ik,t}^{pq}-x_{pq,t}\right\|
\\&\leq \frac{n^{\frac{5}{2}}}{1-\lambda^{-}}(4R+2nb_{1})+\frac{n^{\frac{7}{2}}}{1-\lambda^{-}}(2b_{2}+cG)T^{1-a_{1}}
\\&\quad+\frac{n^{\frac{7}{2}}}{1-\lambda^{-}}(2b_{3}+cKG)T^{2+a_{2}-a_{1}-a_{3}}.
\end{align*}

Summarizing, one obtains 
\begin{equation}\label{lemm_31_14}
\begin{aligned}
&\sum_{t,i,k}\bigg<\sum_{s \in \mathcal S_{ik,t}}\nabla f^{i,s}_{k}(x_{ik,t},\bm x^{-i}_{ik,t})-f^{i,t}_{k}\left(\bar x_{t}\right),x^{*}_{i,t}-\bar x_{i,t}\bigg>
\\& \leq b_{4}+b_{5}T^{\tau-a_{1}}+b_{6}T^{1+\tau+a_{2}-a_{1}-a_{3}}+nG(2R+1)T^{u+\phi},
\end{aligned}
\end{equation}
where $b_{4}=2nlR(2+c)(b_{1}(1+n)+\frac{n^{\frac{3}{2}}}{1-\lambda^{-}}(4R+2nb_{1}))$, $b_{5}=2nlb_{2}R(2+c)(1+n)+2lR(2b_{2}+cG)(2+c)\frac{n^{\frac{7}{2}}}{1-\lambda^{-}}+6cnlRG(1+n)+3cnG^{2}$ and $b_{6}=2nlb_{3}R(2+c)(1+n)+2lR(2b_{3}+cKG)(2+c)\frac{n^{\frac{7}{2}}}{1-\lambda^{-}}+6cnlKRG(1+n)+3cnG^{2}K$.

For the second term and third term on the right-hand side of (\ref{lemm_31_10}), by the definition of $\nabla \left[g^{i,s}_{k}(x_{ik,t})\right]_{+}$ and the boundedness of $\left\|\nabla g^{i,s}_{k}(x_{ik,t})\right\|$, one obtains
\begin{equation}\label{lemm_31_11}
\begin{aligned}
& \sum_{t,i,k}\bigg<\sum_{s \in \mathcal S_{ik,t}}(\mu_{ik,t})^\top\nabla \left[g^{i,s}_{k}(x_{ik,t})\right]_{+},x^{*}_{i,t}-\bar x_{i,t}\bigg>
\\&-\sum_{t,i,k}\left<\nabla g^{i,t}_{k}\left(x_{i,t}^{*}\right)^\top\mu^{*}_{t},x^{*}_{i,t}-\bar x_{i,t}\right>
\\& \leq 2cRG\sum_{t,i,k}\left\|\mu_{ik,t}\right\|+\sum_{t,i,k}(g^{i,t}_{k}\left(\bar x_{i,t}\right)-g^{i,t}_{k}\left(x^{*}_{i,t}\right))^\top\mu^{*}_{t}
\\&\leq 2cRG\sum_{t,i,k}\left\|\mu_{ik,t}\right\|+\sum_{t,i,k}(g^{i,t}_{k}\left(\bar x_{i,t}\right)-g^{i,t}_{k}\left(x_{ik,t}\right))^\top\mu^{*}_{t}
\\&\quad +\sum_{t,i,k}\left(\left[g^{i,t}_{k}(x_{ik,t})\right]_{+}\right)^\top\mu^{*}_{t}
\\& \leq (2cKR+\vartheta b_{3})nGT^{2+a_{2}-a_{3}}+\vartheta nG(b_{1}+b_{2}T^{1-a_{1}})
\\&\quad +\vartheta\mathcal CV(T),
\end{aligned}
\end{equation}
where the first inequality follows from the convexity of $g^{i,t}_{k}$ and the last inequality results from Lemma $1$ in \cite{Nedic_2009}, in which there exists a constant $\vartheta>0$, such that $\left\|\mu_{t}^{*}\right\|\leq \vartheta$ holds for $t \in [T]$.

For the fourth term on the right-hand side of (\ref{lemm_31_10}), according to the definition of normal cone, one has that $n_{ik,t}\in N_{\Omega_{i}}(x^{*}_{i,t})$ if and only if $\left<n_{ik,t},x^{*}_{i,t}-y\right>\geq 0$ for all $y \in \Omega_{i}$. For any $i \in \mathcal V$, because $\bar x_{i,t}$ always lies in $\Omega_{i}$ and the sum of all elements in each row of matrix $L_{i}$ is $0$, it follows that
\begin{equation}\label{lemm_31_12}
\begin{aligned}
-\sum_{t,i,k}\left<n_{ik,t}+L_{i}[k,:]\lambda^{*}_{i,t},x^{*}_{i,t}-\bar x_{i,t}\right>\leq 0.
\end{aligned}
\end{equation}

Substituing (\ref{lemm_31_14}), (\ref{lemm_31_11}) and (\ref{lemm_31_12}) into (\ref{lemm_31_10}), one obtains
\begin{equation}\label{lemm_31_66}
\begin{aligned}
&\sum_{t,i,k}\bigg<\sum_{s \in \mathcal S_{ik,t}}y^{s}_{ik,t},x^{*}_{i,t}-\bar x_{i,t}\bigg>
\\& \leq b_{4}+n\vartheta b_{1}G+(b_{5}+n\vartheta b_{2}G)T^{\tau-a_{1}}+b_{6}T^{1+\tau+a_{2}-a_{1}-a_{3}}
\\&\quad+nG(2R+1)T^{u+\phi}+(2cKR+\vartheta b_{3})nGT^{2+a_{2}-a_{3}}
\\&\quad-\sum_{t,i,k}\left<\nabla f^{i,t}_{k}\left( x^{*}_{t}\right)-\nabla f^{i,t}_{k}\left(\bar x_{t}\right),x^{*}_{i,t}-\bar x_{i,t}\right>+\vartheta\mathcal CV(T).
\end{aligned}
\end{equation}

Secondly, consider the upper bound of the term $\sum_{t,i,k}\left<\sum_{s \in \mathcal S_{ik,t}}y^{s}_{ik,t},\bar x_{i,t}-\bar{x}_{i,t+1}\right>$ on the right-hand side of  (\ref{lemm_31_50}). By utilizing Young's inequality, one has
\begin{equation}\label{lemm_31_6}
\begin{aligned}
&\sum_{t,i,k}\left<\sum_{s \in \mathcal S_{ik,t}}y^{s}_{ik,t},\bar x_{i,t}-\bar x_{i,t+1}\right>
\\& \leq \frac{cG^{2}}{2}\sum_{t,i,k}\alpha_{t}(1+\left\|\mu_{ik,t}\right\|)^{2}+\sum_{t=1}^{T}\sum_{i=1}^{N}\frac{n_{i}}{2\alpha_{t}}\left\|\bar x_{i,t}-\bar x_{i,t+1}\right\|^{2}
\\&\leq \frac{ncG^{2}}{2}T^{1-a_{1}}(1+2KT^{1+a_{2}-a_{3}}+K^{2}T^{2+2a_{2}-2a_{3}})
\\&\quad+\sum_{t=1}^{T}\sum_{i=1}^{N}\frac{n_{i}}{2\alpha_{t}}\left\|\bar x_{i,t}-\bar x_{i,t+1}\right\|^{2}.
\end{aligned}
\end{equation}

Thirdly, consider the upper bound of the term $\sum_{t=1}^{T}\frac{1}{\alpha_{t}}\sum_{i=1}^{N}\sum_{k=1}^{n_{i}}\left<{\theta}_{ik,t+1},\bar x_{i,t+1}-x_{ik,t+1}\right>$ on the right-hand side of (\ref{lemm_31_50}). From  (\ref{eq.lemm_46}), it follows that
\begin{equation}\label{lemm_31_67}
\begin{aligned}
&\sum_{t=1}^{T}\frac{1}{\alpha_{t}}\sum_{i=1}^{N}\sum_{k=1}^{n_{i}}\left<{\theta}_{ik,t+1},\bar x_{i,t+1}-x_{ik,t+1}\right>
\\&\leq\sum_{t=1}^{T}\mathop{\max}\limits_{i \in \mathcal V, k \in [n_{i}]}\left\|\bar x_{i,t+1}-x_{ik,t+1}\right\|\left(\frac{1}{\alpha_{t}}\sum_{i=1}^{N}\sum_{k=1}^{n_{i}}\left\|{\theta}_{ik,t+1}\right\|\right)
\\&\leq 2cnG\big((2R+b_{1})+b_{2}T^{1-a_{1}}+b_{3}KT^{3+2a_{2}-2a_{3}-a_{1}}
\\&\quad+(K(b_{1}+b_{2}+2R)+b_{3})T^{2+a_{2}-a_{1}-a_{3}}\big).
\end{aligned}
\end{equation}

Substituting inequalities (\ref{lemm_31_66}), (\ref{lemm_31_6}) and (\ref{lemm_31_67}) into (\ref{lemm_31_50}) and  regrouping the terms, one obtains
\begin{equation}\label{lemm_31_68}
\begin{aligned}
&\sum_{t,i,k}\left<\nabla f^{i,t}_{k}\left( x^{*}_{t}\right)-\nabla f^{i,t}_{k}\left(\bar x_{t}\right),x^{*}_{i,t}-\bar x_{i,t}\right>
\\& \leq \sum_{t=1}^{T}\sum_{i=1}^{N}\frac{n_{i}}{2\alpha_{t}}(\left\|\bar{x}_{i,t}-x^{*}_{i,t}\right\|^{2}-\left\|\bar{x}_{i,t+1}-x^{*}_{i,t+1}\right\|^{2})
\\& \quad +\frac{2nR}{\alpha_{T}}\Phi_{T}+\vartheta\mathcal CV(T)+b_{7}+b_{8}T^{\tau-a_{1}}+b_{9}T^{1+\tau+a_{2}-a_{1}-a_{3}}
\\&\quad +b_{10}T^{u+\phi}+b_{11}T^{2+a_{2}-a_{3}}+b_{12}T^{3+2a_{2}-2a_{3}-a_{1}},
\end{aligned}
\end{equation}
where $b_{7}=b_{4}+2cnG(2R+b_{1})+n\vartheta b_{1}G$, $b_{8}=b_{5}+2cnGb_{2}+n\vartheta b_{2}G+0.5ncG^{2}$, $b_{9}=b_{6}+2cnG(K(b_{1}+b_{2}+2R)+b_{3})+ncG^{2}K$, $b_{10}=nG(2R+1)$, $b_{11}=(2cKR+\vartheta b_{3})nG$ and $b_{12}=2cnb_{3}GK+0.5ncG^{2}K^{2}$.

Note that
\begin{align*}
&\sum_{t=1}^{T}\sum_{i=1}^{N}\frac{n_{i}}{2\alpha_{t}}(\left\|\bar{x}_{i,t}-x^{*}_{i,t}\right\|^{2}-\left\|\bar{x}_{i,t+1}-x^{*}_{i,t+1}\right\|^{2})
\\&\leq \frac{T^{a_{1}}}{2}\sum_{i=1}^{N}n_{i}\left\|\bar{x}_{i,1}-x^{*}_{i,1}\right\|^{2}
\\&\leq 2nR^{2}T^{a_{1}}.
\end{align*} 
Then, by $\sum_{t=1}^{T}\left\|\bar x_{i,t}-x^{*}_{i,t}\right\|\leq \sqrt{T\sum_{t=1}^{T}\left\|\bar x_{t}-x^{*}_{t}\right\|^{2}}$ , Assumption \ref{assp.two} and dividing $\sigma$ on both sides of (\ref{lemm_31_68}), it follows that
\begin{align*}
&\sum_{t=1}^{T}\left\|\bar x_{i,t}-x^{*}_{i,t}\right\|
\\&\leq \sqrt{\frac{b_{10}}{\sigma}}T^{\frac{1+u+\phi}{2}}+\left(\sqrt{\frac{b_{7}}{\sigma}}+\sqrt{\frac{2nR^{2}}{\sigma}}+\sqrt{\frac{2nR}{\sigma}}\right)T^{\frac{1+a_{1}+\phi}{2}}
\\&\quad+\sqrt{\frac{b_{8}}{\sigma}}T^{\frac{1+\tau-a_{1}}{2}}+\sqrt{\frac{b_{11}}{\sigma}}T^{\frac{3+a_{2}-a_{3}}{2}}+\sqrt{\frac{b_{12}}{\sigma}}T^{2+\frac{2a_{2}-a_{1}-2a_{3}}{2}}
\\&\quad+\sqrt{\frac{b_{9}}{\sigma}}T^{\frac{2+\tau+a_{2}-a_{1}-a_{3}}{2}}+\sqrt{\frac{\vartheta\mathcal CV(T)T}{\sigma}}.
\end{align*}

The proof is complete.

\subsection{Proof of Theorem 1}
It is evident that the stepsizes $\alpha_{t}$, $\beta_{t}$ and $\gamma_{t}$ setting in Theorem \ref{thm1} satisfy Assumption \ref{assp.seven}. Hence, for $t \geq 1$, by substituting $\beta_{t}$ and $\gamma_{t}$ into Lemma \ref{lemm_3}, one obtains 
\begin{align*}
&-\sum_{t=1}^{T}\sum_{i=1}^{N}\sum_{j=1}^{n_{i}}\left(\left[g^{i,t}_{j}\left(x_{ij,t}\right)\right]_{+}\right)^\top\mu-\frac{n}{2}\left\|\mu\right\|^{2}
\\&\leq-\sum_{t=1}^{T}\sum_{i=1}^{N}\sum_{j=1}^{n_{i}}\sum_{s \in \mathcal S_{ij,t}}\left(\left[g^{i,s}_{j}\left(x_{ij,t}\right)\right]_{+}\right)^\top\mu-\frac{n}{2}\left\|\mu\right\|^{2}
\\&\leq b_{13}{T^{1-2a_{3}}}+b_{14}T^{2+2a_{2}-4a_{3}}+b_{15}T^{2+2a_{2}-3a_{3}},
\end{align*}
where $b_{13}=ns^{2}K^{2}$, $b_{14}=\frac{64}{15}nK^{2}$ and $b_{15}=\frac{40n^{\frac{3}{2}}K^{2}}{3\left(1-\lambda\right)}$.

Submitting $\mu = -\frac{\sum_{t=1}^{T}\sum_{i=1}^{N}\sum_{j=1}^{n_{i}}\left[g^{i,t}_{j}\left(x_{ij,t}\right)\right]_{+}}{n}$ into the above inequality yields
\begin{align*}
\mathcal CV(T)&=\left\|\sum_{t=1}^{T}\sum_{i=1}^{N}\sum_{j=1}^{n_{i}}\left[g^{i,t}_{j}(x_{ij,t})\right]_{+}\right\|
\\&\leq \sqrt{2n}\left(\sqrt{b_{13}}{T^{\frac{1}{2}-a_{3}}}+(\sqrt{b_{14}}+\sqrt{b_{15}})T^{1+a_{2}-\frac{3}{2}a_{3}}\right),
\end{align*}
Thus, one arrives at the second result.
By utilizing the convexity of cost functions and combining Lemma \ref{lemma_6} and Lemma \ref{lemma_7}, one obtains 
\begin{align*}
\mathcal R(T)&=\sum_{t=1}^{T}\sum_{i=1}^{N}\sum_{j=1}^{n_{i}}\left(f^{i,t}_{j}\left(x_{ij,t},x_{-i,t}^{*}\right)-f^{i,t}_{j}\left(x_{i,t}^{*},x_{-i,t}^{*}\right)\right)
\\&\leq G\sum_{i=1}^{N}\sum_{j=1}^{n_{i}}(\sum_{t=1}^{T}\left\|x_{ij,t}-\bar x_{i,t}\right\|+\sum_{t=1}^{T}\left\|\bar x_{i,t}-x_{i,t}^{*}\right\|)
\\&\leq nG(b_{1}+b_{2}T^{1-a_{1}}+b_{3}T^{2+a_{2}-a_{1}-a_{3}}+b_{16}T^{\frac{1+u+\phi}{2}}
\\&\quad+b_{17}T^{\frac{1+a_{1}+\phi}{2}}+b_{18}T^{\frac{1+\tau-a_{1}}{2}}+b_{19}T^{\frac{3+a_{2}-a_{3}}{2}}
\\&\quad+b_{20}T^{2+\frac{2a_{2}-a_{1}-2a_{3}}{2}}+b_{21}T^{\frac{2+\tau+a_{2}-a_{1}-a_{3}}{2}}),
\end{align*}
where $b_{16}=\sqrt{\frac{b_{10}}{\sigma}}$, $b_{17}=\sqrt{\frac{b_{7}}{\sigma}}+\sqrt{\frac{2nR^{2}}{\sigma}}+\sqrt{\frac{2nR}{\sigma}}$, $b_{18}=\sqrt{\frac{b_{8}}{\sigma}}+\sqrt{\frac{\vartheta\sqrt{2nb_{13}}}{\sigma}}$, $b_{19}=\sqrt{\frac{b_{11}}{\sigma}}+\sqrt{\frac{\vartheta\sqrt{2n}(\sqrt{b_{14}}+\sqrt{b_{15}})}{\sigma}}$, $b_{20}=\sqrt{\frac{b_{12}}{\sigma}}$, $b_{21}=\sqrt{\frac{b_{9}}{\sigma}}$. The first result is obtained.
 The proof is complete.
\end{appendix}

\begin{thebibliography}{10}
\providecommand{\url}[1]{#1}
\csname url@samestyle\endcsname
\providecommand{\newblock}{\relax}
\providecommand{\bibinfo}[2]{#2}
\providecommand{\BIBentrySTDinterwordspacing}{\spaceskip=0pt\relax}
\providecommand{\BIBentryALTinterwordstretchfactor}{4}
\providecommand{\BIBentryALTinterwordspacing}{\spaceskip=\fontdimen2\font plus
\BIBentryALTinterwordstretchfactor\fontdimen3\font minus
\fontdimen4\font\relax}
\providecommand{\BIBforeignlanguage}[2]{{%
\expandafter\ifx\csname l@#1\endcsname\relax
\typeout{** WARNING: IEEEtran.bst: No hyphenation pattern has been}%
\typeout{** loaded for the language `#1'. Using the pattern for}%
\typeout{** the default language instead.}%
\else
\language=\csname l@#1\endcsname
\fi#2}}
\providecommand{\BIBdecl}{\relax}
\BIBdecl

\bibitem{Zeng_2019}
X.~Zeng, J.~Chen, S.~Liang, and Y.~Hong, ``Generalized Nash equilibrium seeking strategy for distributed nonsmooth multi-cluster game,'' \emph{Automatica}, vol.~103, pp. 20--26, May. 2019.

\bibitem{Shehory_1998}
O.~Shehory and S.~Kraus, ``Methods for task allocation via agent coalition formation,'' \emph{Artif. Intell.}, vol.~101, nos. 1--2, pp. 165--200, 1998.

\bibitem{Peng_2009}
T.~-J.~-A. Peng and M.~Bourne, ``The coexistence of competition and cooperation between networks: Implications from two taiwanese healthcare networks,'' \emph{Brit. J. Manage.}, vol.~20, no.~3, pp. 377--400, Sep. 2009.

\bibitem{Zhou_2023}
J.~Zhou, Y.~Lv, G.~Wen, J.~L\"{u}, and D.~Zheng, ``Distributed Nash equilibrium seeking in consistency-constrained multicoalition games,'' \emph{IEEE Trans. Cybern.}, vol.~53, no.~6, pp. 3675--3687, Jun. 2023.

\bibitem{Yi_2021}
X.~Yi, X.~Li, T.~Yang, L.~Xie, T.~Chai, and K.~H. Johansson, ``Distributed bandit online convex optimization with time-varying coupled inequality constraints,'' \emph{IEEE Trans. Autom. Control}, vol.~66, no.~10, pp. 4620--4635, Oct. 2021. 

\bibitem{Shahrampour_2018}
S.~Shahrampour and A.~Jadbabaie, ``Distributed online optimization in dynamic environments using mirror descent,'' \emph{IEEE Trans. Autom. Control}, vol.~63, no.~3, pp. 714--725, Mar. 2018.

\bibitem{Yi_2020}
X.~Yi, X.~Li, L.~Xie, and K.~H. Johansson, ``Distributed online convex optimization with time-varying coupled inequality constraints,'' \emph{IEEE Trans. Signal Process.}, vol.~68, pp. 731--746, Jan. 2020.


\bibitem{Lu_2021}
K.~Lu, H.~Li, and L.~Wang, ``Online distributed algorithms for seeking generalized Nash equilibria in dynamic environments,'' \emph{IEEE Trans. Autom. Control}, vol.~66, no.~5, pp. 2289--2296, May. 2021.

\bibitem{Meng_2021}
M.~Meng, X.~Li, Y.~Hong, J.~Chen, and L.~Wang, ``Decentralized online learning for noncooperative games in dynamic environments,'' \emph{arXiv preprint arXiv:2105.06200}, 2021.

\bibitem{Hsieh_2021}
Y.-G.~Hsieh, F.~Iutzeler, J.~Malick, and P.~Mertikopoulos, ``Multi-agent online optimization with delays: Asynchronicity, adaptivity, and optimism,'' \emph{J. Mach. Learn. Res.}, vol.~23, no.~78, pp.
1--49, 2021.

\bibitem{Cao_2021}
X.~Cao, J.~Zhang, and H.~V. Poor, ``Constrained online convex optimization with feedback delays,'' \emph{IEEE Trans. Autom. Control}, vol.~66, no.~11, pp. 5049--5064, Nov. 2021.


\bibitem{Meng_2022}
M.~Meng, X.~Li, and J.~Chen, ``Decentralized Nash equilibria learning for online game with bandit feedback,'' \emph{IEEE Trans. Autom. Control}, pp. 1--8, Dec. 2023.

\bibitem{Liu_2023}
P.~Liu, K.~Lu, F.~Xiao, B.~Wei, and Y.~Zheng, ``Online distributed learning for aggregative games with
feedback delays.'' \emph{IEEE Trans. Autom. Control}, vol.~68, no.~10, pp. 6385--6392, Oct. 2023.

\bibitem{Zhou_2022}
J.~Zhou, G.~Wen, Y.~Lv, T.~Yang and G.~Chen, ``Distributed resource allocation over multiple interacting coalitions: A game-theoretic approach,'' \emph{arXiv e-prints}, Jun. 2022.


\bibitem{Yu_2023}
R.~Yu, M.~Meng, and L.~Li, ``Distributed online learning for leaderless multi-cluster games in dynamic environments.'' \emph{IEEE Trans. Control. Netw. Syst.}, pp. 1--12, Dec. 2023. 


\bibitem{Ye_2017}
M.~Ye and G.~Hu, ``Distributed Nash equilibrium seeking by a consensus based approach,'' \emph{IEEE Trans. Autom. Control}, vol.~62, no.~9, pp. 4811--4818, Sep. 2017.

\bibitem{Yue_2022}
Y.~Chen, and P.~Yi. ``Generalized multi-cluster game under partial-decision information with applications to management of energy internet.'' \emph{J. Franklin. Inst.}, vol.~360, no.~5, pp. 3849--3878, 2023.

\bibitem{Auslender_2000}
A.~Auslender and M.~Teboulle, ``Lagrangian duality and related multiplier methods for variational inequality problems,'' \emph{SIAM J. Optim.}, vol.~10, no.~4, pp. 1097--1115, 2000.

\bibitem{Yi_2019}
P.~Yi and L.~Pavel, ``An operator splitting approach for distributed generalized Nash equilibria computation,'' \emph{Automatica}, vol.~102, pp. 111--121, Apr. 2019.

\bibitem{Tatarenko_2021}
T.~Tatarenko, W.~Shi, and A.~Nedi\'{c}, ``Geometric convergence of gradient play algorithms for distributed Nash
equilibrium seeking,'' \emph{IEEE Trans. Autom. Control}, vol.~66, no.~11, pp. 5342--5353, Nov. 2021.

\bibitem{Ye_2018}
M.~Ye, G.~Hu, and F.~Lewis, ``Nash equilibrium seeking for $N$-coalition non-cooperative games,'' \emph{Automatica}, vol.~95, pp. 266--272, Sep. 2018.

\bibitem{Ye_2019}
M.~Ye, G.~Hu, F.~Lewis, and L.~Xie, ``A unified strategy for solution seeking in graphical $N$-coalition noncooperative games,'' \emph{IEEE Trans. Autom. Control}, vol.~64, no.~11, pp. 4645--4652, Nov. 2019.


\bibitem{Koshal_2016}
J.~Koshal, A.~Nedi\'{c}, and U.~V.~Shanbhag, ``Distributed algorithms for
aggregative games on graphs,'' \emph{Oper. Res.}, vol.~64, no.~3, pp. 680--704, 2016.

\bibitem{Horn_2013}
R.~A. Horn and C.~R. Johnson, \emph{Matrix Analysis}. New York, NY, USA: Cambridge Univ. Press, 2013.

\bibitem{Hong_2006}
Y.~Hong, J.~Hu, and L.~Gao, ``Tracking control for multi-agent consensus with an active leader and variable topology,'' \emph{Automatica}, vol.~42, no.~7, pp. 1177--1182, Jul. 2006.

\bibitem{Nedic_2010}
A.~Nedi\'{c}, A.~Ozdaglar, and P.~A.~Parrilo, ``Constrained consensus and optimization in multi-agent networks,'' \emph{IEEE Trans. Autom. Control}, vol.~55, no.~4, pp. 922--938, Apr. 2010.

\bibitem{Nedic_2009}
A.~Nedi\'{c} and A.~Ozdaglar, ``Approximate primal solutions and rate analysis for dual subgradient methods,'' \emph{SIAM J. Optim.}, vol.~19, no.~4, pp. 1757--1780, Jan. 2009.
\end{thebibliography}

\end{document}